\setlist[enumerate,1]{label=\textup{(\arabic*)}}% ensure enumerates in theorems are upright
\renewcommand*{\PrintDOI}[1]{\href{http://dx.doi.org/\detokenize{#1}}{doi: \detokenize{#1}}}
\numberwithin{equation}{section}
\theoremstyle{plain}
\newtheorem{theorem}[equation]{Theorem}
\newtheorem{lemma}[equation]{Lemma}
\newtheorem{proposition}[equation]{Proposition}
\newtheorem{corollary}[equation]{Corollary}
\theoremstyle{definition}
\newtheorem{definition}[equation]{Definition}
\theoremstyle{remark}
\newtheorem{example}[equation]{Example}
\newcommand{\dvr}{V}% discrete valuation ring
\newcommand{\dvgen}{\pi}% uniformiser
\newcommand{\dvf}{K}% field of fractions of \dvr
\newcommand{\resf}{k}% residue field of \dvr
\newcommand*{\Z}{\mathbb Z}
\newcommand*{\N}{\mathbb N}
\newcommand*{\nb}{\nobreakdash}  % no break after this hyphen
\newcommand{\bdd}{\mathcal B}% bornology
\newcommand{\coma}{\widehat}% pi-adic completion
\newcommand{\comb}{\overbracket[.7pt][1.4pt]}% bornological completion
\newcommand*{\ling}[1]{#1_\mathrm{lg}}% linear growth bornology
\newcommand*{\torf}[1]{#1_\mathrm{tf}}% bornologically torsion-free image
\newcommand*{\comling}[1]{\comb{#1_\mathrm{lg}}}% linear growth bornology completion
\newcommand{\updagger}{{\textup{\tiny\!\! \textdagger}}}% upright dagger also in italics
\newcommand{\defeq}{\mathrel{:=}} % per Definition
\newcommand*{\into}{\rightarrowtail}
\newcommand*{\onto}{\twoheadrightarrow}
\DeclarePairedDelimiter{\floor}{\lfloor}{\rfloor}% floor function
\DeclarePairedDelimiter{\ceil}{\lceil}{\rceil}%ceiling function
\DeclarePairedDelimiter{\norm}{\lVert}{\rVert}% norm
\DeclarePairedDelimiter{\abs}{\lvert}{\rvert}% absolute value
\DeclarePairedDelimiterX{\setgiven}[2]{\{}{\}}{#1\,{:}\,\mathopen{}#2}% set given by
\DeclareMathOperator{\Endo}{End}
\newcommand*{\injto}{\hookrightarrow}
\begin{document}
\title[Dagger completions and bornological torsion-freeness]{Dagger completions and\\bornological torsion-freeness}

\author{Ralf Meyer}
\email{rmeyer2@uni-goettingen.de}

\author{Devarshi Mukherjee}
\email{devarshi.mukherjee@mathematik.uni-goettingen.de}

\address{Mathematisches Institut\\
  Georg-August Universit\"at G\"ottingen\\
  Bunsenstra\ss{}e 3--5\\
  37073 G\"ottingen\\
  Germany}

\begin{abstract}
  We define a dagger algebra as a bornological algebra over a discrete
  valuation ring with three properties that are typical of
  Monsky--Washnitzer algebras, namely, completeness, bornological
  torsion-freeness and a certain spectral radius condition.  We study
  inheritance properties of the three properties that define a dagger
  algebra.  We describe dagger completions of bornological
  algebras in general and compute some noncommutative examples.
\end{abstract}
\maketitle

\section{Introduction}

In~\cite{Monsky-Washnitzer:Formal}, Monsky and Washnitzer introduce a
cohomology theory for affine non-singular varieties defined over a
field~\(\resf\)
of nonzero characteristic.  Let~\(\dvr\)
be a discrete valuation ring such that the fraction field~\(\dvf\)
of~\(\dvr\)
has characteristic~\(0\).
Let \(\dvgen\in \dvr\)
be a uniformiser and let \(\resf=\dvr/\dvgen \dvr\)
be the residue field.  Monsky and Washnitzer lift the coordinate ring
of a smooth affine variety~\(X\)
over~\(\resf\)
to a smooth commutative algebra~\(A\)
over~\(\dvr\).
The dagger completion~\(A^\updagger\)
of~\(A\)
is a certain subalgebra of the \(\dvgen\)\nb-adic
completion of~\(A\).
If~\(A\)
is the polynomial algebra over~\(\dvr\),
then~\(A^\updagger\)
is the ring of overconvergent power series.  The Monsky--Washnitzer
cohomology is defined as the de Rham cohomology of the algebra
\(\dvf\otimes_\dvr A^\updagger\).

The dagger completion is interpreted
in~\cite{Cortinas-Cuntz-Meyer-Tamme:Nonarchimedean} in the setting of
bornological algebras, based on considerations about the joint
spectral radius of bounded subsets.  The main achievement
in~\cite{Cortinas-Cuntz-Meyer-Tamme:Nonarchimedean} is the
construction of a chain complex that computes the rigid cohomology of
the original variety~\(X\)
and that is strictly functorial.  In addition, this chain complex is
related to periodic cyclic homology.  Here we continue the study
of dagger completions.  We define dagger algebras by adding a
bornological torsion-freeness condition to the completeness and
spectral radius conditions already present
in~\cite{Cortinas-Cuntz-Meyer-Tamme:Nonarchimedean}.  We also show
that the category of dagger algebras is closed under extensions,
subalgebras, and certain quotients, by showing that all three
properties that define them are hereditary for these constructions.

The results in this article should help to reach the following
important goal: define an analytic cyclic cohomology theory for
algebras over~\(\resf\)
that specialises to Monsky--Washnitzer or rigid cohomology for the
coordinate rings of smooth affine varieties over~\(\resf\).
A general machinery for defining such cyclic cohomology theories is
developed in~\cite{Meyer:HLHA}.  It is based on a class of nilpotent
algebras, which must be closed under extensions.  This is why we are
particularly interested in properties hereditary for extensions.

If~\(S\)
is a bounded subset of a \(\dvf\)\nb-algebra~\(A\),
then its spectral radius \(\varrho(S)\in [0,\infty]\)
is defined in~\cite{Cortinas-Cuntz-Meyer-Tamme:Nonarchimedean}.
If~\(A\)
is a bornological \(\dvr\)\nb-algebra,
then only the inequalities \(\varrho(S) \le s\)
for \(s>1\)
make sense.  This suffices, however, to characterise the \emph{linear
  growth bornology} on a bornological \(\dvr\)\nb-algebra:
it is the smallest \(\dvr\)\nb-algebra
bornology with \(\varrho(S) \le 1\)
for all its bounded subsets~\(S\).
We call a bornological algebra~\(A\)
with this property \emph{semi-dagger} because this is the main feature
of dagger algebras.  Any bornological algebra~\(A\)
carries a smallest bornology with linear growth.  This defines a
semi-dagger algebra~\(\ling{A}\).
If~\(A\)
is a torsion-free, finitely generated, commutative \(\dvr\)\nb-algebra
with the fine bornology, then the bornological
completion~\(\comling{A}\)
of~\(\ling{A}\) is the Monsky--Washnitzer completion of~\(A\).

Any algebra over~\(\resf\)
is also an algebra over~\(\dvr\).
Equipped with the fine bornology, it is complete and semi-dagger.  We
prefer, however, not to call such algebras ``dagger algebras.''  The
feature of Monsky--Washnitzer algebras that they lack is
torsion-freeness.  The purely algebraic notion of torsion-freeness
does not work well for bornological algebras.  In particular, it is
unclear whether it is preserved by completions.  We call a
bornological \(\dvr\)\nb-module~\(A\)
\emph{bornologically torsion-free} if multiplication by~\(\dvgen\)
is a bornological isomorphism onto its image.  This notion has very
good formal properties: it is preserved by bornological completions
and linear growth bornologies and hereditary for subalgebras and
extensions.  So~\(\comling{A}\)
remains bornologically torsion-free if~\(A\)
is bornologically torsion-free.  The bornological version of
torsion-freeness coincides with the usual one for bornological
\(\dvr\)\nb-modules
with the fine bornology.  Thus~\(\comling{A}\)
is bornologically torsion-free if~\(A\)
is a torsion-free \(\dvr\)\nb-algebra with the fine bornology.

A bornological \(\dvr\)\nb-module~\(M\)
is bornologically torsion-free if and only if the canonical map
\(M\to \dvf\otimes_\dvr M\)
is a bornological embedding.  This property is very important.  On the
one hand, we must keep working with modules over~\(\dvr\)
in order to keep the original algebra over~\(\resf\)
in sight and because the linear growth bornology only makes sense for
algebras over~\(\dvr\).
On the other hand, we often need to pass to the \(\dvf\)\nb-vector
space \(\dvf\otimes_\dvr M\)
-- this is how de Rham cohomology is defined.  Bornological vector
spaces over~\(\dvf\)
have been used recently to do analytic geometry in
\cites{Bambozzi:Affinoid, Bambozzi-Ben-Bassat:Dagger,
  Bambozzi-Ben-Bassat-Kremnizer:Stein}.  The spectral radius of a
bounded subset of a bornological \(\dvr\)\nb-algebra~\(A\)
is defined in~\cite{Cortinas-Cuntz-Meyer-Tamme:Nonarchimedean} by
working in \(\dvf\otimes_\dvr A\),
which only works well if~\(A\)
is bornologically torsion-free.  Here we define a truncated spectral
radius in \([1,\infty]\)
without reference to \(\dvf\otimes_\dvr A\),
in order to define semi-dagger algebras independently of torsion
issues.

We prove that the properties of being complete, semi-dagger, or
bornologically torsion-free are hereditary for extensions.  Hence an
extension of dagger algebras is again a dagger algebra.

To illustrate our theory, we describe the dagger completions of monoid
algebras and crossed products.  Dagger completions of monoid algebras
are straightforward generalisations of Monsky--Washnitzer completions
of polynomial algebras.

We thank the anonymous referee for helpful comments to improve the
presentation in the paper.

\section{Basic notions}
\label{Review_bornologies}

In this section, we recall some basic notions on bornological modules
and bounded homomorphisms.
See~\cite{Cortinas-Cuntz-Meyer-Tamme:Nonarchimedean} for more details.
We also study the inheritance properties of separatedness and
completeness for submodules, quotients and extensions.

A \emph{bornology} on a set~\(X\)
is a collection~\(\bdd_X\)
of subsets of~\(X\),
called \emph{bounded} sets, such that all finite subsets are bounded
and subsets and finite unions of bounded subsets are bounded.
Let~\(\dvr\)
be a complete discrete valuation ring.  A \emph{bornological
  \(\dvr\)\nb-module}
is a \(\dvr\)\nb-module~\(M\)
with a bornology such that every bounded subset is contained in a
bounded \(\dvr\)\nb-submodule.
In particular, the \(\dvr\)\nb-submodule
generated by a bounded subset is again bounded.  We always
write~\(\bdd_M\) for the bornology on~\(M\).

Let \(M'\subseteq M\)
be a \(\dvr\)\nb-submodule.
The \emph{subspace bornology} on~\(M'\)
consists of all subsets of~\(M'\)
that are bounded in~\(M\).
The \emph{quotient bornology} on~\(M/M'\)
consists of all subsets of the form \(q(S)\)
with \(S\in\bdd_M\),
where \(q \colon M \to M/M'\)
is the canonical projection.  We always equip submodules and quotients
with these canonical bornologies.

Let~\(M\)
and~\(N\)
be two bornological \(\dvr\)\nb-modules.
A \(\dvr\)\nb-module
map \(f\colon M \to N\)
is \emph{bounded} if \(f(S)\in \bdd_N\)
for all \(S\in \bdd_M\).
Bornological \(\dvr\)\nb-modules
and bounded \(\dvr\)\nb-module
maps form an additive category.  The isomorphisms in this category are
called \emph{bornological isomorphisms}.  A bounded \(\dvr\)\nb-module
map \(f \colon M \to N\)
is a \emph{bornological embedding} if the induced map \(M \to f(M)\)
is a bornological isomorphism, where \(f(M)\subseteq N\)
carries the subspace bornology.  It is a \emph{bornological quotient
  map} if the induced map \(M/\ker f \to N\)
is a bornological isomorphism.  Equivalently, for each \(T\in\bdd_N\)
there is \(S\in\bdd_M\) with \(f(S) = T\).

An \emph{extension} of bornological \(\dvr\)\nb-modules
is a diagram of \(\dvr\)\nb-modules
\[
M' \xrightarrow{f} M \xrightarrow{g} M''
\]
that is algebraically exact and such that~\(f\)
is a bornological embedding and~\(g\)
a bornological quotient map.  Equivalently, \(g\)
is a cokernel of~\(f\)
and~\(f\)
a kernel of~\(g\)
in the additive category of bornological \(\dvr\)\nb-modules.
A \emph{split extension} is an extension with a bounded
\(\dvr\)\nb-linear
map \(s\colon M'' \to M\) such that \(g\circ s = \mathrm{id}_{M''}\).

Let~\(M\)
be a bornological \(\dvr\)\nb-module.
A sequence \((x_n)_{n\in\N}\)
in~\(M\)
\emph{converges} towards \(x\in M\)
if there are \(S\in\bdd_M\)
and a sequence \((\delta_n)_{n\in\N}\)
in~\(\dvr\)
with \(\lim {}\abs{\delta_n} = 0\)
and \(x_n-x \in \delta_n\cdot S\)
for all \(n\in\N\).
It is a \emph{Cauchy} sequence if there are \(S\in\bdd_M\)
and a sequence \((\delta_n)_{n\in\N}\)
in~\(\dvr\)
with \(\lim {}\abs{\delta_n} = 0\)
and \(x_n-x_m \in \delta_j\cdot S\)
for all \(n,m,j\in\N\)
with \(n,m\ge j\).
Since any bounded subset is contained in a bounded
\(\dvr\)\nb-submodule,
a sequence in~\(M\)
converges or is Cauchy if and only if it converges or is Cauchy in the
\(\dvgen\)\nb-adic
topology on some bounded \(\dvr\)\nb-submodule of~\(M\).

We call a subset~\(S\)
of~\(M\)
\emph{closed} if \(x\in S\)
for any sequence in~\(S\)
that converges in~\(M\)
to \(x\in M\).
These are the closed subsets of a topology on~\(M\).
Bounded maps preserve convergent sequences and Cauchy sequences.  Thus
pre-images of closed subsets under bounded maps remain closed.  That
is, bounded maps are continuous for these canonical topologies.

\subsection{Separated bornological modules}
\label{sec:separated}

We call~\(M\)
\emph{separated} if limits of convergent sequences in~\(M\)
are unique.  If~\(M\)
is not separated, then the constant sequence~\(0\)
has a non-zero limit.  Therefore, \(M\)
is separated if and only if \(\{0\}\subseteq M\)
is closed.  And~\(M\)
is separated if and only if any \(S\in\bdd_M\)
is contained in a \(\dvgen\)\nb-adically
separated bounded \(\dvr\)\nb-submodule.

\begin{lemma}
  \label{lem:separated_hereditary}
  Let \(M' \xrightarrow{f} M \xrightarrow{g} M''\)
  be an extension of bornological \(\dvr\)\nb-modules.
  \begin{enumerate}
  \item \label{lem:separated_hereditary_1}%
    If~\(M\) is separated, so is~\(M'\).
  \item \label{lem:separated_hereditary_2}%
    The quotient~\(M''\)
    is separated if and only if~\(f(M')\) is closed in~\(M\).
  \item \label{lem:separated_hereditary_3}%
    If \(M'\)
    and~\(M''\)
    are separated and~\(M''\)
    is torsion-free, then~\(M\) is separated.
  \end{enumerate}
\end{lemma}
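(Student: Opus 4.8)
The plan is to use the three criteria for separatedness recalled just above: a bornological \(\dvr\)\nb-module~\(N\) is separated iff \(\{0\}\subseteq N\) is closed, iff every bounded subset of~\(N\) lies in a \(\dvgen\)\nb-adically separated bounded \(\dvr\)\nb-submodule; and a sequence converges in~\(N\) iff it converges \(\dvgen\)\nb-adically inside some bounded \(\dvr\)\nb-submodule. Throughout I replace bounded subsets by the bounded submodules they generate, and I identify \(M'\) with the submodule \(f(M')\subseteq M\) carrying its subspace bornology and \(M''\) with \(M/f(M')\) carrying its quotient bornology. Parts (1) and (2) are soft consequences of continuity of~\(f\) and~\(g\); part (3) carries the real content.

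For part (1): a bounded submodule \(T\subseteq M'\) is bounded in~\(M\), hence lies in a \(\dvgen\)\nb-adically separated bounded submodule \(N\subseteq M\), and then \(N\cap M'\) is a bounded submodule of~\(M'\) containing~\(T\) that is \(\dvgen\)\nb-adically separated, being a submodule of~\(N\). For part (2): if \(M''\) is separated then \(\{0\}\subseteq M''\) is closed, so its pre-image \(f(M')\) under the continuous map~\(g\) is closed in~\(M\). Conversely, assume \(f(M')\) is closed and let the constant sequence~\(0\) converge in~\(M''\) to~\(w\), witnessed by a bounded submodule \(\bar S\subseteq M''\) and \(\delta_n\in\dvr\) with \(\abs{\delta_n}\to 0\) and \(w\in\delta_n\bar S\). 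Choose a bounded submodule \(S\subseteq M\) with \(g(S)=\bar S\); then \(w\in g(\delta_n S)\), so there are \(x_n\in\delta_n S\) with \(g(x_n)=w\). Fix \(w_0\in M\) with \(g(w_0)=w\); then \(x_n-w_0\in\ker g=f(M')\), while \(x_n\to 0\) in~\(M\) gives \(x_n-w_0\to -w_0\). Closedness of \(f(M')\) yields \(w_0\in f(M')\), hence \(w=g(w_0)=0\), so \(\{0\}\) is closed in~\(M''\).

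For part (3), assume \(M'\) and~\(M''\) separated and~\(M''\) torsion-free; I show \(\{0\}\subseteq M\) is closed. Let the constant sequence~\(0\) converge to \(x\in M\), witnessed by a bounded submodule \(S\subseteq M\) and \(\delta_n\) with \(\abs{\delta_n}\to 0\) and \(x\in\delta_n S\). For each \(m\in\N\), some~\(\delta_n\) lies in \(\dvgen^m\dvr\) because \(\abs{\delta_n}\to 0\), so \(x\in\dvgen^m S\); thus \(x\in\bigcap_{m\in\N}\dvgen^m S\). Since~\(g\) is continuous and~\(M''\) separated, \(g(x)=0\), so \(x=f(x')\) for a unique \(x'\in M'\). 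Writing \(x=\dvgen^m s_m\) with \(s_m\in S\), we get \(\dvgen^m g(s_m)=g(x)=0\), and torsion-freeness of~\(M''\) forces \(g(s_m)=0\), so \(s_m=f(s_m')\) for a unique \(s_m'\in M'\). Then \(f(x')=x=\dvgen^m f(s_m')=f(\dvgen^m s_m')\), and injectivity of~\(f\) gives \(x'=\dvgen^m s_m'\) for all~\(m\).

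To finish, the set \(\{s_m:m\in\N\}\subseteq S\) is bounded in~\(M\), so, since~\(f\) is a bornological embedding, \(\{s_m':m\in\N\}\) is bounded in~\(M'\) and generates a bounded submodule \(T'\subseteq M'\); hence \(x'\in\bigcap_{m\in\N}\dvgen^m T'\). As~\(M'\) is separated, \(T'\) lies in a \(\dvgen\)\nb-adically separated bounded submodule \(N'\subseteq M'\), whence \(x'\in\bigcap_m\dvgen^m N'=\{0\}\). Therefore \(x'=0\) and \(x=f(x')=0\), so \(\{0\}\) is closed in~\(M\). The main thing to watch is the boundedness of \(\{s_m'\}\): it is precisely the hypothesis that~\(f\) is a bornological embedding, rather than merely a bounded injection, that transports boundedness from the~\(s_m\) back to the~\(s_m'\); torsion-freeness of~\(M''\) is what makes the~\(s_m'\) exist at all, and separatedness of~\(M'\) then kills~\(x'\).
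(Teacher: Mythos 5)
Your proof is correct and follows essentially the same route as the paper's: in (2) you argue the converse directly where the paper argues the contrapositive, and in (3) you use torsion-freeness of \(M''\) to lift the \(s_m\) to \(M'\), the bornological embedding property of \(f\) to keep the lifts bounded, and separatedness of \(M'\) to kill \(x'\), exactly as in the paper (your reduction to powers of \(\dvgen\) and the \(\dvgen\)\nb-adic criterion is only a cosmetic variant of the paper's direct use of the null sequence \(\delta_n\)).
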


\begin{proof}
  Assertion~\ref{lem:separated_hereditary_1} is trivial.

  If \(M''\)
  is separated, then \(\{0\}\subseteq M''\)
  is closed.  Hence \(g^{-1}(\{0\}) = f(M')\)
  is closed in~\(M\).
  If~\(M''\)
  is not separated, then the constant sequence~\(0\)
  in~\(M''\)
  converges to some non-zero \(x''\in M''\).
  That is, there are a bounded subset \(S''\subseteq M''\)
  and a null sequence~\((\delta_n)_{n\in\N}\)
  in~\(\dvr\)
  with \(x''-0 \in \delta_n\cdot S''\)
  for all \(n\in\N\).
  Since~\(g\)
  is a bornological quotient map, there are \(x\in M\)
  and \(S\in\bdd_M\)
  with \(g(x)=x''\)
  and \(g(S) = S''\).
  We may choose \(y''_n\in S''\)
  with \(x'' = \delta_n\cdot y''_n\)
  and \(y_n\in S\)
  with \(g(y_n) = y_n''\).
  So \(g(x - \delta_n y_n)=0\).
  Thus the sequence \((x-\delta_n y_n)\)
  lies in~\(f(M')\).
  It converges to~\(x\),
  which does not belong to~\(f(M')\)
  because \(x''\neq0\).
  So~\(f(M')\)
  is not closed.  This finishes the proof
  of~\ref{lem:separated_hereditary_2}.

  We prove~\ref{lem:separated_hereditary_3}.  Let \(x\in M\)
  belong to the closure of~\(\{0\}\)
  in~\(M\).
  That is, there are \(S\in\bdd_M\)
  and a null sequence \((\delta_n)_{n\in\N}\)
  in~\(\dvr\)
  with \(x\in \delta_n \cdot S\)
  for all \(n\in\N\).
  Then \(g(x) \in \delta_n \cdot g(S)\)
  for all \(n\in\N\).
  This implies \(g(x)=0\)
  because~\(M''\)
  is separated.  So there is \(y\in M'\)
  with \(f(y)=x\).
  And \(f(y) = x \in \delta_n\cdot S\).
  Choose \(x_n\in S\)
  with \(f(y) = \delta_n\cdot x_n\).
  We may assume \(\delta_n\neq0\)
  for all \(n\in\N\)
  because otherwise \(x\in\delta_n\cdot S\)
  is~\(0\).
  Since~\(M''\)
  is torsion-free, \(\delta_n\cdot x_n\in f(M')\)
  implies \(g(x_n)=0\).
  So we may write \(x_n=f(y_n)\)
  for some \(y_n\in M'\).
  Since~\(f\)
  is a bornological embedding, the set \(\setgiven{y_n}{n\in\N}\)
  in~\(M'\)
  is bounded.  Since~\(M'\)
  is separated and \(y = \delta_n\cdot y_n\)
  for all \(n\in\N\),
  we get \(y=0\).  Hence \(x=0\).  So~\(\{0\}\) is closed in~\(M\).
\end{proof}

The quotient \(M/\overline{\{0\}}\)
of a bornological \(\dvr\)\nb-module~\(M\)
by the closure of~\(0\)
is called the \emph{separated quotient} of~\(M\).
It is separated by Lemma~\ref{lem:separated_hereditary}, and it is the
largest separated quotient of~\(M\).
Even more, the quotient map \(M\to M/\overline{\{0\}}\)
is the universal arrow to a separated bornological \(\dvr\)\nb-module,
that is, any bounded \(\dvr\)\nb-linear
map from~\(M\)
to a separated bornological \(\dvr\)\nb-module
factors uniquely through \(M/\overline{\{0\}}\).

The following example shows that
Lemma~\ref{lem:separated_hereditary}.\ref{lem:separated_hereditary_3}
fails without the torsion-freeness assumption.

\begin{example}
  \label{exa:extension_not_separated}
  Let \(M'=\dvr\)
  and let \(M = \dvr[x]/ S\),
  where~\(S\)
  is the \(\dvr\)\nb-submodule
  of~\(\dvr[x]\)
  generated by \(1 - \dvgen^n x^n\)
  for all \(n\in \N\).
  We embed \(M'=\dvr\) as multiples of \(1=x^0\).  Then
  \[
  M/M' = \bigoplus_{n=1}^\infty \dvr/(\dvgen^n),
  \]
  We endow \(M\),
  \(M'\)
  and \(M/M'\)
  with the bornologies where all subsets are bounded.  We get an
  extension of bornological \(\dvr\)\nb-modules
  \(\dvr \into M \onto \bigoplus_{n=1}^{\infty}\dvr/ (\dvgen^n)\).
  Here \(\dvr\)
  and \(\bigoplus_{n=1}^\infty \dvr/(\dvgen^n)\)
  are \(\dvgen\)\nb-adically
  separated, but~\(M\)
  is not: the constant sequence~\(1\)
  in~\(M\)
  converges to~\(0\)
  because \(1 = 1-\dvgen^n x^n + \dvgen^n x^n \equiv \dvgen^n x^n\)
  in~\(M\).
\end{example}

\subsection{Completeness}
\label{sec:complete}

We call a bornological \(\dvr\)\nb-module~\(M\)
\emph{complete} if it is separated and for any \(S\in\bdd_M\)
there is \(T\in\bdd_M\)
so that all \(S\)\nb-Cauchy
sequences are \(T\)\nb-convergent.
Equivalently, any \(S\in\bdd_M\)
is contained in a \(\dvgen\)\nb-adically
complete bounded \(\dvr\)\nb-submodule
(see
\cite{Cortinas-Cuntz-Meyer-Tamme:Nonarchimedean}*{Proposition~2.8}).
By definition, any Cauchy sequence in a complete bornological
\(\dvr\)\nb-module has a unique limit.

\begin{theorem}
  \label{the:extension_complete}
  Let \(M' \xrightarrow{f} M \xrightarrow{g} M''\)
  be an extension of bornological \(\dvr\)\nb-modules.
  \begin{enumerate}
  \item \label{the:extension_complete_1}%
    If~\(M\)
    is complete and~\(f(M')\)
    is closed in~\(M\), then~\(M'\) is complete.
  \item \label{the:extension_complete_1b}%
    If~\(M'\)
    is complete, \(M\)
    separated, and~\(M''\)
    torsion-free, then~\(f(M')\) is closed in~\(M\).
  \item \label{the:extension_complete_2}%
    Let~\(M\)
    be complete.  Then~\(M''\)
    is complete if and only if~\(f(M')\) is closed in~\(M\).
  \item \label{the:extension_complete_3}%
    If \(M'\)
    and~\(M''\)
    are complete and~\(M\) is separated, then~\(M\) is complete.
    If \(M'\)
    and~\(M''\)
    are complete and~\(M''\) is torsion-free, then~\(M\) is complete.
  \end{enumerate}
\end{theorem}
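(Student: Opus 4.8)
The plan is to lean on Lemma~\ref{lem:separated_hereditary} for the separatedness half of each statement and on the characterisation that a bornological $\dvr$-module is complete if and only if it is separated and every bounded $\dvr$-submodule lies in a $\dvgen$-adically complete bounded $\dvr$-submodule. Since complete modules are separated, part of the work is already done: in~\ref{the:extension_complete_2} the implication ``$M''$ complete $\Rightarrow f(M')$ closed'' is Lemma~\ref{lem:separated_hereditary}.\ref{lem:separated_hereditary_2}, and in~\ref{the:extension_complete_3} the second statement follows from the first by Lemma~\ref{lem:separated_hereditary}.\ref{lem:separated_hereditary_3}. What remains is~\ref{the:extension_complete_1}, \ref{the:extension_complete_1b}, the implication ``$f(M')$ closed $\Rightarrow M''$ complete'' of~\ref{the:extension_complete_2}, and the first statement of~\ref{the:extension_complete_3}.

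I would handle the surviving implication of~\ref{the:extension_complete_2} by lifting Cauchy sequences. Assume $M$ complete and $f(M')$ closed; then $M''$ is separated by Lemma~\ref{lem:separated_hereditary}.\ref{lem:separated_hereditary_2}. Given a Cauchy sequence $(\bar x_n)$ in $M''$, say $\bar x_n-\bar x_m\in\delta_j\cdot g(S)$ for $n,m\ge j$ with $\abs{\delta_j}\to0$ and $S$ a bounded $\dvr$-submodule of $M$ (using that $g$ is a bornological quotient map), choose a lift $x_0$ of $\bar x_0$ and inductively pick $s_n\in S$ with $g(\delta_n s_n)=\bar x_{n+1}-\bar x_n$, setting $x_{n+1}\defeq x_n+\delta_n s_n$. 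Then $g(x_n)=\bar x_n$ and $x_{n+1}-x_n\in\delta_n\cdot S$, so $(x_n)$ is Cauchy in $M$; it converges in $M$ by completeness, and its image under~$g$ is a limit of $(\bar x_n)$. Hence $M''$ is complete.

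For~\ref{the:extension_complete_1b} I would argue directly, identifying $M'$ with $f(M')\subseteq M$. Let $(x_n)$ be a sequence in $M'$ with $x_n\to x$ in $M$, say $x_n-x\in\delta_n\cdot S$, $\abs{\delta_n}\to0$, $S$ a bounded $\dvr$-submodule of $M$. Since $g(x_n)=0$, we get $g(x)\in\dvgen^m\cdot g(S)$ for all $m$, so we may pick $s_m\in S$ with $g(x-\dvgen^m s_m)=0$ and set $y_m\defeq x-\dvgen^m s_m\in M'$; then $\{y_m\}$ is bounded in $M'$. Torsion-freeness of $M''$ gives $\dvgen^m S\cap M'=\dvgen^m(S\cap M')$, which makes $(y_m)$ a Cauchy sequence in $M'$; as $M'$ is complete it converges there, say $y_m\to y$, hence also $y_m\to y$ in $M$. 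Since $\dvgen^m s_m\to0$ and $M$ is separated, $y=x$, so $x\in M'$ and $f(M')$ is closed.

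For~\ref{the:extension_complete_1} and the first statement of~\ref{the:extension_complete_3} I would pass to bounded submodules (separatedness being clear in both cases, by Lemma~\ref{lem:separated_hereditary}.\ref{lem:separated_hereditary_1} for~\ref{the:extension_complete_1}). For~\ref{the:extension_complete_1}: given a bounded $\dvr$-submodule $S\subseteq M'$, take a $\dvgen$-adically complete bounded $\dvr$-submodule $U\subseteq M$ with $S\subseteq U$; then $U\cap M'$ is a bounded $\dvr$-submodule of $M'$ containing $S$, it is $\dvgen$-adically closed in $U$ because $M'$ is closed in $M$, and therefore it is $\dvgen$-adically complete. For~\ref{the:extension_complete_3}: given a bounded $\dvr$-submodule $S\subseteq M$, choose a $\dvgen$-adically complete bounded $\dvr$-submodule $T''\subseteq M''$ with $g(S)\subseteq T''$, then a bounded $\dvr$-submodule $R\subseteq M$ with $S\subseteq R$ and $g(R)=T''$, then a $\dvgen$-adically complete bounded $\dvr$-submodule $T'\subseteq M'$ with $R\cap f(M')\subseteq f(T')$; the $\dvr$-submodule $N\defeq R+f(T')$ is bounded, contains $S$, and satisfies $g(N)=T''$ and $N\cap f(M')=f(T')$, so there is an algebraically exact sequence $0\to f(T')\to N\to T''\to0$ whose outer terms are $\dvgen$-adically complete and whose middle term is $\dvgen$-adically separated (as $M$ is separated); one concludes that $N$ is $\dvgen$-adically complete, and then that $M$ is complete. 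The main obstacle is exactly this last step: it rests on the two purely $\dvgen$-adic facts that a $\dvgen$-adically closed submodule of a $\dvgen$-adically complete $\dvr$-module is $\dvgen$-adically complete, and that an algebraically exact extension of $\dvgen$-adically complete $\dvr$-modules with $\dvgen$-adically separated middle term is $\dvgen$-adically complete. These are not formal, since $\dvgen$-adic completion is not exact on arbitrary $\dvr$-modules and the subspace, quotient and extension $\dvgen$-adic topologies need not agree with the intrinsic ones; one must use that $\dvr$ is a complete discrete valuation ring and, in the extension case, control the filtration $\bigl(f(T')\cap\dvgen^n N\bigr)_n$ -- the torsion-freeness of $M''$ plays the analogous role in~\ref{the:extension_complete_1b}. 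The remaining bornological bookkeeping (the inductive lift, the choice of $R$, $T'$, $N$, the identity $N\cap f(M')=f(T')$) is routine.
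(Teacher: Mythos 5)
Your treatment of \ref{the:extension_complete_1b} and \ref{the:extension_complete_2} is sound and essentially the paper's: for \ref{the:extension_complete_2} the term-by-term lifting of a Cauchy sequence is exactly the argument given there, and for \ref{the:extension_complete_1b} your direct proof and the paper's contrapositive one rest on the same mechanism, namely that torsion-freeness of \(M''\) gives \(\dvgen^m S\cap f(M')=\dvgen^m\bigl(S\cap f(M')\bigr)\), so the approximating sequence is Cauchy in \(M'\). Part \ref{the:extension_complete_1} is outsourced by the paper to \cite{Cortinas-Cuntz-Meyer-Tamme:Nonarchimedean}*{Lemma~2.13}; your sketch is fine once you supply the (easy) fact you flag: if \((x_k)\) is \(\dvgen\)\nb-adically Cauchy in a closed submodule \(N\) of a complete \(U\), write \(x-x_k=\dvgen^k\sum_{j\ge k}\dvgen^{j-k}n_j\) and observe that the inner sum converges in \(U\) to an element of \(\overline{N}=N\), so the limit is attained in \(N\)'s own \(\dvgen\)\nb-adic topology.

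The genuine gap is in \ref{the:extension_complete_3}. Your reduction to the exact sequence \(0\to f(T')\to N\to T''\to 0\) of bounded submodules is correct and even clarifying, but the statement you then invoke -- an algebraically exact extension of \(\dvgen\)\nb-adically complete modules with \(\dvgen\)\nb-adically separated middle term is complete -- is not a citable black box: it \emph{is} the theorem for the bornology of all subsets, and all of the difficulty lives there. You cannot get it from the induced filtration \(\bigl(f(T')\cap\dvgen^nN\bigr)_n\) alone, since a \(\dvgen\)\nb-adically complete module need not be complete for a coarser separated filtration (Example~\ref{exa:quotient_complete_by_non-closed} is essentially a counterexample to that implication). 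The missing idea is the correction of lifts by convergent tails, which is the heart of the paper's proof: for a Cauchy sequence with \(x_{n+1}-x_n=\delta_n y_n\), \(y_n\in S\), one forms \(\tilde w_n=-\sum_{k\ge 0}(\delta_{n+k}/\delta_n)\,g(y_{n+k})\) in a complete bounded submodule of \(M''\), lifts to \(w_n\in T_1\subseteq M\), and sets \(\tilde x_k=x_k-\delta_kw_k+w_0-x_0\). Then \(\tilde x_k\in f(M')\), and a telescoping identity shows \(\tilde x_{n+1}-\tilde x_n=\delta_n\cdot z_n\) with \(z_n\in f(M')\cap T_1\) -- genuinely \(\delta_n\) times a bounded element of \(f(M')\), not merely an element of \(f(M')\cap\delta_nT_1\). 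That is exactly the ``control of the filtration'' you defer, and without it the proof of \ref{the:extension_complete_3} (and hence of the \(\dvgen\)\nb-adic fact itself) is incomplete.
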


\begin{proof}
  Statement~\ref{the:extension_complete_1}
  is~\cite{Cortinas-Cuntz-Meyer-Tamme:Nonarchimedean}*{Lemma~2.13},
  and there is no need to repeat the proof here.  It is somewhat
  similar to the proof of~\ref{the:extension_complete_3}.  Next we
  prove~\ref{the:extension_complete_1b}.  Assume that~\(M'\)
  is complete, that \(M''\)
  is torsion-free, and that~\(f(M')\)
  is not closed in~\(M\).
  We are going to prove that~\(M\)
  is not separated.  There is a sequence \((x_n)_{n\in\N}\)
  in~\(M'\)
  for which \(f(x_n)_{n\in\N}\)
  converges in~\(M\)
  towards some \(x\notin f(M')\).
  So there is a bounded set \(S\subseteq M\)
  and a sequence \((\delta_k)_{k\in\N}\)
  in~\(\dvr\)
  with \(\lim {}\abs{\delta_k} = 0\)
  and \(f(x_n)-x \in \delta_n\cdot S\)
  for all \(n\in\N\).
  We may assume without loss of generality that~\(S\)
  is a bounded \(\dvr\)\nb-submodule
  and that the sequence of norms~\(\abs{\delta_n}\)
  is decreasing: let~\(\delta_n^*\)
  be the~\(\delta_m\)
  for \(m\ge n\)
  with maximal norm.  Then
  \(f(x_n) - x \in \delta_n\cdot S\subseteq \delta_n^* \cdot S\)
  and still \(\lim {}\abs{\delta_n^*} =0\).
  We may write \(f(x_n) - x = \delta^*_n y_n\)
  with \(y_n \in S\).
  Let \(m<n\).
  Then \(\delta^*_m g(y_m) = -g(x) = \delta_n^* g(y_n)\)
  and \(\delta_n^*/\delta_m^* \in \dvr\);
  this implies first
  \(\delta^*_m\cdot \bigl(g(y_m)- g(y_n)
  \cdot\delta_n^*/\delta_m^*\bigr) = 0\)
  and then \(g(y_m)= g(y_n) \cdot \delta_n^*/\delta_m^*\)
  because~\(M''\)
  is torsion-free.  So there is \(z_{m,n} \in M'\)
  with \(y_m + f(z_{m,n}) = y_n\cdot \delta_n^*/\delta_m^*\).
  We even have \(z_{m,n} \in f^{-1}(S)\)
  because~\(S\)
  is a \(\dvr\)\nb-submodule.
  The subset \(f^{-1}(S)\subseteq M'\)
  is bounded because~\(f\)
  is a bornological embedding.  We get
  \(f(x_n) - f(x_m) = \delta^*_n y_n - \delta^*_m y_m = f(\delta_m^*
  z_{m,n})\)
  and hence \(x_n - x_m = \delta_m^* z_{m,n}\)
  for \(n>m\).
  This witnesses that the sequence~\((x_n)_{n\in\N}\)
  is Cauchy in~\(M'\).
  Since~\(M'\)
  is complete, it converges towards some \(y\in M'\).
  Then \(f(x_n)\)
  converges both towards \(f(y)\in f(M')\)
  and towards \(x\notin f(M')\).
  So~\(M\)
  is not separated.  This finishes the proof
  of~\ref{the:extension_complete_1b}.

  Next we prove~\ref{the:extension_complete_2}.  If~\(f(M')\)
  is not closed, then Lemma~\ref{lem:separated_hereditary} shows
  that~\(M''\)
  is not separated and hence not complete.  Conversely, we claim
  that~\(M''\)
  is complete if~\(f(M')\)
  is closed.  Lemma~\ref{lem:separated_hereditary} shows that~\(M''\)
  is separated.  Let \(S''\in\bdd_{M''}\).
  There is \(S\in\bdd_M\)
  with \(g(S)=S''\)
  because~\(g\)
  is a bornological quotient map.  And there is \(T\in\bdd_M\)
  so that any \(S\)\nb-Cauchy
  sequence is \(T\)\nb-convergent.
  We claim that any \(S''\)\nb-Cauchy
  sequence is \(g(T)\)\nb-convergent.
  So let \((x''_n)_{n\in\N}\)
  be an \(S''\)\nb-Cauchy
  sequence.  Thus there is a null sequence \((\delta_n)_{n\in\N}\)
  in~\(\dvr\)
  with \(x_n'' - x''_m \in \delta_j \cdot S''\)
  for all \(n,m,j\in\N\)
  with \(n,m \ge j\).
  As above, we may assume without loss of generality that the sequence
  of norms~\(\abs{\delta_n}\)
  is decreasing.  Choose any \(x_0\in M\)
  with \(g(x_0) = x_0''\).
  For each \(n\in\N\),
  choose \(y_n\in S\)
  with \(x_{n+1}''-x''_n = \delta_n\cdot g(y_n)\).  Let
  \[
  x_n \defeq x_0 + \delta_0\cdot y_0 + \dotsb + \delta_{n-1}\cdot y_{n-1}.
  \]
  Then \(g(x_n) = x_n''\).
  And \(x_{n+1} - x_n = \delta_n\cdot y_n \in \delta_n \cdot S\).
  Since~\(\abs{\delta_n}\)
  is decreasing, this implies \(x_m - x_n \in \delta_n\cdot S\)
  for all \(m\ge n\).
  So the sequence~\((x_n)_{n\in\N}\)
  is \(S\)\nb-Cauchy.
  Hence it is \(T\)\nb-convergent.
  Thus \(g(x_n) = x_n''\)
  is \(g(T)\)\nb-convergent
  as asserted.  This finishes the proof
  of~\ref{the:extension_complete_2}.

  Finally, we prove~\ref{the:extension_complete_3}.  So we assume
  \(M'\)
  and~\(M''\)
  to be complete.  If~\(M''\)
  is torsion-free, then~\(M\)
  is separated by Lemma~\ref{lem:separated_hereditary}.  Hence the
  second statement in~\ref{the:extension_complete_3} is a special case
  of the first one.  Let \(S\in\bdd_M\).
  We must find \(T\in\bdd_M\)
  so that every \(S\)\nb-Cauchy
  sequence is \(T\)\nb-convergent.
  Since~\(M\)
  is separated, this says that it is complete.  Since~\(M''\)
  is complete, there is a \(\dvgen\)\nb-adically
  complete \(\dvr\)\nb-submodule
  \(T_0\in\bdd_{M''}\)
  that contains \(g(S)\).
  Since~\(g\)
  is a bornological quotient map, there is \(T_1\in\bdd_M\)
  with \(g(T_1) = T_0\).
  Replacing it by \(T_1+S\),
  we may arrange, in addition, that \(S\subseteq T_1\).
  Since \(f\)
  is a bornological embedding, \(T_2 \defeq f^{-1}(T_1)\)
  is bounded in~\(M'\).
  As~\(M'\)
  is complete, there is \(T_3\in\bdd_{M'}\)
  so that every \(T_2\)\nb-Cauchy
  sequence is \(T_3\)\nb-convergent.
  We claim that any \(S\)\nb-Cauchy
  sequence is \(T_1 + f(T_3)\)-convergent.
  The proof of this claim will finish the proof of the theorem.

  Let \((x_n)_{n\in\N}\)
  be an \(S\)\nb-Cauchy
  sequence.  So there are \(\delta_n\in \dvr\)
  and \(y_n\in S\)
  with \(\lim {}\abs{\delta_n} = 0\)
  and \(x_{n+1} - x_n = \delta_n \cdot y_n\).
  As above, we may assume that~\(\abs{\delta_n}\)
  is decreasing and that \(\delta_0=1\).
  Since \(g(y_{n+k})\in g(S) \subseteq T_0\)
  and~\(T_0\)
  is \(\dvgen\)\nb-adically
  complete, the following series converges in~\(T_0\):
  \begin{equation}
    \label{eq:def_tilde_wn}
    \tilde{w}_n \defeq
    -\sum_{k=0}^\infty \frac{\delta_{n+k}}{\delta_n} g(y_{n+k}).
  \end{equation}
  Since \(\tilde{w}_n\in T_0\),
  there is \(w_n\in T_1\) with \(g(w_n) = \tilde{w}_n\).  So
  \begin{multline*}
    \delta_n g(w_n)
    = \lim_{N\to\infty} -g\left(\sum_{k=0}^N \delta_{n+k} y_{n+k} \right)
    \\= \lim_{N\to\infty} g(x_n - x_{N+n+1})
    = g(x_n) - \lim_{N\to\infty} g(x_N).
  \end{multline*}
  In particular, \(g(w_0) = g(x_0) - \lim_{N\to\infty} g(x_N)\).
  Now let
  \[
  \tilde{x}_k \defeq x_k - \delta_k w_k + w_0 - x_0.
  \]
  Then
  \[
  g(\tilde{x}_k)
  = g(x_k) - g(x_k) + \lim_{N \to \infty} g(x_N)
  + g(x_0) - \lim_{N \to \infty} g(x_N) - g(x_0) = 0.
  \]
  So \(\tilde{x}_k\in f(M')\) for all \(k\in\N\).  And
  \begin{multline}
    \label{eq:tilde_x_Cauchy}
    \tilde{x}_{n+1} - \tilde{x}_n
    = x_{n+1} - x_n - \delta_{n+1}w_{n+1} + \delta_n w_n
    \\= \delta_n y_n + \delta_n w_n - \delta_{n+1}w_{n+1}
    = \delta_n \cdot \left(y_n + w_n -
    \frac{\delta_{n+1}}{\delta_n}w_{n+1}\right).
  \end{multline}
  Let \(z_n \defeq y_n + w_n -
  \frac{\delta_{n+1}}{\delta_n}w_{n+1}\).
  A telescoping sum argument shows that
  \begin{equation}
    \label{telescoping_sum}
    g(z_n)
    = g(y_n) + \tilde{w}_n
    - \frac{\delta_{n+1}}{\delta_n} \tilde{w}_{n+1}
    = 0.
  \end{equation}
  So \(z_n \in f(M')\).
  And \(z_n \in S + T_1 + T_1 = T_1\).
  Thus there is \(\hat{z}_n \in f^{-1}(T_1) = T_2\)
  with \(z_n = f(\hat{z}_n)\).
  Equation~\eqref{eq:tilde_x_Cauchy} means that the sequence
  \(f^{-1}(\tilde{x}_n)\)
  is \(T_2\)\nb-Cauchy.
  Hence it is \(T_3\)\nb-convergent.
  So~\((\tilde{x}_n)\)
  is \(f(T_3)\)\nb-convergent.
  Then~\((x_n)\) is \(T_1+f(T_3)\)-convergent.
\end{proof}

The following examples show that the technical extra assumptions in
\ref{the:extension_complete_1b} and~\ref{the:extension_complete_3} in
Theorem~\ref{the:extension_complete} are necessary.  They only involve
extensions of \(\dvr\)\nb-modules
with the bornology where all subsets are bounded.  For this bornology,
bornological completeness and separatedness are the same as
\(\dvgen\)\nb-adic
completeness and separatedness, respectively, and any extension of
\(\dvr\)\nb-modules is a bornological extension.

\begin{example}
  Let \(M' \defeq \{0\}\)
  and \(M \defeq \dvf\)
  with the bornology of all subsets.  Then~\(M'\)
  is bornologically complete, but not closed in~\(M\),
  and \(M/M' = M\)
  is torsion-free.  So
  Theorem~\ref{the:extension_complete}.\ref{the:extension_complete_1b}
  needs the assumption that~\(M\) be separated.
\end{example}

\begin{example}
  \label{exa:quotient_complete_by_non-closed}
  Let~\(M\)
  be the \(\dvr\)\nb-module
  of all power series \(\sum_{n=0}^\infty c_n x^n\)
  with \(\lim {}\abs{c_n} = 0\)
  and with the bornology where all subsets are bounded; this is the
  \(\dvgen\)\nb-adic
  completion of the polynomial algebra~\(\dvr[x]\).
  Let \(M' = M\)
  and define \(f\colon M' \to M\),
  \(f\bigl( \sum_{n=0}^\infty c_n x^n\bigr) \defeq \sum_{n=0}^\infty
  c_n \dvgen^n x^n\).
  This is a bornological embedding simply because all subsets in
  \(M=M'\)
  are bounded.  Let \(p_n \defeq \sum_{j=0}^n x^j\).
  This sequence in \(M'=M\)
  does not converge.  Nevertheless, the sequence
  \(f(p_n) = \sum_{j=0}^n \dvgen^j x^j\)
  converges in~\(M\)
  to \(\sum_{j=0}^\infty \dvgen^j x^j\).
  Thus \(f(M')\)
  is not closed in~\(M\),
  although \(M\)
  and~\(M'\)
  are complete and~\(f\)
  is a bornological embedding.  So
  Theorem~\ref{the:extension_complete}.\ref{the:extension_complete_1b}
  needs the assumption that~\(M''\) be torsion-free.
\end{example}

\begin{example}
  \label{exa:extension_of_complete_not_separated}
  We modify Example~\ref{exa:extension_not_separated} to produce an
  extension of \(\dvr\)\nb-modules
  \(N' \into N \onto N''\)
  where \(N'\)
  and~\(N''\)
  are \(\dvgen\)\nb-adically
  complete, but~\(N\)
  is not \(\dvgen\)\nb-adically
  separated and hence not \(\dvgen\)\nb-adically
  complete.  We let \(N' \defeq \dvr/(\dvgen) = \resf\).
  We let~\(N''\)
  be the \(\dvgen\)\nb-adic
  completion of the \(\dvr\)\nb-module~\(M''\)
  of Example~\ref{exa:extension_not_separated}.  That is,
  \[
  N'' \defeq \setgiven[\bigg]{ (c_n)_{n\in\N} \in \prod_{n=0}^\infty \dvr/(\dvgen^n)}{\lim{} \abs{c_n}=0}.
  \]
  This is indeed \(\dvgen\)\nb-adically complete.  So is
  \[
  N_1 \defeq
  \setgiven[\bigg]{ (c_n)_{n\in\N} \in \prod_{n=0}^\infty \dvr/(\dvgen^{n+1})}{\lim{} \abs{c_n}=0}.
  \]
  The kernel of the quotient map \(q\colon N_1\onto N''\)
  is isomorphic to
  \(\prod_{n=0}^\infty \dvr/(\dvgen) = \prod_\N \resf\).
  This is a \(\resf\)\nb-vector
  space, and it contains the \(\resf\)\nb-vector
  space \(\sum_{n=0}^\infty \resf\).
  Since any \(\resf\)\nb-vector
  space has a basis, we may extend the linear functional
  \(\sum_{n=0}^\infty \resf \to \resf\),
  \((c_n)_{n\in\N} \mapsto \sum_{n=0}^\infty c_n\),
  to a \(\resf\)\nb-linear
  functional \(\sigma\colon \prod_\N \resf \to \resf\).
  Let \(L\defeq \ker \sigma \subseteq \ker q\)
  and let \(N\defeq N_1/L\).
  The map~\(q\)
  descends to a surjective \(\dvgen\)\nb-linear
  map \(N \onto N''\).
  Its kernel is isomorphic to
  \(\prod_\N \resf/\ker \sigma \cong \resf = N'\).
  The functional \(\sigma\colon \prod_\N \resf \to \resf\)
  vanishes on \(\delta_0 - \delta_k\)
  for all \(k\in\N\),
  but not on~\(\delta_0\).
  When we identify \(\prod_\N \resf \cong \ker q\),
  we map~\(\delta_k\)
  to \(\dvgen^k \delta_k \in N_1\).
  So \(\delta_0\)
  and~\(\dvgen^k \delta_k\)
  get identified in~\(N\),
  but~\(\delta_0\)
  does not become~\(0\):
  it is the generator of \(N' = \dvr/(\dvgen)\)
  inside~\(N\).
  Since \([\delta_0] = \dvgen^k [\delta_k]\)
  in~\(N\),
  the \(\dvr\)\nb-module~\(N\)
  is not \(\dvgen\)\nb-adically separated.
\end{example}

The \emph{completion} \(\comb{M}\)
of a bornological \(\dvr\)\nb-module~\(M\)
is a complete bornological \(\dvr\)\nb-module
with a bounded \(\dvr\)\nb-linear
map \(M\to \comb{M}\)
that is universal in the sense that any bounded \(\dvr\)\nb-linear
map from~\(M\)
to a complete bornological \(\dvr\)\nb-module~\(X\)
factors uniquely through~\(\comb{M}\).
Such a completion exists and is unique up to isomorphism
(see~\cite{Cortinas-Cuntz-Meyer-Tamme:Nonarchimedean}*{Proposition~2.15}).
We shall describe it more concretely later when we need the details of
its construction.

\subsection{Vector spaces over the fraction field}
\label{sec:dvf-vector_spaces}

Recall that~\(\dvf\)
denotes the quotient field of~\(\dvr\).
Any \(\dvr\)\nb-linear
map between two \(\dvf\)\nb-vector
spaces is also \(\dvf\)\nb-linear.
So \(\dvf\)\nb-vector
spaces with \(\dvf\)\nb-linear
maps form a full subcategory in the category of \(\dvr\)\nb-modules.
A \(\dvr\)\nb-module~\(M\)
comes from a \(\dvf\)\nb-vector space if and only if the map
\begin{equation}
  \label{eq:mult_dvgen}
  \dvgen_M\colon M\to M,\qquad m\mapsto \dvgen\cdot m,
\end{equation}
is invertible.  We could define bornological \(\dvf\)\nb-vector
spaces without reference to~\(\dvr\).
Instead, we realise them as bornological \(\dvr\)\nb-modules
with an extra property:

\begin{definition}
  A bornological \(\dvr\)\nb-module~\(M\)
  is a \emph{bornological \(\dvf\)\nb-vector
    space} if the map~\(\dvgen_M\)
  in~\eqref{eq:mult_dvgen} is a bornological isomorphism, that is, an
  invertible map with bounded inverse.
\end{definition}

Given a bornological \(\dvr\)\nb-module~\(M\),
the tensor product \(\dvf\otimes M \defeq \dvf\otimes_\dvr M\)
with the tensor product bornology (see
\cite{Cortinas-Cuntz-Meyer-Tamme:Nonarchimedean}*{Lemma~2.18}) is a
bornological \(\dvf\)\nb-vector
space because multiplication by~\(\dvgen\)
is a bornological isomorphism on~\(\dvf\).

\begin{lemma}
  \label{lem:universal_to_vector_space}
  The canonical bounded \(\dvr\)\nb-linear map
  \[
  \iota_M \colon M \to \dvf\otimes M,\qquad m \mapsto 1 \otimes m,
  \]
  is the universal arrow from~\(M\)
  to a bornological \(\dvf\)\nb-vector
  space, that is, any bounded \(\dvr\)\nb-linear
  map \(f\colon M\to N\)
  to a bornological \(\dvf\)\nb-vector
  space~\(N\)
  factors uniquely through a bounded \(\dvr\)\nb-linear
  map \(f^\#\colon \dvf\otimes M \to N\),
  and this map is also \(\dvf\)\nb-linear.
\end{lemma}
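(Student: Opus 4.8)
The plan is to separate the purely algebraic content -- the universal property of localisation of modules along \(\dvr\to\dvf\) -- from the bornological content, namely boundedness of the factoring map. Since~\(\dvr\) is a discrete valuation ring, \(\dvf\otimes_\dvr M\) is the localisation of~\(M\) at the powers of~\(\dvgen\), so every element of \(\dvf\otimes M\) has the form \(\dvgen^{-n}\otimes m\) with \(n\in\N\), \(m\in M\), and \(\iota_M(M)\) spans \(\dvf\otimes M\) over~\(\dvf\). Given a bounded \(\dvr\)\nb-linear map \(f\colon M\to N\) into a bornological \(\dvf\)\nb-vector space~\(N\), the map~\(\dvgen_N\) is invertible, so the standard universal property of localisation gives a unique \(\dvr\)\nb-linear map \(f^\#\colon \dvf\otimes M\to N\) with \(f^\#\circ\iota_M=f\); explicitly \(f^\#(\dvgen^{-n}\otimes m)=\dvgen_N^{-n}\bigl(f(m)\bigr)\). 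It is automatically \(\dvf\)\nb-linear, since any \(\dvr\)\nb-linear map between bornological \(\dvf\)\nb-vector spaces is \(\dvf\)\nb-linear (Subsection~\ref{sec:dvf-vector_spaces}); and uniqueness even among \(\dvr\)\nb-linear maps follows because two such maps are both \(\dvf\)\nb-linear and agree on the spanning set~\(\iota_M(M)\).

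It remains to check boundedness. For~\(\iota_M\) this is immediate from the construction of the tensor product bornology. For~\(f^\#\), I would combine the description of the tensor product bornology on \(\dvf\otimes_\dvr M\) (\cite{Cortinas-Cuntz-Meyer-Tamme:Nonarchimedean}*{Lemma~2.18}) with the observation that the bounded \(\dvr\)\nb-submodules of~\(\dvf\) are cofinal with the submodules \(\dvgen^{-n}\dvr\), \(n\in\N\): any bounded subset of \(\dvf\otimes M\) lies in the image of \(\dvgen^{-n}\dvr\otimes_\dvr B\) for some \(n\in\N\) and some bounded \(\dvr\)\nb-submodule \(B\subseteq M\), that is, in \(\setgiven{\dvgen^{-n}\otimes b}{b\in B}\). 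Applying~\(f^\#\) sends this set to \(\dvgen_N^{-n}\bigl(f(B)\bigr)\), which is bounded because~\(f\) is bounded and \(\dvgen_N^{-1}\), hence also \(\dvgen_N^{-n}\), is bounded on the bornological \(\dvf\)\nb-vector space~\(N\). Thus \(f^\#\) maps bounded sets to bounded sets.

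The only point that needs care is the explicit shape of the bounded subsets of \(\dvf\otimes M\), which rests on the tensor product bornology and on the cofinal structure of the bounded submodules of~\(\dvf\); once this is pinned down, no genuine obstacle remains, as existence, uniqueness and \(\dvf\)\nb-linearity of~\(f^\#\) are all formal.
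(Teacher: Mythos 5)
Your proposal is correct and follows essentially the same route as the paper: existence, uniqueness and automatic \(\dvf\)\nb-linearity of \(f^\#\) are formal (the paper phrases this via the fact that any \(\dvr\)\nb-linear map out of a \(\dvf\)\nb-vector space is \(\dvf\)\nb-linear, you via the universal property of localisation, which amounts to the same thing), and boundedness is checked in both cases by observing that every bounded subset of \(\dvf\otimes M\) sits inside \(\dvgen^{-k}\dvr\otimes S\) for a bounded submodule \(S\subseteq M\), whose image under \(f^\#\) is \(\dvgen_N^{-k}(f(S))\) and hence bounded because \(\dvgen_N\) is a bornological isomorphism.
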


\begin{proof}
  A \(\dvr\)\nb-linear
  map \(f^\#\colon \dvf\otimes M \to N\)
  must be \(\dvf\)\nb-linear.
  Hence the only possible candidate is the \(\dvf\)\nb-linear
  map defined by \(f^\#(x\otimes m) \defeq x\cdot f(m)\)
  for \(m\in M\),
  \(x\in \dvf\).
  Any bounded submodule of \(\dvf\otimes M\)
  is contained in \(\dvgen^{-k}\dvr \otimes S\)
  for some bounded submodule \(S\subseteq M\)
  and some \(k\in\N\),
  and \(f^\#(\dvgen^{-k}\dvr \otimes S) = \dvgen_N^{-k}(f(S))\)
  is bounded in~\(N\)
  because~\(\dvgen_N\)
  is a bornological isomorphism.  Thus~\(f^\#\) is bounded.
\end{proof}

\section{Spectral radius and semi-dagger algebras}
\label{sec:semi-dagger}

A \emph{bornological \(\dvr\)\nb-algebra}
is a bornological \(\dvr\)\nb-module~\(A\)
with a bounded, \(\dvr\)\nb-linear,
associative multiplication.  We do not assume~\(A\)
to have a unit element.  We fix a bornological
\(\dvr\)\nb-algebra~\(A\) throughout this section.

We recall some definitions
from~\cite{Cortinas-Cuntz-Meyer-Tamme:Nonarchimedean}.  Let
\(\varepsilon = \abs{\dvgen}\).
Let \(S\in\bdd_A\)
and let \(r\le1\).
There is a smallest integer~\(j\)
with \(\varepsilon^j \le r\),
namely, \(\ceil{\log_{\varepsilon}(r)}\).  Define
\[
r \star S \defeq \dvgen^{\ceil{\log_{\varepsilon}(r)}} \cdot S.
\]
Let \(\sum_{n=1}^\infty r^n \star S^n\)
be the \(\dvr\)\nb-submodule
generated by \(\bigcup_{n=1}^\infty r^n \star S^n\).
That is, its elements are finite \(\dvr\)\nb-linear
combinations of elements in \(\bigcup_{n=1}^\infty r^n \star S^n\).

\begin{definition}
  The \emph{truncated spectral radius}
  \(\varrho_1(S) = \varrho_1(S;\bdd_A)\)
  of \(S \in\bdd_A\)
  is the infimum of all \(r\ge 1\)
  for which \(\sum_{n=1}^\infty r^{-n} \star S^n\)
  is bounded.  It is~\(\infty\) if no such~\(r\) exists.
\end{definition}

By definition, \(\varrho_1(S) \in [1,\infty]\).
If~\(A\)
is an algebra over the fraction field~\(\dvf\)
of~\(\dvr\),
then we may define \(\sum_{n=1}^\infty r^{-n} \star S^n\)
also for \(0<r<1\).
Then the full spectral radius \(\varrho(S) \in [0,\infty]\)
is defined like~\(\varrho_1(S)\),
but without the restriction to \(r\ge1\).
The arguments in the beginning of Section~3.1
in~\cite{Cortinas-Cuntz-Meyer-Tamme:Nonarchimedean} assume implicitly
that the \(\dvr\)\nb-algebra~\(A\)
is a bornological subalgebra in a \(\dvf\)\nb-algebra,
so that the spectral radius is defined without truncation.  This means
that~\(A\)
is bornologically torsion-free (see
Proposition~\ref{pro:bornological_torsion_characterisation}).  The
results in
\cite{Cortinas-Cuntz-Meyer-Tamme:Nonarchimedean}*{Section~3.1} work
without this assumption if the truncated spectral radius is used
througout and the following lemma is used instead of
\cite{Cortinas-Cuntz-Meyer-Tamme:Nonarchimedean}*{Lemma~3.1.2}:

\begin{lemma}
  \label{lgb_equivalence}
  Let \(S\subseteq A\)
  be a bounded \(\dvr\)\nb-submodule
  and \(m\in \N_{\ge1}\).
  Then \(\varrho_1(S) = 1\)
  if and only if \(\sum_{l=1}^\infty (\dvgen^m S^j)^l\)
  is bounded for all \(j \in \N_{\ge1}\).
\end{lemma}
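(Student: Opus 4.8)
The plan is to reduce both implications to comparisons between the bounded $\dvr$-submodules of the form $\dvgen^a S^b$, using monotonicity of the relevant sums in the parameter $r$. First observe that $(\dvgen^m S^j)^l = \dvgen^{ml}S^{jl}$, so the right-hand condition says precisely that $C_j \defeq \sum_{i\ge1}\dvgen^{mi}S^{ji}$ is bounded for every $j\ge1$. On the left, recall that $r^{-n}\star S^n = \dvgen^{\ceil{n\lambda}}S^n$ with $\lambda\defeq-\log_\varepsilon r=\log_{1/\varepsilon}r\ge0$; since $\log_{1/\varepsilon}$ is increasing, $1\le r\le r'$ gives $\ceil{n\lambda}\le\ceil{n\lambda'}$ and hence $r^{-n}\star S^n\supseteq(r')^{-n}\star S^n$ for all~$n$, where $\lambda'\defeq\log_{1/\varepsilon}r'$. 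Thus $r\mapsto\sum_{n\ge1}r^{-n}\star S^n$ is decreasing, and together with the definition of $\varrho_1(S)$ as an infimum this shows that $\varrho_1(S)=1$ if and only if $\sum_{n\ge1}r^{-n}\star S^n$ is bounded for every $r>1$. After these reformulations both implications become matters of comparing index sets.

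For the forward implication, fix $j\ge1$ and set $r\defeq\varepsilon^{-m/j}$, which is $>1$ since $0<\varepsilon<1$. Then $\lambda=m/j$, so $\ceil{jl\lambda}=\ceil{ml}=ml$ and therefore $r^{-jl}\star S^{jl}=\dvgen^{ml}S^{jl}$ for all $l\ge1$. Since every $jl$ with $l\ge1$ occurs among the indices $n\ge1$, the submodule $C_j=\sum_{l\ge1}\dvgen^{ml}S^{jl}$ is contained in $\sum_{n\ge1}r^{-n}\star S^n$, which is bounded because $r>1=\varrho_1(S)$. As $j$ was arbitrary, the right-hand condition holds.

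For the converse, fix $r>1$, put $\lambda\defeq\log_{1/\varepsilon}r>0$, and choose $j\defeq\ceil{m/\lambda}\ge1$, so that $j\lambda\ge m$. I claim that $\bigcup_{n\ge1}r^{-n}\star S^n$ lies in a bounded subset of~$A$; then the submodule it generates is bounded, and since $r>1$ is arbitrary this yields $\varrho_1(S)=1$. For $1\le n<j$ there are only finitely many terms $r^{-n}\star S^n$, each bounded. For $n\ge j$ write $n=jl+s$ with $l\defeq\floor{n/j}\ge1$ and $0\le s\le j-1$; then $\ceil{n\lambda}\ge\ceil{jl\lambda}\ge\ceil{ml}=ml$ (using $n\ge jl$, $\lambda>0$ and $jl\lambda\ge ml$), so $r^{-n}\star S^n\subseteq\dvgen^{ml}S^n$. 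If $s=0$ this equals $\dvgen^{ml}S^{jl}\subseteq C_j$. If $s\ge1$, then $S^n=S^{jl}\cdot S^s$, so $\dvgen^{ml}S^n=(\dvgen^{ml}S^{jl})\cdot S^s\subseteq C_j\cdot B$, where $B\defeq\sum_{i=1}^{j-1}S^i$ is a bounded $\dvr$-submodule and $C_j\cdot B$ is bounded because multiplication on~$A$ is bounded. Hence $\bigcup_{n\ge1}r^{-n}\star S^n$ is contained in the union of $C_j$, $C_j\cdot B$, and finitely many bounded sets, so it is bounded.

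The routine inputs are the identities $(\dvgen^m S^j)^l=\dvgen^{ml}S^{jl}$ and $S^{a+b}=S^a\cdot S^b$ for $a,b\ge1$, together with the defining property that the $\dvr$-submodule generated by a bounded set is bounded. The step requiring care is the converse: the integer~$j$ must be chosen depending on~$r$ so that $j\lambda\ge m$ --- the delicate regime being $r$ close to~$1$, which forces $j$ large --- and then the index~$n$ must be organised by its residue modulo~$j$ so that the leftover power $S^s$ is absorbed into the single fixed bounded submodule~$B$; this is also why the absence of a unit is harmless, beyond having to treat the case $s=0$ separately.
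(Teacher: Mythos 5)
Your proof is correct. The forward implication is identical to the paper's: both take \(r=\varepsilon^{-m/j}\) and observe that \(\sum_{l}\dvgen^{ml}S^{jl}\) is contained in \(\sum_{n}r^{-n}\star S^{n}\), which is bounded once \(\varrho_1(S)=1\). For the converse the paper is shorter but less self-contained: from the boundedness of \(\sum_{l}(\dvgen^m S^j)^l\) it concludes \(\varrho_1(\dvgen^m S^j)=1\) and then cites the proof of Lemma~3.1.2 of Corti\~nas--Cuntz--Meyer--Tamme for the estimate \(\varrho_1(S)\le\varepsilon^{-m/j}\), which tends to \(1\) as \(j\to\infty\). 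You instead prove that estimate from scratch: given \(r>1\) you choose \(j\) with \(j\lambda\ge m\), split each index as \(n=jl+s\) with \(0\le s<j\), and absorb the leftover factor \(S^s\) into the single bounded submodule \(B=\sum_{i=1}^{j-1}S^i\). This is essentially the content of the cited lemma, so the two arguments share the same mathematical core; what your version buys is independence from the reference, at the price of the index bookkeeping you rightly flag as the delicate point (the regime \(r\) close to \(1\), which forces \(j\) to be large). All the individual steps check out, including the monotonicity of \(r\mapsto\sum_n r^{-n}\star S^n\) and the reduction of \(\varrho_1(S)=1\) to boundedness for every \(r>1\).
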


\begin{proof}
  Let \(\varrho_1(S)= 1\)
  and \(j \in \N_{\ge1}\).
  Then
  \(\sum_{l=1}^\infty \dvgen^{m \cdot l} S^{j\cdot l} \subseteq
  \sum_{k=1}^\infty (\varepsilon^{m/j})^k \star S^k\)
  is bounded because \(\varepsilon^{m/j}<1\).
  Conversely, let \(\sum_{l=1}^\infty (\dvgen^m S^j)^l\)
  be bounded.  Then \(\varrho_1(\dvgen^m S^j)= 1\).
  The proof of
  \cite{Cortinas-Cuntz-Meyer-Tamme:Nonarchimedean}*{Lemma~3.1.2} shows
  that \(\varrho_1(S) \le \varepsilon^{-m/j}\).
  This inequality for all \(j\in \N_{\ge1}\)
  implies \(\varrho_1(S) = 1\).
\end{proof}

\begin{definition}
  A bornological \(\dvr\)\nb-algebra~\(A\)
  is \emph{semi-dagger} if \(\varrho_1(S) = 1\)
  for all \(S\in\bdd_A\).
\end{definition}

\begin{proposition}[\cite{Cortinas-Cuntz-Meyer-Tamme:Nonarchimedean}*{Proposition~3.1.3}]
  \label{pro:semi-dagger_criterion}
  A bornological \(\dvr\)\nb-algebra~\(A\)
  is semi-dagger if and only if \(\sum_{i=0}^\infty \dvgen^i S^{i+1}\)
  is bounded for all \(S\in\bdd_A\),
  if and only if \(\sum_{i=0}^\infty \dvgen^i S^{ci+d}\)
  is bounded for all \(S\in\bdd_A\)
  and \(c,d\in \N\)
  with \(d\ge 1\),
  if and only if any \(S\in\bdd_A\)
  is contained in a bounded \(\dvr\)\nb-submodule
  \(U\subseteq A\) with \(\dvgen\cdot U\cdot U \subseteq U\).
\end{proposition}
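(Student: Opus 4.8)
The plan is to prove the implications $(1)\Rightarrow(3)$, $(3)\Rightarrow$ semi-dagger, semi-dagger $\Rightarrow(2)$, and $(2)\Rightarrow(1)$; together these give all four equivalences. As every bounded set lies in a bounded $\dvr$\nb-submodule and each of the four conditions passes from such a submodule to its bounded subsets, we may assume throughout that $S$ is a bounded $\dvr$\nb-submodule. The implication $(2)\Rightarrow(1)$ is just the case $c=d=1$. For $(1)\Rightarrow(3)$, set $U\defeq\sum_{i=0}^{\infty}\dvgen^{i}S^{i+1}$; this is bounded by~(1), it contains $S$ (the $i=0$ summand), and the product of two generators $\dvgen^{a}S^{a+1}$ and $\dvgen^{b}S^{b+1}$ with one further factor $\dvgen$ lies in $\dvgen^{k}S^{k+1}$ with $k=a+b+1\ge1$, so $\dvgen\cdot U\cdot U\subseteq U$.

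For $(3)\Rightarrow$ semi-dagger I would use Lemma~\ref{lgb_equivalence} with $m=1$: it reduces the assertion $\varrho_1(S)=1$ to the boundedness of $\sum_{l=1}^{\infty}(\dvgen S^{j})^{l}$ for every $j\in\N_{\ge1}$. Apply hypothesis~(3) not to $S$ but to the bounded set $S^{j}$, obtaining a bounded $\dvr$\nb-submodule $U\supseteq S^{j}$ with $\dvgen\cdot U\cdot U\subseteq U$. A short induction on~$l$ gives $\dvgen^{l-1}U^{l}\subseteq U$ for all $l\ge1$, whence $(\dvgen S^{j})^{l}=\dvgen^{l}(S^{j})^{l}\subseteq\dvgen^{l}U^{l}=\dvgen\cdot(\dvgen^{l-1}U^{l})\subseteq\dvgen\cdot U\subseteq U$. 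Thus $\sum_{l\ge1}(\dvgen S^{j})^{l}\subseteq U$ is bounded, as required.

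For semi-dagger $\Rightarrow(2)$: if $c=0$ the series in~(2) is the bounded set $S^{d}$, so assume $c\ge1$ and put $r\defeq\varepsilon^{-1/c}>1$. Since $\varrho_1(S)=1<r$, the $\dvr$\nb-submodule $B\defeq\sum_{n=1}^{\infty}r^{-n}\star S^{n}$ is bounded. Its $n=ci$ summand is $\varepsilon^{i}\star S^{ci}=\dvgen^{\ceil{\log_{\varepsilon}(\varepsilon^{i})}}S^{ci}=\dvgen^{i}S^{ci}$, so $\dvgen^{i}S^{ci}\subseteq B$ for all $i\ge1$. Hence $\dvgen^{i}S^{ci+d}=\dvgen^{i}S^{ci}\cdot S^{d}\subseteq B\cdot S^{d}$ for $i\ge1$, and adding the $i=0$ term $S^{d}$ shows that $\sum_{i=0}^{\infty}\dvgen^{i}S^{ci+d}$ lies in the bounded $\dvr$\nb-submodule generated by $S^{d}\cup(B\cdot S^{d})$.

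The only real difficulty is the step into the semi-dagger condition (equivalently $(1)\Rightarrow$ semi-dagger). Conditions~(1)--(3) each concern one bounded set and one power of $\dvgen$, whereas $\varrho_1(S)=1$ simultaneously packages the boundedness of $\sum_{n}r^{-n}\star S^{n}$ for \emph{every} rate $r>1$. Lemma~\ref{lgb_equivalence} is precisely the tool that disentangles this: it trades $\varrho_1(S)=1$ for the single-shape conditions $\sum_{l}(\dvgen S^{j})^{l}$, $j\in\N_{\ge1}$, and the trick of feeding the \emph{powers} $S^{j}$ (rather than $S$) into~(3) is what makes those meet the hypothesis; everything else is routine bookkeeping with products of bounded submodules. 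One could instead estimate $\sum_{n\ge1}r^{-n}\star S^{n}$ by hand for each $r>1$, decomposing $S^{n}$ into $\floor{n/j}$ copies of $S^{j}$ with a bounded remainder and absorbing the available powers of $\dvgen$ via $\dvgen^{q-1}U^{q}\subseteq U$ with $q=\floor{n/j}$, once $j$ is large enough, but this route is more cumbersome.
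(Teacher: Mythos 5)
Your proof is correct. Note that the paper does not prove this proposition at all --- it is quoted from \cite{Cortinas-Cuntz-Meyer-Tamme:Nonarchimedean}*{Proposition~3.1.3} --- so there is no in-paper argument to compare against; what you have written is a valid self-contained derivation from the definitions together with Lemma~\ref{lgb_equivalence}. The cycle $(1)\Rightarrow(3)\Rightarrow\text{semi-dagger}\Rightarrow(2)\Rightarrow(1)$ checks out, and you correctly identify the only nontrivial point: a single submodule $U\supseteq S$ with $\dvgen UU\subseteq U$ only yields $\dvgen^{n-1}S^n\subseteq U$, which is too weak to bound $\sum_n r^{-n}\star S^n$ for $r$ close to $1$; feeding the powers $S^j$ into condition (3) and invoking Lemma~\ref{lgb_equivalence} (whose proof does not rely on the present proposition, so there is no circularity) is exactly the right fix.
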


\begin{definition}
  The \emph{linear growth bornology} on a bornological
  \(\dvr\)\nb-algebra~\(A\)
  is the smallest semi-dagger bornology on~\(A\).
  That is, it is the smallest bornology~\(\bdd'_A\)
  with \(\varrho_1(S;\bdd'_A)= 1\)
  for all \(S\in\bdd'_A\).
  Let~\(\ling{A}\) be~\(A\) with the linear growth bornology.
\end{definition}

The existence of a smallest semi-dagger bornology is shown
in~\cite{Cortinas-Cuntz-Meyer-Tamme:Nonarchimedean} by describing it
explicitly as follows:

\begin{lemma}[\cite{Cortinas-Cuntz-Meyer-Tamme:Nonarchimedean}*{Proposition~3.1.3
    and Lemma~3.1.10}]
  \label{equivalent_semidagger}
  Let \(T\subseteq A\).  The following are equivalent:
  \begin{enumerate}
  \item \label{equivalent_semidagger_1}%
    \(T\) is bounded in~\(\ling{A}\);
  \item \label{equivalent_semidagger_2}%
    \(T \subseteq \sum_{i=0}^\infty \dvgen^i S^{i+1}\)
    for some \(S\in\bdd_A\).
  \item \label{equivalent_semidagger_3}%
    \(T \subseteq \sum_{i=0}^\infty \dvgen^i S^{ci+d}\)
    for some \(S\in\bdd_A\) and \(c,d\in \N\) with \(d\ge 1\).
  \end{enumerate}
\end{lemma}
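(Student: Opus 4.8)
The plan is to prove the cycle \ref{equivalent_semidagger_2}$\Rightarrow$\ref{equivalent_semidagger_3}$\Rightarrow$\ref{equivalent_semidagger_1}$\Rightarrow$\ref{equivalent_semidagger_2}. The implication \ref{equivalent_semidagger_2}$\Rightarrow$\ref{equivalent_semidagger_3} is immediate: take $c=d=1$. For \ref{equivalent_semidagger_3}$\Rightarrow$\ref{equivalent_semidagger_1}, observe that the identity map $A\to\ling{A}$ is bounded, so any $S\in\bdd_A$ is also bounded in $\ling{A}$; since $\ling{A}$ is semi-dagger, Proposition~\ref{pro:semi-dagger_criterion} shows that $\sum_{i=0}^\infty\dvgen^i S^{ci+d}$ is bounded in $\ling{A}$ whenever $d\ge1$, hence so is any subset $T$ of it.

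The real content is \ref{equivalent_semidagger_1}$\Rightarrow$\ref{equivalent_semidagger_2}. I would introduce the collection $\bdd'_A$ of all subsets $T\subseteq A$ for which there is $S\in\bdd_A$ with $T\subseteq\sum_{i=0}^\infty\dvgen^i S^{i+1}$; enlarging $S$ to the bounded $\dvr$-submodule it generates, one may always take $S$ to be a bounded $\dvr$-submodule. The goal is to show that $\bdd'_A$ is a semi-dagger $\dvr$-algebra bornology on $A$ containing $\bdd_A$. Granting this, the minimality property defining the linear growth bornology forces $\bdd_{\ling{A}}\subseteq\bdd'_A$, which is exactly \ref{equivalent_semidagger_1}$\Rightarrow$\ref{equivalent_semidagger_2}. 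That $\bdd'_A$ is a bornology on the $\dvr$-module $A$ and contains $\bdd_A$ is routine: every $S\in\bdd_A$ satisfies $S=\dvgen^0 S^1\subseteq\sum_i\dvgen^i S^{i+1}$; subsets of $\bdd'_A$-sets lie in $\bdd'_A$; for a finite union $T_1\cup T_2$ one replaces the witnessing submodules $S_1,S_2$ by the bounded submodule $S_1+S_2$; and each witnessing set $\sum_i\dvgen^i S^{i+1}$ is itself a bounded $\dvr$-submodule in $\bdd'_A$, so every $\bdd'_A$-bounded set sits inside a $\bdd'_A$-bounded submodule.

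The two points needing care are closure under multiplication and the semi-dagger property. For multiplication, let $T_1,T_2\subseteq U\defeq\sum_{j\ge0}\dvgen^j S^{j+1}$ with $S$ a bounded submodule; expanding products and collecting powers gives $U\cdot U=\sum_{m\ge0}\dvgen^m S^{m+2}$. Writing $S'$ for the bounded submodule generated by $S\cup S^2$, one has $S^{m+2}\subseteq (S')^{m+1}$ for every $m\ge0$, hence $T_1T_2\subseteq\sum_{m\ge0}\dvgen^m(S')^{m+1}\in\bdd'_A$, so $\bdd'_A$ makes $A$ into a bornological $\dvr$-algebra. For the semi-dagger property I would invoke Proposition~\ref{pro:semi-dagger_criterion}: if $T\subseteq U=\sum_{j\ge0}\dvgen^j S^{j+1}$, the analogous expansion yields $U^{i+1}=\sum_{J\ge0}\dvgen^J S^{J+i+1}$, whence
\[
\dvgen^i T^{i+1}\subseteq\dvgen^i U^{i+1}=\sum_{J\ge0}\dvgen^{J+i}S^{J+i+1}=\sum_{m\ge i}\dvgen^m S^{m+1}\subseteq U
\]
after reindexing $m=J+i$. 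Summing over $i$ gives $\sum_{i=0}^\infty\dvgen^i T^{i+1}\subseteq U\in\bdd'_A$, so $\bdd'_A$ is semi-dagger by Proposition~\ref{pro:semi-dagger_criterion}, and the proof is complete.

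I expect the only real obstacle to be bookkeeping: keeping the $\dvgen$-exponents and $S$-exponents synchronised through the reindexings, and in particular noticing that one must pass from $S$ to $S'=S+S^2$ so that the product of two $\bdd'_A$-sets closes up inside $\bdd'_A$ rather than merely inside a $\dvgen$-divisible overmodule of $U$. With the candidate $\bdd'_A$ and its minimality role identified, everything else is elementary and relies only on Proposition~\ref{pro:semi-dagger_criterion}.
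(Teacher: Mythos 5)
Your proposal is correct and follows essentially the same route as the paper, which defers to Cortiñas--Cuntz--Meyer--Tamme: one checks that the subsets satisfying \ref{equivalent_semidagger_2} form a semi-dagger algebra bornology containing $\bdd_A$ and then invokes minimality of the linear growth bornology, with Proposition~\ref{pro:semi-dagger_criterion} handling \ref{equivalent_semidagger_3}$\Rightarrow$\ref{equivalent_semidagger_1}. Your explicit verifications (the reindexing $U^{i+1}=\sum_{J}\dvgen^J S^{J+i+1}$ and the passage to $S'=S+S^2$ for multiplicativity) are exactly the bookkeeping the cited Lemma~3.1.10 carries out.
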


More precisely, the proof of
\cite{Cortinas-Cuntz-Meyer-Tamme:Nonarchimedean}*{Lemma~3.1.10} shows
that the subsets in~\ref{equivalent_semidagger_2} form a semi-dagger
bornology that contains~\(\bdd_A\).
And Proposition~\ref{pro:semi-dagger_criterion} shows that these
subsets are bounded in any semi-dagger bornology on~\(A\)
that contains~\(\bdd_A\).
So they form the smallest semi-dagger bornology containing~\(\bdd_A\).

By definition, the algebra~\(\ling{A}\)
has the following universal property: if~\(B\)
is a semi-dagger \(\dvr\)\nb-algebra,
then an algebra homomorphism \(A \to B\)
is bounded if and only if it is bounded on~\(\ling{A}\).
The algebra~\(A\)
is semi-dagger if and only if \(A=\ling{A}\).

\begin{theorem}
  \label{the:extension_lgb}
  Let \(A \xrightarrow{i} B \xrightarrow{q} C\)
  be an extension of bornological \(\dvr\)\nb-algebras.
  Then~\(B\)
  is a semi-dagger algebra if and only if both \(A\)
  and~\(C\) are.
\end{theorem}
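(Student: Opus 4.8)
The plan is to prove both implications of the equivalence separately, using the concrete description of the linear growth bornology from Lemma~\ref{equivalent_semidagger} together with the criterion in Proposition~\ref{pro:semi-dagger_criterion}.

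First I would treat the easy direction: if~\(B\) is semi-dagger, then so are \(A\) and~\(C\). For the subalgebra~\(A\), given \(S \in \bdd_A\), I would view \(S\) as a bounded subset of~\(B\), where it is contained in a bounded \(\dvr\)\nb-submodule \(U \subseteq B\) with \(\dvgen \cdot U \cdot U \subseteq U\). Then \(U \cap i(A)\) — or rather \(i^{-1}(U)\), using that \(i\) is a bornological embedding — is a bounded \(\dvr\)\nb-submodule of~\(A\) with the same property, so~\(A\) is semi-dagger by Proposition~\ref{pro:semi-dagger_criterion}. For the quotient~\(C\), given \(S'' \in \bdd_C\), lift it to \(S \in \bdd_B\) with \(q(S) = S''\); since \(\sum_{i=0}^\infty \dvgen^i S^{i+1}\) is bounded in~\(B\) and \(q\) is a bornological quotient map, its image \(\sum_{i=0}^\infty \dvgen^i (S'')^{i+1}\) is bounded in~\(C\), so~\(C\) is semi-dagger.

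The substantive direction is the converse: assuming \(A\) and~\(C\) are semi-dagger, show~\(B\) is. Fix \(S \in \bdd_B\); without loss of generality \(S\) is a bounded \(\dvr\)\nb-submodule. Then \(q(S) \in \bdd_C\), so since~\(C\) is semi-dagger there is a bounded \(\dvr\)\nb-submodule \(V \subseteq C\) with \(q(S) \subseteq V\) and \(\dvgen V V \subseteq V\). Lift \(V\) to a bounded \(\dvr\)\nb-submodule \(W \subseteq B\) with \(q(W) = V\); enlarging, assume \(S \subseteq W\). The idea is that modulo \(i(A)\), powers of \(\dvgen^i S^{i+1}\) stay inside \(W\), so the "error terms" landing in \(i(A)\) are what we must control. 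Concretely, I expect \(\dvgen \cdot W \cdot W\) to be contained in \(W + i(A_0)\) for some bounded \(\dvr\)\nb-submodule \(A_0 \subseteq A\): indeed \(q(\dvgen W W) \subseteq \dvgen V V \subseteq V = q(W)\), so \(\dvgen W W \subseteq W + \ker q = W + i(A)\), and since \(\dvgen W W\) is bounded and \(i\) is a bornological embedding, the part in \(i(A)\) is \(i\) of a bounded set \(A_0 \subseteq A\). Now \(A\) is semi-dagger, so \(A_0\) sits inside a bounded \(\dvr\)\nb-submodule \(U_A \subseteq A\) with \(\dvgen U_A U_A \subseteq U_A\); I would also want \(U_A\) to absorb products with \(W\) suitably, e.g.\ arrange \(\dvgen \cdot W \cdot i(U_A) \subseteq i(U_A) + W\) and similarly on the other side, again by a lifting-and-boundedness argument. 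Then set \(U \defeq W + i(U_A)\): this is a bounded \(\dvr\)\nb-submodule of~\(B\), it contains \(S\), and a direct computation using \(i(U_A) \cdot i(U_A) \subseteq i(U_A U_A)\), the three absorption relations, and \(\dvgen \cdot W \cdot W \subseteq W + i(U_A)\) should give \(\dvgen \cdot U \cdot U \subseteq U\). By Proposition~\ref{pro:semi-dagger_criterion}, \(B\) is semi-dagger.

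The main obstacle I anticipate is the bookkeeping needed to choose \(U_A\) large enough to absorb all the cross terms \(\dvgen \cdot W \cdot i(U_A)\), \(\dvgen \cdot i(U_A) \cdot W\), and \(\dvgen \cdot i(A_0) \cdot W\) simultaneously, while keeping it bounded and closed under \(\dvgen U_A U_A \subseteq U_A\). Each individual absorption follows from the same pattern — apply \(q\) to see the product lands in \(W + i(A)\), then use that \(i\) is a bornological embedding to pull out a bounded piece of~\(A\) — but one must be careful that enlarging \(U_A\) to handle one cross term does not spoil another. This can be managed by first collecting all the relevant bounded subsets of~\(A\) (the various pulled-back pieces) into one bounded set~\(A_0'\), and only then invoking the semi-dagger property of~\(A\) once to get a single \(U_A \supseteq A_0'\) with \(\dvgen U_A U_A \subseteq U_A\); since the semi-dagger property of~\(A\) is exactly closure under such products, the finitely many absorption conditions all reduce to containments among \(\dvgen\)\nb-weighted products of \(U_A\), \(W\), and their images, which hold by construction.
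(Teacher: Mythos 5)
The forward direction is fine and essentially matches the paper. The gap is in the converse, and it is exactly the circularity you flag at the end but do not actually resolve. Your candidate is \(U = W + i(U_A)\), and to get \(\dvgen\cdot U\cdot U\subseteq U\) you must absorb the cross terms \(\dvgen\cdot W\cdot i(U_A)\) and \(\dvgen\cdot i(U_A)\cdot W\). These lie in the ideal \(i(A)\) and pull back to a bounded subset \(i^{-1}\bigl(\dvgen\cdot W\cdot i(U_A)\bigr)\) of \(A\) -- but this subset \emph{depends on \(U_A\)}, so it cannot be included in the collection \(A_0'\) that you feed into the semi-dagger property of \(A\) ``once, at the start'': \(A_0'\) must be chosen before \(U_A\) exists. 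The semi-dagger property of \(A\) only gives closure of \(U_A\) under \(\dvgen\)-weighted products with \emph{itself}; it gives no control over products of \(U_A\) with elements of \(W\) lying outside \(i(A)\). Iterating (enlarge \(U_A\) to absorb the new cross terms, which creates yet newer ones) produces an increasing sequence of bounded submodules of \(A\) whose union need not be bounded. So the containment \(\dvgen\cdot W\cdot i(U_A)\subseteq W+i(U_A)\) is not something you can ``arrange''; it is the whole difficulty.

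The paper circumvents this with two devices you are missing. First, it does not lift \(q(S)\) itself but its multiplicative closure \(S_2=\sum_{l\ge1} q(\dvgen S^j)^l\) in \(C\) (bounded because \(C\) is semi-dagger); the lift \(T\) then satisfies \(T^2\subseteq T+\Omega\) with a \emph{single, fixed} bounded error set \(\Omega\subseteq i(A)\), chosen before any appeal to the semi-dagger property of \(A\). Second, the absorbing submodule is not of the form \(T+i(U_A)\) but
\(U=\sum_{n\ge1}(\dvgen\Omega)^n+\sum_{n\ge0}T\cdot(\dvgen\Omega)^n\):
the mixed terms \(T\cdot(\dvgen\Omega)^n\) are put into \(U\) by hand precisely so that left multiplication by \(\dvgen T\) maps \(U\) into itself (using \(T^2\subseteq T+\Omega\)), and only the purely \(\Omega\)-part \(\sum(\dvgen\Omega)^n\) needs the semi-dagger property of \(A\) for its boundedness. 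One then gets \((\dvgen T)^n\cdot U\subseteq U\) by induction and concludes via Lemma~\ref{lgb_equivalence}. If you want to salvage your set-up, you would have to enlarge \(i(U_A)\) to an ideal-like piece closed under multiplication by \(W\) on (at least) one side -- which is exactly what the terms \(T\cdot(\dvgen\Omega)^n\) accomplish.
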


\begin{proof}
  First assume~\(B\)
  to be semi-dagger.  Let \(S\in\bdd_A\).
  Then \(\sum_{j=0}^\infty \dvgen^j i(S)^{j+1}\)
  is bounded in~\(B\).
  Since~\(i\)
  is a bornological embedding, it follows that
  \(\sum_{j=0}^\infty \dvgen^j S^{j+1}\)
  is bounded in~\(A\).
  That is, \(\varrho_1(S;\bdd_A) = 1\).
  So~\(A\)
  is semi-dagger.  Now let \(S\in\bdd_C\).
  Since~\(q\)
  is a bornological quotient map, there is \(T\in\bdd_B\)
  with \(q(T) = S\).
  The subset \(\sum_{j=0}^\infty \dvgen^j T^{j+1}\)
  is bounded in~\(B\)
  because~\(B\)
  is semi-dagger.  Its image under~\(q\)
  is also bounded, and this is \(\sum_{j=0}^\infty \dvgen^j S^{j+1}\).
  So \(\varrho_1(S;\bdd_C) = 1\) and~\(C\) is semi-dagger.

  Now assume that \(A\)
  and~\(C\)
  are semi-dagger.
  We show that \(\sum_{l=1}^\infty(\dvgen^2 S^j)^l\)
  is bounded in~\(B\)
  for all \(S\in\bdd_B\),
  \(j\in\N_{\ge1}\).
  This implies \(\varrho_1(S;\bdd_B)=1\)
  by Lemma~\ref{lgb_equivalence}.

  Since~\(C\)
  is semi-dagger, \(\varrho_1(q(S);\bdd_C)= 1\).
  Thus \(S_2 \defeq \sum_{l=1}^\infty q(\dvgen S^j)^l\)
  is bounded in~\(C\)
  by Lemma~\ref{lgb_equivalence}.  Since~\(q\)
  is a quotient map, there is \(T \in\bdd_B\)
  with \(q(T) = S_2\).
  We may choose~\(T\)
  with \(\dvgen S^j \subseteq T\).
  For each \(x,y \in T\),
  we have \(q(x\cdot y) \in S_2 \cdot S_2 \subseteq S_2 = q(T)\).
  Hence there is \(\omega(x,y) \in T\)
  with \(x\cdot y - \omega(x,y) \in i(A)\).  Let
  \[
  \Omega \defeq \setgiven{x\cdot y - \omega(x,y)}{x,y\in T}.
  \]
  This is contained in \(T^2 - T\).
  So \(\Omega\in\bdd_B\).
  And \(T^2 \subseteq T + \Omega\).
  By construction, \(\Omega\)
  is also contained in~\(i(A)\).
  Since~\(i\)
  is a bornological embedding, \(i^{-1}(\Omega)\)
  is bounded in~\(A\).
  Since~\(A\)
  is semi-dagger, we have \(\varrho_1(i^{-1}(\Omega); \bdd_A) = 1\).
  So \(\sum_{n=1}^\infty (\dvgen \cdot \Omega)^n\)
  is bounded.  Thus the subset
  \[
  U \defeq \sum_{n=1}^\infty (\dvgen \cdot \Omega)^n
  + \sum_{n=0}^\infty T \cdot (\dvgen \cdot \Omega)^n
  = \sum_{n=1}^\infty (\dvgen \cdot \Omega)^n
  + T + \sum_{n=1}^\infty T \cdot (\dvgen \cdot \Omega)^n
  \]
  of~\(B\)
  is bounded.  Using \(T^2 \subseteq T + \Omega\),
  we prove that \(\dvgen T\cdot U \subseteq U\).
  Hence \((\dvgen T)^n\cdot U \subseteq U\)
  for all \(n\in\N_{\ge1}\)
  by induction.  Since \(T \subseteq U\),
  this implies \(\sum_{l=1}^\infty \dvgen^l T^{l+1} \subseteq U\).
  Hence
  \(\sum_{l=2}^\infty (\dvgen T)^l = \dvgen \cdot \sum_{l=1}^\infty
  \dvgen^l T^{l+1} \subseteq \dvgen U\).
  Therefore, \(\sum_{l=1}^\infty (\dvgen T)^l\)
  is bounded.  Since \(\dvgen^2 S^j \subseteq \dvgen T\),
  it follows that \( \sum_{l=1}^\infty (\dvgen^2 S^j)^l\)
  is bounded for all~\(j\), as desired.
\end{proof}

\begin{proposition}[\cite{Cortinas-Cuntz-Meyer-Tamme:Nonarchimedean}*{Lemma 3.1.12}]
  \label{pro:completion_semidagger}
  If~\(A\) is semi-dagger, then so is its completion~\(\comb{A}\).
\end{proposition}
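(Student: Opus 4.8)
The plan is to use the explicit description of the linear growth bornology from Lemma~\ref{equivalent_semidagger} together with the concrete model of the completion, and to reduce the statement to checking a boundedness condition for the images of the generating sets. Since $A$ is semi-dagger, we have $A = \ling{A}$, so it suffices to show that $\comb{A}$ is semi-dagger, i.e.\ by Proposition~\ref{pro:semi-dagger_criterion} that every bounded subset of $\comb{A}$ is contained in a bounded $\dvr$\nb-submodule $U$ with $\dvgen\cdot U\cdot U\subseteq U$.

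First I would recall the structure of the completion: a bounded subset of $\comb{A}$ is, up to enlargement, the closure $\comb{S}$ of the image of a bounded $\dvr$\nb-submodule $S\in\bdd_A$ in a $\dvgen$\nb-adically complete bounded submodule. So let $S\in\bdd_A$ be a bounded $\dvr$\nb-submodule; I want a bounded submodule of $\comb{A}$ containing the closure of the image of $S$ and closed under the operation $u,v\mapsto \dvgen uv$. Because $A$ is semi-dagger, Proposition~\ref{pro:semi-dagger_criterion} gives a bounded $\dvr$\nb-submodule $U_0\subseteq A$ with $S\subseteq U_0$ and $\dvgen\cdot U_0\cdot U_0\subseteq U_0$. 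The key point is that this inclusion is preserved under taking closures: if $U\subseteq \comb{A}$ denotes the closure of the image of $U_0$, then $U$ is a bounded $\dvr$\nb-submodule of $\comb{A}$ (the closure of a bounded submodule inside a $\dvgen$\nb-adically complete bounded submodule is again bounded), it contains the image of $S$, hence its closure, and multiplication $\comb{A}\times\comb{A}\to\comb{A}$ is bounded, in particular separately continuous for the canonical topologies, so $\dvgen\cdot U\cdot U$ is contained in the closure of $\dvgen\cdot U_0\cdot U_0\subseteq U_0$, which is $U$. Thus $\dvgen\cdot U\cdot U\subseteq U$, and by Proposition~\ref{pro:semi-dagger_criterion} again, $\comb{A}$ is semi-dagger.

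The one point that needs care—and is the main obstacle—is the claim that the closure of a bounded $\dvr$\nb-submodule of $\comb{A}$ is again bounded. This is where completeness of $\comb{A}$ is used: by the characterisation recalled in Section~\ref{sec:complete}, every bounded subset of $\comb{A}$ lies in a $\dvgen$\nb-adically complete bounded $\dvr$\nb-submodule $W$, and the closure in $\comb{A}$ of a $\dvr$\nb-submodule contained in $W$ agrees with its $\dvgen$\nb-adic closure inside $W$, which is a $\dvr$\nb-submodule of $W$ and hence bounded. I would also need to verify that multiplication on $\comb{A}$ respects these closures in the required one-sided sense; this follows because bounded maps send convergent sequences to convergent sequences, so for fixed $u$ in the closure of $U_0$, the map $v\mapsto \dvgen uv$ sends the closure of $U_0$ into the closure of $\dvgen u U_0\subseteq U_0$, and then symmetrically in $u$. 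Assembling these observations gives the result; since the statement is quoted from \cite{Cortinas-Cuntz-Meyer-Tamme:Nonarchimedean}*{Lemma 3.1.12}, I would expect the proof there to proceed along essentially these lines, possibly phrased directly in terms of the explicit completion functor rather than via closures.
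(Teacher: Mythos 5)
The paper offers no proof of this proposition; it simply cites \cite{Cortinas-Cuntz-Meyer-Tamme:Nonarchimedean}*{Lemma~3.1.12}. Your overall strategy is the right one and matches the standard argument: reduce to the criterion of Proposition~\ref{pro:semi-dagger_criterion}, note that a bounded subset of \(\comb{A}\) lies in the image of the \(\dvgen\)\nb-adic completion of some bounded \(\dvr\)\nb-submodule \(S\subseteq A\), enlarge \(S\) to a bounded submodule \(U_0\) with \(\dvgen\cdot U_0\cdot U_0\subseteq U_0\), and transport this multiplicative property to the completion.

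One step, however, is false as written. You assert that the closure in \(\comb{A}\) of a submodule contained in a \(\dvgen\)\nb-adically complete bounded submodule \(W\) agrees with its \(\dvgen\)\nb-adic closure inside \(W\), and hence is bounded. Closure in \(\comb{A}\) refers to bornological convergence, and a sequence in the image of \(U_0\) may converge in \(\comb{A}\) with respect to a bounded set that is not contained in \(W\); Example~\ref{exa:quotient_complete_by_non-closed} exhibits a complete bounded submodule of a complete module that is not closed, so the two closures genuinely differ, and the closure in \(\comb{A}\) of a bounded submodule need not be bounded. The repair is to avoid topological closures in \(\comb{A}\) altogether: take \(U\) to be the image in \(\comb{A}\) of the \(\dvgen\)\nb-adic completion \(\coma{U_0}\). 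This is a bounded \(\dvr\)\nb-submodule by the construction of \(\comb{A}\) in Proposition~\ref{completions_exist}, it contains the image of \(\coma{S}\) and hence the given bounded subset, and since \((u,v)\mapsto\dvgen\cdot u\cdot v\) maps \(U_0\times U_0\) into \(U_0\) and is \(\dvr\)\nb-bilinear, hence \(\dvgen\)\nb-adically uniformly continuous, it extends to a map \(\coma{U_0}\times\coma{U_0}\to\coma{U_0}\) compatible with the multiplication of \(\comb{A}\). This yields \(\dvgen\cdot U\cdot U\subseteq U\) directly, with no appeal to separate continuity or to boundedness of closures.
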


Let~\(\comling{A}\)
be the completion of~\(\ling{A}\).
This algebra is both complete and semi-dagger by
Proposition~\ref{pro:completion_semidagger}.  The canonical bounded
homomorphism \(A\to \comling{A}\)
is the universal arrow from~\(A\)
to a complete semi-dagger algebra, that is, any bounded homomorphism
\(A\to B\)
for a complete semi-dagger algebra~\(B\)
factors uniquely through it.  This follows immediately from the
universal properties of the linear growth bornology and the
completion.

\section{Bornological torsion-freeness}
\label{sec:born_tf}

Let~\(M\)
be a bornological \(\dvr\)\nb-module.
The bounded linear map \(\dvgen_M\colon M\to M\),
\(m\mapsto \dvgen\cdot m\), is defined in~\eqref{eq:mult_dvgen}.

\begin{definition}
  A bornological \(\dvr\)\nb-module~\(M\)
  is \emph{bornologically torsion-free} if~\(\dvgen_M\)
  is a bornological embedding.  Equivalently, \(\dvgen \cdot m = 0\)
  for \(m\in M\)
  only happens for \(m = 0\)
  and any bounded subset of~\(M\)
  that is containd in \(\dvgen \cdot M\)
  is of the form \(S = \dvgen \cdot T\) for some \(T \in \bdd_M\).
\end{definition}

Bornological \(\dvf\)\nb-vector
spaces are bornologically torsion-free because bornological
isomorphisms are bornological embeddings.  We are going to show
that~\(M\)
is bornologically torsion-free if and only if the canonical map
\(\iota_M\colon M\to \dvf\otimes M\)
defined in Lemma~\ref{lem:universal_to_vector_space} is a bornological
embedding.  The proof uses the following easy permanence property:

\begin{lemma}
  \label{sub_tf}
  Let~\(M\)
  be a bornological \(\dvr\)\nb-module
  and let \(N\subseteq M\)
  be a \(\dvr\)\nb-submodule
  with the subspace bornology.  If~\(M\)
  is bornologically torsion-free, then so is~\(N\).
\end{lemma}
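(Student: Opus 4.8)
The plan is to verify directly the two conditions that make~\(N\), with the subspace bornology, bornologically torsion-free: that \(\dvgen_N\) is injective, and that every bounded subset of~\(N\) contained in \(\dvgen\cdot N\) is of the form \(\dvgen\cdot T\) for some \(T\in\bdd_N\). Injectivity is immediate, since \(\dvgen\cdot n=0\) with \(n\in N\) forces \(n=0\) already in~\(M\), where \(\dvgen_M\) is injective.

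For the second condition, let \(S\in\bdd_N\) with \(S\subseteq\dvgen\cdot N\). The subspace bornology on~\(N\) consists exactly of the subsets of~\(N\) that are bounded in~\(M\), and \(\dvgen\cdot N\subseteq\dvgen\cdot M\); hence \(S\) is a bounded subset of~\(M\) contained in \(\dvgen\cdot M\), and bornological torsion-freeness of~\(M\) produces \(T\in\bdd_M\) with \(\dvgen\cdot T=S\). Because \(\dvgen_M\) is injective, each element of~\(S\) has a unique \(\dvgen\)\nb-preimage in~\(M\), so \(T=\dvgen_M^{-1}(S)\) is forced. Now for \(t\in T\) we have \(\dvgen\cdot t\in S\subseteq\dvgen\cdot N\), so \(\dvgen\cdot t=\dvgen\cdot n\) for some \(n\in N\), whence \(t=n\in N\) by injectivity of \(\dvgen_M\). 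Thus \(T\subseteq N\), so \(T\) is bounded in~\(N\) for the subspace bornology, and \(\dvgen\cdot T=S\), as needed.

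I expect no serious obstacle; the only point requiring care is that the bounded preimage~\(T\) supplied by torsion-freeness of~\(M\) actually lands inside~\(N\), and this is precisely where the injectivity half of the hypothesis enters. Equivalently, one can package the argument conceptually: the subspace bornology is transitive, and \(\dvgen_N\) is the restriction of the bornological embedding \(\dvgen_M\) to the \(\dvgen\)\nb-invariant submodule~\(N\), its image being \(\dvgen\cdot N\) with the bornology induced from \(\dvgen\cdot M\subseteq M\); a restriction of a bornological embedding to a sub-object, with the induced bornologies on source and target, is again a bornological embedding.
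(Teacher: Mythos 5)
Your proof is correct, but your main argument takes a more hands-on route than the paper's. The paper's proof is a two-line formal argument: with \(j\colon N\to M\) the inclusion (a bornological embedding by definition of the subspace bornology), the composite \(\dvgen_M\circ j = j\circ \dvgen_N\) is a bornological embedding, and since \(j\) is one, so is \(\dvgen_N\). You instead verify the equivalent element-wise characterisation directly: injectivity of \(\dvgen_N\), plus the statement that a bounded \(S\subseteq\dvgen\cdot N\) equals \(\dvgen\cdot T\) for a bounded \(T\subseteq N\). Your key step --- that the \(T\in\bdd_M\) supplied by torsion-freeness of \(M\) is forced to be \(\dvgen_M^{-1}(S)\) and hence lands in \(N\) --- is exactly right and is where injectivity of \(\dvgen_M\) does real work; without it the preimage module could stick out of \(N\). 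The paper's version is shorter and generalises immediately to any bornological embedding in place of \(\dvgen_M\); yours makes visible precisely which concrete bounded set witnesses the conclusion. Your closing remark about restricting a bornological embedding to a sub-object is, in essence, the paper's proof, so the two approaches meet in the end.
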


\begin{proof}
  Let \(j\colon N\to M\)
  be the inclusion map, which is a bornological embedding by
  assumption.  Since~\(\dvgen_M\)
  is a bornological embedding, so is
  \(\dvgen_M\circ j = j\circ \dvgen_N\).
  Since~\(j\)
  is a bornological embedding, this implies that~\(\dvgen_N\)
  is a bornological embedding.  That is, \(N\)
  is bornologically torsion-free.
\end{proof}

\begin{proposition}
  \label{pro:bornological_torsion_characterisation}
  A bornological \(\dvr\)\nb-module~\(M\)
  is bornologically torsion-free if and only if the canonical map
  \(\iota_M\colon M \to \dvf\otimes M\) is a bornological embedding.
\end{proposition}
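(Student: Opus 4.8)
The plan is to prove the two implications separately. For the easy direction, suppose $\iota_M \colon M \to \dvf\otimes M$ is a bornological embedding. Since $\dvf\otimes M$ is a bornological $\dvf$-vector space, it is bornologically torsion-free, as noted just before Lemma~\ref{sub_tf}. But $\iota_M(M)$ carries the subspace bornology from $\dvf\otimes M$ by hypothesis, so Lemma~\ref{sub_tf} applies: a submodule of a bornologically torsion-free module, with the subspace bornology, is again bornologically torsion-free. Hence $\iota_M(M) \cong M$ is bornologically torsion-free. (One should note in passing that $\iota_M$ being a bornological embedding forces it to be injective, so $M \to \iota_M(M)$ is a bornological isomorphism and the transport of structure is legitimate.)

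For the converse, assume $M$ is bornologically torsion-free. First I would check that $\iota_M$ is injective: the kernel of $M \to \dvf\otimes M$ is the $\dvgen$-torsion submodule $\bigcup_k \ker(\dvgen_M^k)$, and bornological torsion-freeness gives $\ker \dvgen_M = 0$, hence by iteration $\ker \dvgen_M^k = 0$ for all $k$, so $\iota_M$ is injective. It remains to show that a subset $T \subseteq M$ is bounded in $M$ whenever $\iota_M(T)$ is bounded in $\dvf\otimes M$ (boundedness of $\iota_M$ itself is already known). Recall from the proof of Lemma~\ref{lem:universal_to_vector_space} that any bounded submodule of $\dvf\otimes M$ is contained in $\dvgen^{-k}\dvr \otimes S = \dvgen_{\dvf\otimes M}^{-k}(\iota_M(S))$ for some $S \in \bdd_M$ and some $k \in \N$. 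So, enlarging $T$ to the submodule it generates, we may assume $\iota_M(T) \subseteq \dvgen^{-k}\dvr\otimes S$, i.e. $\dvgen^k \cdot \iota_M(T) \subseteq \iota_M(S)$ inside $\dvf\otimes M$. Since $\iota_M$ is injective and $\dvr$-linear, this says $\dvgen^k \cdot T \subseteq S$ as subsets of $M$ (pulling back along $\iota_M$, using that $\iota_M(\dvgen^k T) = \dvgen^k \iota_M(T)$ and $\iota_M^{-1}(\iota_M(S)) = S$ by injectivity). Thus $\dvgen^k \cdot T$ is a bounded subset of $M$.

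Now I would peel off the factors of $\dvgen$ one at a time using bornological torsion-freeness. The set $\dvgen^k T = \dvgen \cdot (\dvgen^{k-1}T)$ is bounded and contained in $\dvgen \cdot M$, so by the defining property of bornological torsion-freeness it equals $\dvgen \cdot T'$ for some $T' \in \bdd_M$; and since $\dvgen_M$ is injective, $T' = \dvgen^{k-1} T$, so $\dvgen^{k-1}T$ is bounded. Iterating $k$ times shows $T$ itself is bounded in $M$. This establishes that $\iota_M \colon M \to \iota_M(M)$ is a bornological isomorphism, i.e.\ $\iota_M$ is a bornological embedding. The main obstacle, such as it is, lies in this last step: one must be careful that the "divide by $\dvgen$" operation is well-defined and bounded, which is exactly what the second half of the definition of bornological torsion-freeness supplies, together with injectivity of $\dvgen_M$ to pin down the quotient uniquely. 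Everything else is bookkeeping with the structure of bounded subsets of $\dvf\otimes M$ already laid out in Lemma~\ref{lem:universal_to_vector_space}.
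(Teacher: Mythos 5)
Your proposal is correct and follows essentially the same route as the paper: the easy direction via Lemma~\ref{sub_tf} applied to $\iota_M(M)\subseteq\dvf\otimes M$, and the converse by locating a bounded subset of $\dvf\otimes M$ inside $\dvgen^{-k}\dvr\otimes S$ and then undoing multiplication by $\dvgen^k$. Your "peel off one factor of $\dvgen$ at a time" step is just the unwound form of the paper's remark that $\dvgen_M^k$ is a bornological embedding by induction.
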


\begin{proof}
  As a bornological \(\dvf\)\nb-vector
  space, \(\dvf\otimes M\)
  is bornologically torsion-free.  Hence~\(M\)
  is bornologically torsion-free by Lemma~\ref{sub_tf} if~\(\iota_M\)
  is a bornological embedding.  Conversely, assume that~\(M\)
  is bornologically torsion-free.  The map~\(\iota_M\)
  is injective because~\(M\)
  is algebraically torsion-free.  It remains to show that a
  subset~\(S\)
  of~\(M\)
  is bounded if \(\iota_M(S)\subseteq \dvf\otimes M\)
  is bounded.  If~\(\iota_M(S)\)
  is bounded, then it is contained in \(\dvgen^{-k}\cdot \dvr\otimes T\)
  for some \(k\in\N\)
  and some \(T\in\bdd_M\).
  Equivalently, \(\dvgen_M^k(S) = \dvgen^k\cdot S\)
  is bounded in~\(M\).
  Since~\(\dvgen_M\)
  is a bornological embedding, induction shows that
  \(\dvgen_M^k\colon M\to M\),
  \(m\mapsto \dvgen^k\cdot m\),
  is a bornological embedding as well.  So the boundedness of
  \(\dvgen_M^k(S)\) implies that~\(S\) is bounded.
\end{proof}

\begin{proposition}
  \label{universal_torsionfree}
  Let \(\torf{M} \defeq \iota_M(M)\subseteq \dvf\otimes M\)
  equipped with the subspace bornology and the surjective bounded
  linear map \(\iota_M\colon M\to \torf{M}\).
  This is the universal arrow from~\(M\)
  to a bornologically torsion-free module, that is, any bounded linear
  map \(f\colon M\to N\)
  into a bornologically torsion-free module~\(N\)
  factors uniquely through a bounded linear map
  \(f^\#\colon \torf{M} \to N\).
\end{proposition}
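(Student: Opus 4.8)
The plan is to combine the universal property of the localisation $\dvf\otimes M$ from Lemma~\ref{lem:universal_to_vector_space} with the characterisation of bornological torsion-freeness in Proposition~\ref{pro:bornological_torsion_characterisation}; these two cited results do essentially all the work, so the argument is a formal diagram chase.

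First I would record that $\torf{M}$ really is bornologically torsion-free: it is a $\dvr$\nb-submodule of the bornological $\dvf$\nb-vector space $\dvf\otimes M$ equipped with the subspace bornology, and bornological $\dvf$\nb-vector spaces are bornologically torsion-free, so Lemma~\ref{sub_tf} applies. Moreover $\iota_M\colon M\to\torf{M}$ is bounded and surjective by construction, so $\torf{M}$ is a legitimate candidate for the universal arrow.

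Next I would construct the factorisation. Let $f\colon M\to N$ be a bounded linear map into a bornologically torsion-free module~$N$. By Proposition~\ref{pro:bornological_torsion_characterisation}, $\iota_N\colon N\to\dvf\otimes N$ is a bornological embedding, so $N\to\iota_N(N)$ is a bornological isomorphism; write $\iota_N^{-1}\colon\iota_N(N)\to N$ for its bounded inverse. The composite $\iota_N\circ f\colon M\to\dvf\otimes N$ has target a bornological $\dvf$\nb-vector space, so by Lemma~\ref{lem:universal_to_vector_space} it factors as $\iota_N\circ f = g\circ\iota_M$ for a (unique) bounded $\dvf$\nb-linear map $g\colon\dvf\otimes M\to\dvf\otimes N$. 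For $m\in M$ we then have $g(\iota_M(m))=\iota_N(f(m))\in\iota_N(N)$, so $g$ sends the subset $\torf{M}=\iota_M(M)$ into $\iota_N(N)$; since $g$ is bounded and $\torf{M}$ and $\iota_N(N)$ carry the subspace bornologies inherited from $\dvf\otimes M$ and $\dvf\otimes N$, the restriction $g|_{\torf{M}}\colon\torf{M}\to\iota_N(N)$ is bounded. I would then set $f^\#\defeq\iota_N^{-1}\circ g|_{\torf{M}}\colon\torf{M}\to N$, which is bounded as a composite of bounded maps and satisfies $f^\#\circ\iota_M=\iota_N^{-1}\circ\iota_N\circ f=f$.

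Finally, uniqueness is immediate: since $\iota_M\colon M\to\torf{M}$ is surjective, two bounded linear maps out of $\torf{M}$ that agree after precomposition with $\iota_M$ must coincide. The only point requiring a moment's care is verifying that $g$ genuinely lands in $\iota_N(N)$ rather than merely in $\dvf\otimes N$, which is exactly where the identity $g\circ\iota_M=\iota_N\circ f$ supplied by Lemma~\ref{lem:universal_to_vector_space} is used; beyond that I do not expect any real obstacle.
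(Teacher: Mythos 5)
Your proposal is correct and follows essentially the same route as the paper: establish that $\torf{M}$ is bornologically torsion-free via Lemma~\ref{sub_tf}, obtain the $\dvf$\nb-linear extension of $\iota_N\circ f$ from Lemma~\ref{lem:universal_to_vector_space}, restrict it to the subspace bornologies, and deduce uniqueness from the surjectivity of $\iota_M$. The only cosmetic difference is that you explicitly invert the bornological isomorphism $N\to\iota_N(N)$ where the paper silently identifies $N$ with its image.
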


\begin{proof}
  Since \(\dvf\otimes M\)
  is bornologically torsion-free as a bornological \(\dvf\)\nb-vector
  space, \(\torf{M}\)
  is bornologically torsion-free as well by Lemma~\ref{sub_tf}.  We
  prove the universality of the canonical map
  \(\iota_M\colon M\to \torf{M}\).
  Let~\(N\)
  be a bornologically torsion-free \(\dvr\)\nb-module
  and let \(f\colon M \to N\)
  be a bornological \(\dvr\)\nb-module
  map.  Then \(N \injto \dvf\otimes N\)
  is a bornological embedding by
  Proposition~\ref{pro:bornological_torsion_characterisation}, and we
  may compose to get a bounded \(\dvr\)\nb-linear
  map \(M \to \dvf\otimes N\).
  By Lemma~\ref{lem:universal_to_vector_space}, there is a unique
  bounded \(\dvf\)\nb-linear
  map \(f'\colon \dvf\otimes M \to \dvf\otimes N\)
  with \(f'(\iota_M(m)) = f(m)\)
  for all \(m\in M\).
  Since \(f'(\iota_M(M)) \subseteq N\),
  \(f'\)
  maps the submodule \(\torf{M}\subseteq \dvf\otimes M\)
  to the submodule \(N\subseteq \dvf\otimes N\).
  The restricted map \(f^\#\colon \torf{M} \to N\)
  is bounded because both submodules carry the subspace bornology.
  This is the required factorisation of~\(f\).
  It is unique because \(\iota_M\colon M\to\torf{M}\) is surjective.
\end{proof}

We have seen that being bornologically torsion-free is hereditary for
submodules.  The obvious counterexample
\(\resf = \dvr \mathbin{/} \dvgen \dvr\)
shows that it cannot be hereditary for quotients.  Next we show that
it is hereditary for extensions:

\begin{theorem}
  \label{the:extension_tf}
  Let \(M' \xrightarrow{i} M \xrightarrow{q} M''\)
  be an extension of bornological \(\dvr\)\nb-modules.
  If \(M'\)
  and~\(M''\) are bornologically torsion-free, then so is~\(M\).
\end{theorem}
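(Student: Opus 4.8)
The plan is to verify directly the two conditions defining bornological torsion\nb-freeness of~\(M\): that~\(\dvgen_M\) is injective, and that every \(S\in\bdd_M\) with \(S\subseteq\dvgen M\) is of the form \(\dvgen_M(T)\) for some \(T\in\bdd_M\). Throughout I would use the elementary fact that if \(\phi\colon X\to Y\) is a bornological embedding, then \(\phi^{-1}(B)\) is bounded for every bounded \(B\subseteq Y\) (intersect~\(B\) with the image of~\(\phi\) and use that~\(\phi\) is a bornological isomorphism onto its image). Injectivity of~\(\dvgen_M\) is a diagram chase: if \(\dvgen m=0\), then \(\dvgen\cdot q(m)=q(\dvgen m)=0\), so \(q(m)=0\) because~\(M''\) is algebraically torsion\nb-free; hence \(m=i(m')\) for some \(m'\in M'\), and \(i(\dvgen m')=\dvgen\cdot i(m')=0\) forces \(\dvgen m'=0\) because~\(i\) is injective, whence \(m'=0\) because~\(M'\) is torsion\nb-free, and so \(m=0\).

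For the second condition, fix \(S\in\bdd_M\) with \(S\subseteq\dvgen M\) and set \(T\defeq\dvgen_M^{-1}(S)\); by the injectivity just shown together with \(S\subseteq\dvgen M\) we get \(\dvgen_M(T)=S\), so it remains only to prove \(T\in\bdd_M\). The set \(q(S)\) is bounded in~\(M''\), so \(S''\defeq\dvgen_{M''}^{-1}(q(S))\) is bounded because~\(M''\) is bornologically torsion\nb-free. Since~\(q\) is a bornological quotient map, choose \(V\in\bdd_M\) with \(q(V)=S''\). Now take any \(t\in T\). Then \(\dvgen\cdot q(t)=q(\dvgen t)\in q(S)\), so \(q(t)\in S''=q(V)\), say \(q(t)=q(v)\) with \(v\in V\); then \(t-v=i(m')\) for a unique \(m'\in M'\). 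Applying~\(\dvgen_M\) gives
\[
i(\dvgen m')=\dvgen t-\dvgen v\in(S-\dvgen V)\cap i(M'),
\]
and \(S-\dvgen V\defeq\setgiven{s-\dvgen v'}{s\in S,\ v'\in V}\) is bounded in~\(M\). Since~\(i\) is a bornological embedding, \(R'\defeq i^{-1}\bigl((S-\dvgen V)\cap i(M')\bigr)\) is bounded in~\(M'\) and contains \(\dvgen m'\); and since~\(M'\) is bornologically torsion\nb-free, \(W'\defeq\dvgen_{M'}^{-1}(R')\) is bounded in~\(M'\) and contains \(m'\). Hence \(t=v+i(m')\in V+i(W')\). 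As \(t\in T\) was arbitrary, \(T\subseteq V+i(W')\), which is bounded, so \(T\in\bdd_M\) and the theorem follows.

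I expect the main obstacle to be the second condition, and within it the step where one passes from controlling \(\dvgen m'\) to controlling \(m'\) itself after the lift \(t=v+i(m')\): this is exactly where \emph{bornological} rather than merely algebraic torsion\nb-freeness of~\(M'\) is needed, via the auxiliary bounded set~\(R'\). The other point that needs care is the observation \(\dvgen t-\dvgen v\in S-\dvgen V\), which keeps the argument bornological by bounding the error term in terms of the given set~\(S\) and its chosen lift~\(V\) rather than letting it range over all of~\(i(M')\). A more abstract alternative would invoke Proposition~\ref{pro:bornological_torsion_characterisation} together with flatness of~\(\dvf\) over~\(\dvr\), but verifying that \(\dvf\otimes_\dvr -\) carries the bornological extension to a bornological extension seems to require essentially the same work, so I would prefer the direct argument.
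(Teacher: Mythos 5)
Your proposal is correct and follows essentially the same route as the paper's proof: injectivity by a diagram chase, then lifting $q(S)$ divided by $\dvgen$ through the quotient map, subtracting to land in $i(M')$, and invoking bornological torsion-freeness of $M'$ on the bounded error set. The only difference is cosmetic bookkeeping (you work with the full preimages $\dvgen^{-1}(\cdot)$ where the paper chooses sets with exact equality $\dvgen T'' = q(S)$), which changes nothing of substance.
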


\begin{proof}
  The exactness of the sequence
  \(0 \to \ker \dvgen_{M'} \to \ker \dvgen_M \to \ker \dvgen_{M''}\)
  shows that~\(\dvgen_M\)
  is injective.  Let \(S\in \bdd_M\)
  be contained in~\(\dvgen M\).
  We want a bounded subset \(S' \in \bdd_M\)
  with \(\dvgen \cdot S' = S\).
  We have
  \(q(S) \subseteq q(\dvgen\cdot M) \subseteq \dvgen\cdot M''\),
  and \(q(S)\in \bdd_{M''}\)
  because~\(q\)
  is bounded.  Since~\(M''\)
  is bornologically torsion-free, there is \(T''\in\bdd_{M''}\)
  with \(\dvgen\cdot T'' = q(S)\).
  Since~\(q\)
  is a bornological quotient map, there is \(T\in \bdd_M\)
  with \(q(T) = T''\).
  Thus \(q(\dvgen\cdot T) = q(S)\).
  So for any \(x\in S\)
  there is \(y\in T\)
  with \(q(\dvgen\cdot y) = q(x)\).
  Since \(i = \ker(q)\),
  there is a unique \(z\in M'\)
  with \(x-\dvgen y = i(z)\).
  Let~\(T'\)
  be the set of these~\(z\).
  Since \(x\in \dvgen\cdot M\)
  by assumption and~\(M''\)
  is torsion-free, we have \(z \in \dvgen \cdot M'\).
  So \(T'\subseteq \dvgen\cdot M'\).
  And~\(T'\)
  is bounded because \(T' \subseteq i^{-1}(S-\dvgen\cdot T)\)
  and~\(i\)
  is a bornological embedding, Since~\(M'\)
  is bornologically torsion-free, there is a bounded subset
  \(U'\in \bdd_{M'}\)
  with \(\dvgen \cdot U' = T'\).
  Then
  \(S\subseteq \dvgen\cdot T + i(\dvgen\cdot U') = \dvgen\cdot (T +
  i(U'))\).
\end{proof}

Next we prove that bornological torsion-freeness is inherited by
completions:

\begin{theorem}
  \label{the:completion_tf}
  If~\(M\)
  is bornologically torsion-free, then so is its bornological
  completion~\(\comb{M}\).
\end{theorem}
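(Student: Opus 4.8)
The plan is to exhibit $\comb{M}$ as a bornological $\dvr$\nb-submodule of a complete bornological $\dvf$\nb-vector space and then invoke Lemma~\ref{sub_tf}. First I would reduce to the case that~$M$ is separated. Since the canonical map $M\to\comb{M}$ factors through the separated quotient $M/\overline{\{0\}}$, which inherits the universal property, we have $\comb{M}\cong\comb{M/\overline{\{0\}}}$, so it suffices to show that $M/\overline{\{0\}}$ is bornologically torsion-free when~$M$ is. The key observation is that $\overline{\{0\}}=\dvgen\cdot\overline{\{0\}}$: if $x$ lies in the closure of~$0$, say $x\in\delta_n\cdot S$ for all $n\in\N$ with $S\in\bdd_M$ a $\dvr$\nb-submodule and $\lim\abs{\delta_n}=0$, then $\abs{\delta_n}<1$ for large~$n$, so $x\in\dvgen M$; writing $x=\dvgen y$ (possible uniquely as~$M$ is algebraically torsion-free) and cancelling~$\dvgen$ shows that $y$ again lies in the closure of~$0$, whence $x=\dvgen y\in\dvgen\cdot\overline{\{0\}}$. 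Granting this, $M/\overline{\{0\}}$ has no $\dvgen$\nb-torsion ($\dvgen\cdot\bar x=0$ forces $\dvgen x\in\overline{\{0\}}=\dvgen\cdot\overline{\{0\}}$, hence $x\in\overline{\{0\}}$ by torsion-freeness of~$M$), and the preimage in~$M$ of $\dvgen\cdot(M/\overline{\{0\}})$ is $\dvgen M+\overline{\{0\}}=\dvgen M$, so any bounded subset of $M/\overline{\{0\}}$ lying in $\dvgen\cdot(M/\overline{\{0\}})$ is the image of a bounded subset of~$M$ contained in $\dvgen M$, which is $\dvgen$ times a bounded subset because~$M$ is bornologically torsion-free.

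So assume~$M$ separated and bornologically torsion-free. By Proposition~\ref{pro:bornological_torsion_characterisation}, $\iota_M\colon M\to\dvf\otimes M$ is a bornological embedding. I would next check that $\dvf\otimes M$ is separated: a point~$x$ of the closure of~$0$ in $\dvf\otimes M$ satisfies $\dvgen^k x\in\iota_M(\bigcap_n\delta_n S)$ for some $k\in\N$, some $\dvr$\nb-submodule $S\in\bdd_M$ and some null sequence~$(\delta_n)$, and $\bigcap_n\delta_n S\subseteq\bigcap_m\dvgen^m S=\{0\}$ because~$S$ is separated by Lemma~\ref{lem:separated_hereditary}.\ref{lem:separated_hereditary_1}; hence $\dvgen^k x=0$, so $x=0$ as $\dvf\otimes M$ is a bornological $\dvf$\nb-vector space. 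Therefore the canonical map $\dvf\otimes M\to\comb{\dvf\otimes M}=:W$ is a bornological embedding, and precomposing with~$\iota_M$ realises~$M$ as a bornological $\dvr$\nb-submodule of~$W$. Moreover~$W$ is complete by construction, and it is a bornological $\dvf$\nb-vector space: applying the completion functor to the bornological isomorphism ``multiplication by~$\dvgen$'' on $\dvf\otimes M$ shows that multiplication by~$\dvgen$ on~$W$ is a bornological isomorphism. In particular~$W$ is bornologically torsion-free.

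It remains to show that $\comb{M}$ is canonically isomorphic to the closure~$\overline M$ of~$M$ in~$W$ equipped with the subspace bornology; then $\comb{M}$ is a $\dvr$\nb-submodule of the bornologically torsion-free module~$W$ and Lemma~\ref{sub_tf} finishes the proof. Since~$W$ is complete and $\overline M$ is closed, $\overline M$ is complete by Theorem~\ref{the:extension_complete}.\ref{the:extension_complete_1}, so the inclusion $M\into\overline M$ induces a canonical bounded map $\comb{M}\to\overline M$, which I must show is a bornological isomorphism. The heart of the matter — and the step I expect to be the main obstacle — is to prove that a sequence in~$M$ that is Cauchy in~$W$ is already Cauchy in~$M$; this genuinely uses that~$W$ is a bornological $\dvf$\nb-vector space, since it fails for a general complete~$W$ (compare Example~\ref{exa:quotient_complete_by_non-closed}). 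Concretely, if $x_n-x_m\in\delta_j\cdot B$ for all $n,m\ge j$ with $B\in\bdd_W$ a $\dvr$\nb-submodule, one enlarges~$B$ to a $\dvr$\nb-submodule of $\dvf\otimes M$, which by Lemma~\ref{lem:universal_to_vector_space} lies in $\dvgen^{-k}\iota_M(S)$ for some $S\in\bdd_M$ and $k\in\N$; then for~$j$ large one gets $x_n-x_m\in\dvgen^{l_j}\cdot\iota_M(S)$ with $l_j\to\infty$, and injectivity of~$\iota_M$ returns a witness for Cauchyness in~$M$. Consequently the $M$\nb-Cauchy sequences are exactly the sequences from~$M$ that are Cauchy in~$\overline M$; this yields both that $\comb{M}\to\overline M$ is surjective (every point of $\overline M$ is the limit of an $M$\nb-Cauchy sequence) and, after a routine check, that it matches up the bornologies, since on both sides these are generated by the $\dvgen$\nb-adic completions of the bounded $\dvr$\nb-submodules of~$M$.
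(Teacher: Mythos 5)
Your reduction to the separated case is correct (the identity $\overline{\{0\}}=\dvgen\cdot\overline{\{0\}}$ for torsion-free $M$ is a nice observation, and the transfer of bornological torsion-freeness to $M/\overline{\{0\}}$ goes through), and so is your verification that $\dvf\otimes M$ is separated. The argument breaks down, however, at the step ``therefore the canonical map $\dvf\otimes M\to\comb{\dvf\otimes M}$ is a bornological embedding.'' Separatedness of a bornological module $N$ does not imply that $N\to\comb{N}$ is a bornological embedding, nor even that it is injective: the completion is the separated quotient of $\varinjlim \coma{N_i}$, the closure of $\{0\}$ is taken in that larger colimit, and the structure maps $\coma{N_i}\to\coma{N_j}$ need not be injective (the paper points this out just before Lemma~\ref{lem:kernel_to_completion}). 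Controlling these kernels is precisely what Lemma~\ref{lem:kernel_to_completion} and the analysis of the submodules $Z_S$ and $L_S$ in the paper's proof accomplish; your ``therefore'' assumes the conclusion of that analysis.

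The second gap is in the ``Cauchy in $W$ implies Cauchy in $M$'' step, which you correctly identify as the heart of the matter but do not actually carry out. A bounded $\dvr$\nb-submodule $B$ of $W=\comb{\dvf\otimes M}$ is \emph{not} contained in $\dvgen^{-k}$ times a bounded submodule of $\dvf\otimes M$; it is contained in the image of the $\dvgen$\nb-adic completion $\coma{B'}$ of such a submodule. So from $x_n-x_m\in\delta_j\cdot B$ you only obtain $x_n-x_m\in\dvgen^{l_j}\cdot i_{\infty,S}(\coma{S})$, and to extract a Cauchy witness in $\bdd_M$ you must know that $\iota(M)\cap\dvgen^{l}\, i_{\infty,S}(\coma{S})$ is contained in $\dvgen^{l}\cdot\iota(S')$ for some bounded $S'\subseteq M$, i.e.\ you must control how $M$ meets the completion of each bounded piece inside $\comb{M}$. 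This is exactly the content of the paper's computations that $Z_S\subseteq L_S=\bigcap_j\dvgen^j\comb{M_S}$ is a $\dvf$\nb-vector subspace and that $i_{T,S}^{-1}(\dvgen^j\comb{M_T})=\dvgen^j\comb{M_S}$. The same issue undermines the final identification $\comb{M}\cong\overline{M}$ with matching bornologies, which you defer to ``a routine check.'' In short, the strategy --- realise $\comb{M}$ inside a complete bornological $\dvf$\nb-vector space and apply Lemma~\ref{sub_tf} --- is reasonable and close in spirit to what the paper does, but the two steps you treat as formal consequences of separatedness are where all of the actual work lies, and as written they do not go through.
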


The proof requires some preparation.  We must look closely at the
construction of completions of bornological \(\dvr\)\nb-modules.

\begin{proposition}[\cite{Cortinas-Cuntz-Meyer-Tamme:Nonarchimedean}*{Proposition~2.15}]
  \label{completions_exist}
  Let~\(M\)
  be a bornological \(\dvr\)\nb-module.
  A completion of~\(M\)
  exists and is constructed as follows.  Write \(M = \varinjlim M_i\)
  as an inductive limit of the directed set of its bounded
  \(\dvr\)\nb-submodules.
  Let~\(\coma{M_i}\)
  denote the \(\dvgen\)\nb-adic
  completion of~\(M_i\).
  These form an inductive system as well, and
  \(\comb{M} \cong (\varinjlim \coma{M_i} ) \bigm/ \overline{\{0\}}\)
  is the separated quotient of their bornological inductive limit.

  The completion functor commutes with colimits, that is, the
  completion of a colimit of a diagram of bornological
  \(\dvr\)\nb-modules
  is the separated quotient of the colimit of the diagram of
  completions.
\end{proposition}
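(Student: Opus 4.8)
The plan is to verify that the module~\(\comb M\) described in the statement is a completion of~\(M\), and then to deduce the assertion about colimits formally.  First I would record the setup: the bounded \(\dvr\)\nb-submodules of~\(M\) form a directed poset under inclusion, since the sum of two bounded submodules is a bounded submodule; \(M\) is their bornological inductive limit; and each inclusion \(M_i\subseteq M_j\) is \(\dvgen\)\nb-adically continuous, hence extends to a map \(\coma{M_i}\to\coma{M_j}\) on \(\dvgen\)\nb-adic completions.  So \((\coma{M_i})\) is an inductive system of bornological \(\dvr\)\nb-modules, each carrying the bornology in which all subsets are bounded, for which bornological completeness coincides with \(\dvgen\)\nb-adic completeness.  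I would then set \(P\defeq\varinjlim_i\coma{M_i}\), so that a subset of~\(P\) is bounded exactly when it lies in the image of some~\(\coma{M_i}\), and \(\comb M\defeq P/\overline{\{0\}}\), equipped with the composite bounded \(\dvr\)\nb-linear map \(M\to P\to\comb M\).

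The first real step is to show that~\(\comb M\) is complete.  It is separated by Lemma~\ref{lem:separated_hereditary}, so by the characterisation of completeness recalled in Section~\ref{sec:complete} it is enough to enclose every bounded subset of~\(\comb M\) in a \(\dvgen\)\nb-adically complete bounded \(\dvr\)\nb-submodule.  A bounded subset lies in the image~\(T_i\) of some~\(\coma{M_i}\), and \(T_i\cong\coma{M_i}/N_i\), where~\(N_i\) is the preimage of~\(\overline{\{0\}}\) under the bounded---hence continuous---map \(\coma{M_i}\to\comb M\); as the preimage of a closed set, \(N_i\) is closed in~\(\coma{M_i}\).  I expect this to be the technical heart of the argument: the claim that a \(\dvgen\)\nb-adically complete \(\dvr\)\nb-module modulo a closed submodule is again \(\dvgen\)\nb-adically complete.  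This I would prove directly: the quotient \(\dvgen\)\nb-adic topology is separated because \(N_i=\bigcap_n(N_i+\dvgen^n\coma{M_i})\), and a compatible sequence in \(\varprojlim_n\coma{M_i}/(N_i+\dvgen^n\coma{M_i})\) lifts to a \(\dvgen\)\nb-adically Cauchy sequence in~\(\coma{M_i}\), whose limit witnesses completeness of the quotient.  Hence~\(T_i\) is \(\dvgen\)\nb-adically complete, and \(\comb M\) is complete.

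Next I would check the universal property.  Given a bounded \(\dvr\)\nb-linear map \(f\colon M\to X\) with~\(X\) complete, each~\(f(M_i)\) is bounded, hence contained in a \(\dvgen\)\nb-adically complete bounded submodule \(X_i\subseteq X\); since \(f(\dvgen^n M_i)\subseteq\dvgen^n X_i\), the restriction \(M_i\to X_i\) extends uniquely to \(\coma{M_i}\to X_i\injto X\), and these extensions are automatically compatible with the connecting maps, because for \(i\le j\) both composites \(\coma{M_i}\to X\) restrict to~\(f\) on the \(\dvgen\)\nb-adically dense submodule~\(M_i\) and~\(X\) is separated.  They therefore induce a bounded map \(P\to X\), which factors uniquely through the separated quotient~\(\comb M\) by the universal property of \(P\to P/\overline{\{0\}}\).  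This produces the required \(\dvr\)\nb-linear extension \(\comb M\to X\) of~\(f\); uniqueness holds because every element of~\(\comb M\) is the image of some \(\hat m\in\coma{M_i}\), which is a \(\dvgen\)\nb-adic limit of a sequence in~\(M_i\), and bounded maps into the separated module~\(X\) preserve limits, so such a map is already determined on the image of~\(M\).

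Finally, for the colimit statement I would argue formally.  What has been shown says precisely that the completion functor \(M\mapsto\comb M\) is left adjoint to the inclusion of complete bornological \(\dvr\)\nb-modules into all bornological \(\dvr\)\nb-modules; in particular it preserves colimits and is idempotent.  For a diagram~\((N_j)\), applying colimit-preservation to the unit maps \(N_j\to\comb{N_j}\) and using \(\comb{\comb{N_j}}\cong\comb{N_j}\) shows that \(\varinjlim_j N_j\to\varinjlim_j\comb{N_j}\) becomes invertible after applying the completion functor, so \(\comb{\varinjlim_j N_j}\cong\comb{\varinjlim_j\comb{N_j}}\).  On the other hand, the bornological inductive limit \(Q\defeq\varinjlim_j\comb{N_j}\) has every bounded subset inside the image of a bounded subset of a finite direct sum of the~\(\comb{N_j}\), which is again complete; running the completeness argument above---now using that \(Q/\overline{\{0\}}\) is separated to get the relevant kernel closed---shows that \(Q/\overline{\{0\}}\) is complete, and a separated quotient that is already complete is a completion of~\(Q\).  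Combining the last two facts gives \(\comb{\varinjlim_j N_j}\cong(\varinjlim_j\comb{N_j})/\overline{\{0\}}\), as claimed.  Apart from the closed-submodule-quotient fact used in the completeness step, the argument is routine manipulation of universal properties.
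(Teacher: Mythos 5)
The paper itself offers no argument for this proposition: it is quoted from \cite{Cortinas-Cuntz-Meyer-Tamme:Nonarchimedean}*{Proposition~2.15}, so there is no in-paper proof to compare against. Your proof is correct and essentially reconstructs the cited construction. You correctly isolate the one genuinely technical ingredient -- that a \(\dvgen\)\nb-adically complete module modulo a \(\dvgen\)\nb-adically closed submodule is again \(\dvgen\)\nb-adically separated and complete -- and your lifting argument for it is the standard, valid one; since each \(\coma{M_i}\) carries the bornology of all subsets, bornological closedness and convergence there agree with the \(\dvgen\)\nb-adic notions, so the kernels are indeed adically closed and the images of the \(\coma{M_i}\) in \((\varinjlim\coma{M_i})/\overline{\{0\}}\) are complete bounded submodules. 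The universal property step is also sound: it uses the characterisation of completeness by \(\dvgen\)\nb-adically complete bounded submodules recalled in Section~\ref{sec:complete}, with compatibility and uniqueness coming from density of \(M_i\) in \(\coma{M_i}\) and separatedness of the target. For the colimit statement, your two-step argument -- the formal isomorphism \(\comb{\varinjlim N_j}\cong\comb{\varinjlim \comb{N_j}}\) from the adjunction, plus the observation that the separated quotient of \(\varinjlim\comb{N_j}\) is already complete because bounded subsets of a bornological colimit are images of bounded subsets of finite direct sums of complete modules, to which the quotient-by-closed-kernel argument applies again -- works for arbitrary, not just filtered, diagrams, which is what the statement requires. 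The only imprecision is verbal: \(N_i\) is the preimage of \(\{0\}\subseteq\comb{M}\) (equivalently of \(\overline{\{0\}}\) inside \(\varinjlim\coma{M_i}\)), not of \(\overline{\{0\}}\) under the map to \(\comb{M}\); this does not affect the argument.
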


Since taking quotients may create torsion, the
information above is not yet precise enough to show that completions
inherit bornological torsion-freeness.  This requires some more work.
First we write~\(M\)
in a certain way as an inductive limit, using that it is
bornologically torsion-free.  For a bounded submodule~\(S\)
in~\(M\), let
\begin{align*}
  \dvgen^{-n} S
  &\defeq \setgiven{x\in M}{\dvgen^n \cdot x \in S}
  \subseteq M\\
  M_S &\defeq \bigcup_{n\in\N} \dvgen^{-n} S \subseteq M.
\end{align*}
The \emph{gauge semi-norm} of~\(S\)
is defined by
\(\norm{x}_S \defeq \inf \setgiven{\varepsilon^n}{x\in\dvgen^n S}\),
where \(\varepsilon=\abs{\dvgen}\)
(see \cite{Cortinas-Cuntz-Meyer-Tamme:Nonarchimedean}*{Example~2.4}).
A subset is bounded for this semi-norm if and only if it is contained
in~\(\dvgen^{-n} S\)
for some \(n\in\N\).
Since~\(M\)
is bornologically torsion-free, \(\dvgen^{-n} S\in\bdd_M\)
for \(n\in\N\).
So subsets that are bounded in the gauge semi-norm on~\(M_S\)
are bounded in~\(M\).
If \(S\subseteq T\),
then \(M_S\subseteq M_T\)
and the inclusion is contracting and hence bounded.  The
bornological inductive limit of this inductive system is naturally
isomorphic to~\(M\)
because any bounded subset of~\(M\)
is bounded in~\(M_S\)
for some bounded submodule \(S\subseteq M\)
(compare the proof of
\cite{Cortinas-Cuntz-Meyer-Tamme:Nonarchimedean}*{Proposition~2.5}).

The bornological completion~\(\comb{M_S}\)
of~\(M_S\)
as a bornological \(\dvr\)\nb-module
is canonically isomorphic to its Hausdorff completion as a semi-normed
\(\dvr\)\nb-module.
We call this a \emph{Banach \(\dvr\)\nb-module}.
Both completions are isomorphic to the increasing union of the
\(\dvgen\)\nb-adic
completions \(\coma{\dvgen^{-n}S}\).
If \(S\subseteq T\),
then \(M_S\subseteq M_T\)
and this inclusion is norm-contracting.  So we get an induced
contractive linear map \(i_{T,S}\colon \comb{M_S} \to \comb{M_T}\).
This map need not be injective any more (see
\cite{Cortinas-Cuntz-Meyer-Tamme:Nonarchimedean}*{Example~2.15}).
Hence the canonical maps \(i_{\infty,S}\colon \comb{M_S} \to \comb{M}\)
need not be injective.
The bornological completion commutes with (separated) inductive limits
by Proposition~\ref{completions_exist}.  So the completion of~\(M\)
is isomorphic to the separated quotient of the colimit of the
inductive system formed by the Banach \(\dvr\)\nb-modules
\(\comb{M_S}\)
and the norm-contracting maps \(i_{T,S}\)
for \(S\subseteq T\).

\begin{lemma}
  \label{lem:kernel_to_completion}
  The submodules
  \[
  Z_S\defeq \ker(i_{\infty,S}\colon \comb{M_S} \to \comb{M})
  =  i_{\infty,S}^{-1}(\{0\}) \subseteq \comb{M_S}
  \]
  are norm-closed and satisfy \(i_{T,S}^{-1}(Z_T) = Z_S\)
  if \(S\subseteq T\).
  They are minimal with these properties in the sense that if
  \(L_S \subseteq \comb{M_S}\)
  are norm-closed and satisfy \(i_{T,S}^{-1}(L_T) = L_S\)
  for \(S\subseteq T\),
  then \(Z_S \subseteq L_S\)
  for all bounded \(\dvr\)\nb-submodules \(S\subseteq M\).
\end{lemma}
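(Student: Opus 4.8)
The plan is to prove the two claimed properties of $Z_S$ — being norm-closed and compatible with the transition maps — directly, and then to establish minimality by a convergence argument that exploits the concrete description of $\comb{M}$ as a separated quotient of $\varinjlim \comb{M_S}$.

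First I would observe that each $Z_S$ is norm-closed because it is the kernel of the bounded (hence continuous) linear map $i_{\infty,S}\colon \comb{M_S}\to\comb{M}$, and $\comb{M}$ is separated, so $\{0\}\subseteq\comb{M}$ is closed and its preimage is closed in $\comb{M_S}$. For the compatibility relation, the identity $i_{\infty,S} = i_{\infty,T}\circ i_{T,S}$ for $S\subseteq T$ gives $i_{T,S}^{-1}(Z_T) = i_{T,S}^{-1}(i_{\infty,T}^{-1}(\{0\})) = i_{\infty,S}^{-1}(\{0\}) = Z_S$ immediately.

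The substantive part is minimality. Suppose $(L_S)$ is a family of norm-closed submodules with $i_{T,S}^{-1}(L_T) = L_S$ for all $S\subseteq T$. I want to show $Z_S\subseteq L_S$. Take $\xi\in Z_S$, so $i_{\infty,S}(\xi)=0$ in $\comb{M}$. By Proposition~\ref{completions_exist}, $\comb{M}$ is the separated quotient of $\varinjlim \comb{M_S}$, so the image of $\xi$ in the bornological colimit lies in the closure of $\{0\}$. Unwinding what this means for an inductive limit of semi-normed modules: there is a bounded submodule $T\supseteq S$ and a null sequence $(\delta_n)$ in $\dvr$ such that $i_{T,S}(\xi) \in \delta_n\cdot B$ for all $n$, where $B$ is the unit ball of $\comb{M_T}$ (or more precisely $i_{T,S}(\xi)$ is, after possibly enlarging $T$ again, a norm limit of a sequence of elements each lying in $i_{T',T}(\text{something going to }0)$). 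The cleanest route is: $i_{T,S}(\xi)$ becomes $0$ in $\comb{M}$, and by the construction of the separated quotient of an inductive limit, there is $T'\supseteq T$ with $i_{T',T}(i_{T,S}(\xi)) = \lim_{n} \eta_n$ in $\comb{M_{T'}}$ where $\eta_n\to 0$ in norm — in fact one can arrange $i_{T',T}(i_{T,S}(\xi))$ itself has norm $0$, i.e.\ equals $0$ in $\comb{M_{T'}}$, since passing far enough along the inductive system kills any element that dies in the colimit's separated quotient. Then $0 = i_{T',S}(\xi) \in L_{T'}$ trivially, and applying $i_{T',S}^{-1}(L_{T'}) = L_S$ (which follows from the hypothesis by composing the relation along $S\subseteq T\subseteq T'$, since $i_{T',S} = i_{T',T}\circ i_{T,S}$ and hence $i_{T',S}^{-1}(L_{T'}) = i_{T,S}^{-1}(i_{T',T}^{-1}(L_{T'})) = i_{T,S}^{-1}(L_T) = L_S$) gives $\xi\in L_S$.

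The main obstacle I anticipate is the precise bookkeeping in that last paragraph: pinning down exactly in what sense an element of $\comb{M_S}$ that maps to $0$ in the separated quotient $\comb{M}$ of $\varinjlim\comb{M_S}$ already maps to $0$ in some $\comb{M_{T'}}$ for $T'$ large enough. This is a statement about colimits of semi-normed (Banach) modules along norm-contracting maps, and one has to be careful that "lands in the closure of $0$ in the colimit" translates into "norm tends to $0$ along the system," and then that a norm-closed $L_{T'}$ contains any element of norm $0$ — which it does, since $0\in L_{T'}$ and norm-$0$ elements are exactly those in the closure of $\{0\}$, but in a Banach module the only norm-$0$ element is $0$ itself once we have passed to a completion, so this is automatic. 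I would isolate the needed colimit fact as a short preliminary observation (essentially already contained in the discussion preceding the lemma and in \cite{Cortinas-Cuntz-Meyer-Tamme:Nonarchimedean}*{Proposition~2.15}) and then the argument above goes through cleanly.
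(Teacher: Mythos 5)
Your arguments for norm-closedness (preimage of the closed set $\{0\}\subseteq\comb{M}$ under the bounded, hence continuous, map $i_{\infty,S}$) and for the compatibility $i_{T,S}^{-1}(Z_T)=Z_S$ (from $i_{\infty,S}=i_{\infty,T}\circ i_{T,S}$) are correct and coincide with the paper's. The minimality argument, however, has a genuine gap at exactly the point you flag as ``the main obstacle'': you assert that an element $\xi\in\comb{M_S}$ with $i_{\infty,S}(\xi)=0$ already satisfies $i_{T',S}(\xi)=0$ in $\comb{M_{T'}}$ for a single bounded submodule $T'\supseteq S$. That is not a general fact, and nothing in your sketch proves it. Vanishing in $\comb{M}$ means lying in the closure of $\{0\}$ in the bornological colimit $\varinjlim\comb{M_T}$; unwinding this gives, for each $n$, a stage $T_n$ and an identity $i_{T_n,S}(\xi)=\delta_n\cdot i_{T_n,T}(b_n)$ with $(\delta_n)$ a null sequence in $\dvr$ and $(b_n)$ norm-bounded, so the norms of the images of~$\xi$ tend to~$0$ only along a \emph{moving} family of stages~$T_n$. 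To force the image to vanish at one stage you would need a bounded submodule dominating all the~$T_n$, and a countable family of bounded submodules need not have a bounded upper bound. This is precisely why Proposition~\ref{completions_exist} must pass to the separated quotient: the colimit $\varinjlim\comb{M_T}$ is in general not separated, so $\overline{\{0\}}$ there is strictly larger than the set of elements that die at some finite stage. The fallback you offer --- the norms tend to $0$ and each $L_{T_n}$ is norm-closed --- does not close the gap either, because having small norm does not place an element in a closed submodule.

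The paper resolves this without any stabilization claim, by a universal-property argument: since each $L_S$ is norm-closed, the quotient $\comb{M_S}/L_S$ is a complete normed module (Theorem~\ref{the:extension_complete}), and the hypothesis $i_{T,S}^{-1}(L_T)=L_S$ makes the induced maps $\comb{M_S}/L_S\to\comb{M_T}/L_T$ injective; hence $\varinjlim \comb{M_S}/L_S$ is a directed union, separated and complete. The universal property of $\comb{M}$ then factors the natural map $M\to\varinjlim\comb{M_S}/L_S$ through $\comb{M}$, and since the two bounded maps $\comb{M_S}\to\varinjlim\comb{M_T}/L_T$ (one through $\comb{M}$, one through $\comb{M_S}/L_S$) agree on the dense submodule $M_S$ and the target is separated, they agree everywhere; injectivity of $\comb{M_S}/L_S\to\varinjlim\comb{M_T}/L_T$ then yields $Z_S\subseteq L_S$. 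You should replace your stabilization step by this construction, or else supply a proof of the stabilization claim, which I do not believe holds in general.
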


\begin{proof}
  The property \(i_{T,S}^{-1}(Z_T) = Z_S\)
  is trivial.  The map~\(i_{\infty,S}\)
  is bounded and hence preserves convergence of sequences.
  Since~\(\comb{M}\)
  is separated, the subset \(\{0\}\subseteq \comb{M}\)
  is bornologically closed.  Therefore, its preimage~\(Z_S\)
  in~\(\comb{M_S}\)
  is also closed.
  Let~\((L_S)\)
  be any family of closed submodules with \(i_{T,S}^{-1}(L_T) = L_S\).
  The quotient seminorm on \(\comb{M_S}/L_S\)
  is again a norm because~\(L_S\)
  is closed.  And \(\comb{M_S}/L_S\)
  inherits completeness from~\(\comb{M_S}\)
  by Theorem~\ref{the:extension_complete}.
  If \(S\subseteq T\),
  then~\(\iota_{T,S}\)
  induces an injective map
  \(i_{T,S}'\colon \comb{M_S}/L_S \to \comb{M_T}/L_T\)
  because \(L_S = i_{T,S}^{-1}(L_T)\).
  Hence the colimit of the inductive system
  \((\comb{M_S}/L_S,i_{T,S}')\)
  is like a directed union of subspaces, and each \(\comb{M_S}/L_S\)
  maps faithfully into it.  Thus this colimit is separated.  It is
  even complete because each \(\comb{M_S}/L_S\)
  is complete.  Hence the map from~\(M\)
  to this colimit induces a map on the completion~\(\comb{M}\).
  This implies \(Z_S \subseteq L_S\).
\end{proof}

Next we link~\(\comb{M}\)
to the \(\dvgen\)\nb-adic
completion \(\coma{M} \defeq \varprojlim M/\dvgen^j M\).
Equip the quotients \(M/\dvgen^j M\)
with the quotient bornology.  Since
\(\dvgen^j\cdot (M/\dvgen^j) = 0\),
any Cauchy sequence in \(M/\dvgen^j M\)
is eventually constant.  So each \(M/\dvgen^j M\)
is complete.  Hence the quotient map \(M\to M/\dvgen^j M\)
induces a bounded \(\dvr\)\nb-module
homomorphism \(\comb{M} \to M/\dvgen^j M\).
Putting them all together gives a map \(\comb{M} \to \coma{M}\),
which is bounded if we give~\(\coma{M}\)
the projective limit bornology.

Let \(S\subseteq M\)
be a bounded \(\dvr\)\nb-submodule
and let \(j\in\N\).
We have defined the submodules~\(M_S\)
so that \(M_S \cap \dvgen^j M = \dvgen^j M_S\).
That is, the map \(M_S/ \dvgen^j M_S \to M/\dvgen^j M\)
is injective.  Since~\(M_S\)
is dense in its norm-completion~\(\comb{M_S}\),
we have \(\comb{M_S} = M_S + \dvgen^j \coma{S}\)
and hence \(\comb{M_S} = M_S + \dvgen^j \comb{M_S}\).
Thus the inclusion \(M_S \to \comb{M_S}\) induces an isomorphism
\[
M_S/\dvgen^j M_S \cong \comb{M_S}/\dvgen^j \comb{M_S}.
\]
Letting~\(j\)
vary, we get an injective map \(\coma{M_S} \to \coma{M}\)
and an isomorphism between the \(\dvgen\)\nb-adic
completions of \(M_S\) and~\(\comb{M_S}\).

\begin{proof}[Proof of Theorem~\textup{\ref{the:completion_tf}}]
  For each bounded \(\dvr\)\nb-submodule
  \(S\subseteq M\),
  define
  \(L_S \defeq \bigcap_{j\in\N} \dvgen^j\cdot \comb{M_S} \subseteq
  \comb{M_S}\).
  This is the kernel of the canonical map to the \(\dvgen\)\nb-adic
  completion of~\(\comb{M_S}\).
  The completion~\(\comb{M_S}\)
  is torsion-free because it carries a norm.  Hence~\(L_S\)
  is also the largest \(\dvf\)\nb-vector
  space contained in~\(\comb{M_S}\).
  The subspace~\(L_S\)
  is closed because the maps
  \(\comb{M_S} \to \comb{M_S}/\dvgen^j \comb{M_S}\)
  for \(j\in\N\)
  are bounded and their target spaces are separable, even complete.

  Let \(S\subseteq T\).
  The maps \(M_S/\dvgen^j M_S \to M_T/\dvgen^j M_T\)
  are injective for all \(j\in\N\),
  and \(\comb{M_S}/\dvgen^j \comb{M_S} \cong M_S/\dvgen^j M_S\),
  \(\comb{M_T}/\dvgen^j \comb{M_T} \cong M_T/\dvgen^j M_T\).
  So~\(i_{T,S}\)
  induces an injective map
  \(\comb{M_S}/\dvgen^j \comb{M_S} \to \comb{M_T}/\dvgen^j
  \comb{M_T}\).
  This implies
  \(i_{T,S}^{-1}(\dvgen^j \comb{M_T}) = \dvgen^j \comb{M_S}\)
  for all \(j\in\N\) and then \(i_{T,S}^{-1}(L_T) = L_S\).

  By Lemma~\ref{lem:kernel_to_completion}, the kernel
  \(Z_S = \ker(i_{\infty,S})\)
  is contained in~\(L_S\)
  for all~\(S\).
  Since \(\dvgen_{L_S}\)
  is a bornological isomorphism, the subsets
  \(\dvgen\cdot Z_S \subseteq Z_S\)
  are also bornologically closed, and they satisfy
  \(i_{T,S}^{-1}(\dvgen\cdot Z_T) = \dvgen\cdot i_{T,S}^{-1}(Z_T)
  = \dvgen\cdot Z_S\).
  Hence \(Z_S \subseteq \dvgen\cdot Z_S\)
  for all~\(S\)
  by Lemma~\ref{lem:kernel_to_completion}.  Thus \(Z_S\subseteq L_S\)
  is a \(\dvf\)\nb-vector
  subspace in~\(\comb{M_S}\).
  So the quotient \(\comb{M_S}/Z_S\)
  is still bornologically torsion-free.  And any element of
  \(\comb{M_S}/Z_S\)
  that is divisible by~\(\dvgen^j\)
  lifts to an element in \(\dvgen^j\cdot \comb{M_S}\).

  Any bounded subset of~\(\comb{M}\)
  is contained in \(i_{\infty,S}(\coma{S})\)
  for some bounded \(\dvr\)\nb-submodule
  \(S\subseteq M\),
  where we view~\(\coma{S}\)
  as a subset of~\(\comb{M_S}\).
  Let \(j\in\N\).
  To prove that~\(\comb{M}\)
  is bornologically torsion-free, we must show that
  \(\dvgen^{-j} i_{\infty,S}(\coma{S})\)
  is bounded.  Let \(x\in\comb{M}\)
  satisfy \(\dvgen^j x \in i_{\infty,S}(\coma{S})\).
  We claim that \(x=i_{\infty,S}(y)\)
  for some \(y\in \comb{M_S}\)
  with \(\dvgen^j y \in\coma{S}\).
  This implies that \(\dvgen^{-j}\cdot i_{\infty,S}(\coma{S})\)
  is bounded in~\(\comb{M}\).
  It remains to prove the claim.  There are a bounded
  \(\dvr\)\nb-submodule
  \(T\subseteq M\)
  and \(z\in \comb{M_T}\)
  with \(x=i_{\infty,T}(z)\).
  We may replace~\(T\)
  by \(T+S\)
  to arrange that \(T\supseteq S\).
  Let \(w\in \coma{S}\)
  satisfy \(\dvgen^j x = i_{\infty,S}(w)\).
  This is equivalent to
  \(\dvgen^j z - i_{T,S}(w) \in \ker i_{\infty,T} = Z_T\).
  Since~\(Z_T\)
  is a \(\dvf\)\nb-vector
  space, there is \(z_0\in Z_T\)
  with \(\dvgen^j z - i_{T,S}(w) = \dvgen^j z_0\).
  Since \(x=i_{\infty,T}(z-z_0)\),
  we may replace~\(z\)
  by \(z-z_0\)
  to arrange that \(\dvgen^j z = i_{T,S}(w)\).
  Since \(i_{T,S}^{-1}(\dvgen^j \comb{M_T}) = \dvgen^j \comb{M_S}\),
  there is \(y\in \comb{M_S}\)
  with \(\dvgen^j\cdot y = w\).
  Then \(\dvgen^j z = \dvgen^j i_{T,S}(y)\).
  This implies \(z = i_{T,S}(y)\)
  because~\(\comb{M_T}\) is torsion-free.  This proves the claim.
\end{proof}

\begin{proposition}
  \label{pro:born_tf_linear_completion}
  Let~\(M\)
  be a bornologically torsion-free bornological \(\dvr\)\nb-module.
  Then \(\dvf\otimes \comb{M} \cong \comb{\dvf\otimes M}\)
  with an isomorphism compatible with the canonical maps from~\(M\)
  to both spaces.
\end{proposition}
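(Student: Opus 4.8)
The plan is to present both sides as completions of the same sequential colimit and then show that the separated quotient appearing there is trivial. For any bornological $\dvr$\nb-module~$N$ one has a natural bornological isomorphism
\[
  \dvf\otimes N \;\cong\; \varinjlim\bigl(N\xrightarrow{\dvgen_N}N\xrightarrow{\dvgen_N}N\to\dotsb\bigr),
\]
because $\dvf\cong\varinjlim(\dvr\xrightarrow{\dvgen}\dvr\xrightarrow{\dvgen}\dotsb)$ as bornological $\dvr$\nb-modules and the bornological tensor product commutes with colimits (this is implicit in the proof of Lemma~\ref{lem:universal_to_vector_space}, where bounded submodules of $\dvf\otimes M$ are written inside $\dvgen^{-k}\dvr\otimes S$). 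Applying this with $N=M$ and invoking Proposition~\ref{completions_exist}, according to which the completion functor commutes with colimits, and using $\comb{\dvgen_M}=\dvgen_{\comb M}$ together with the same isomorphism for $N=\comb M$, we obtain
\[
  \comb{\dvf\otimes M}
  \;\cong\; \Bigl(\varinjlim\bigl(\comb M\xrightarrow{\dvgen_{\comb M}}\comb M\to\dotsb\bigr)\Bigr)\bigm/\overline{\{0\}}
  \;=\; (\dvf\otimes\comb M)\bigm/\overline{\{0\}}.
\]
Since every isomorphism used here is canonical, tracing the zeroth stage of these colimits shows that the resulting isomorphism identifies the canonical map $M\to\comb{\dvf\otimes M}$ with the canonical map $M\to\comb M\xrightarrow{\iota_{\comb M}}\dvf\otimes\comb M$.

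It therefore remains to prove that $\dvf\otimes\comb M$ is already separated, and this is where the hypothesis on~$M$ enters. By Theorem~\ref{the:completion_tf} the completion~$\comb M$ is bornologically torsion-free, so by Proposition~\ref{pro:bornological_torsion_characterisation} the map $\iota_{\comb M}\colon\comb M\to\dvf\otimes\comb M$ is a bornological embedding; identifying~$\comb M$ with its image we may write $\dvf\otimes\comb M=\bigcup_{k\in\N}\dvgen^{-k}\comb M$ with $\dvgen^{-k}\comb M\defeq\setgiven{y\in\dvf\otimes\comb M}{\dvgen^k\cdot y\in\comb M}$. Using bornological torsion-freeness of~$\comb M$ once more, one checks that multiplication by~$\dvgen^k$ is a bornological isomorphism from $\dvgen^{-k}\comb M$, with the subspace bornology, onto~$\comb M$; in particular each $\dvgen^{-k}\comb M$ is complete, hence separated. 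If a sequence in $\dvf\otimes\comb M$ converges both to~$x$ and to~$x'$, then the bounded sets and the null sequences in~$\dvr$ witnessing these two convergences, together with the elements $x$ and~$x'$, all lie in a single member $\dvgen^{-k}\comb M$ of the exhaustion; since that submodule is separated, $x=x'$. Hence $\dvf\otimes\comb M$ is separated, so $\overline{\{0\}}=\{0\}$ in it, and the displayed formula becomes $\comb{\dvf\otimes M}\cong\dvf\otimes\comb M$.

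The main obstacle is the very last step: verifying that the subspace bornology on $\dvgen^{-k}\comb M\subseteq\dvf\otimes\comb M$ coincides with the bornology transported from~$\comb M$ along multiplication by~$\dvgen^k$ — which is precisely a reformulation of bornological torsion-freeness of~$\comb M$ provided by Proposition~\ref{pro:bornological_torsion_characterisation} — and then checking that a convergent sequence cannot escape through ever larger members of the exhaustion $(\dvgen^{-k}\comb M)_{k\in\N}$. Both points are short once the colimit description of the tensor-product bornology is in place.
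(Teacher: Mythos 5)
Your proof is correct, but it takes a genuinely different route from the paper's. The paper argues purely by universal properties: \(M\to\comb{M}\) is the universal arrow to a complete module and \(M\to\dvf\otimes M\) the universal arrow to a bornological \(\dvf\)\nb-vector space (Lemma~\ref{lem:universal_to_vector_space}), so both \(M\to\dvf\otimes\comb{M}\) and \(M\to\comb{\dvf\otimes M}\) are universal arrows from~\(M\) to a complete bornological \(\dvf\)\nb-vector space, and uniqueness of universal arrows gives the compatible isomorphism. The one substantive input there is the assertion that \(\dvf\otimes\comb{M}\) is complete, which the paper states without elaboration; your exhaustion \(\dvf\otimes\comb{M}=\bigcup_{k}\dvgen^{-k}\comb{M}\) by complete submodules, resting on Theorem~\ref{the:completion_tf} and Proposition~\ref{pro:bornological_torsion_characterisation}, is precisely the justification needed for that step, so your write-up makes explicit where the torsion-freeness hypothesis enters. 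What you do differently is to replace the universal-property bookkeeping by the colimit description \(\dvf\otimes N\cong\varinjlim\bigl(N\xrightarrow{\dvgen_N}N\to\dotsb\bigr)\) combined with Proposition~\ref{completions_exist}, reducing everything to the separatedness of \(\dvf\otimes\comb{M}\). This is slightly longer and requires you to check that the bornological tensor product commutes with this colimit (true, and visible from the description of bounded subsets of \(\dvf\otimes N\)), but it is more concrete and isolates the only real analytic issue cleanly. Both arguments are sound.
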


\begin{proof}
  The canonical map \(M\to \comb{M}\)
  is the universal arrow from~\(M\)
  to a complete \(\dvr\)\nb-module.
  The canonical map \(M\to \dvf\otimes M\)
  is the universal arrow from~\(M\)
  to a bornological \(\dvf\)\nb-vector
  space by Lemma~\ref{lem:universal_to_vector_space}.  Since
  \(\dvf\otimes \comb{M}\)
  is again complete, the canonical map \(M\to \dvf\otimes \comb{M}\)
  is the universal arrow from~\(M\)
  to a complete bornological \(\dvf\)\nb-vector
  space.  The completion \(\comb{\dvf\otimes M}\)
  is also a bornological \(\dvf\)\nb-vector
  space.  The canonical map \(M\to \comb{\dvf\otimes M}\)
  is another universal arrow from~\(M\)
  to a complete bornological \(\dvf\)\nb-vector
  space.  Since the universal property determines its target uniquely
  up to canonical isomorphism, there is a unique isomorphism
  \(\dvf\otimes \comb{M} \cong \comb{\dvf\otimes M}\)
  that makes the following diagram commute:
  \[
  \begin{tikzcd}[column sep=small, row sep=small]
    \dvf\otimes \comb{M} \arrow[rr, "\cong"] && \comb{\dvf\otimes M}\\
    & M \arrow[ul] \arrow[ur]
  \end{tikzcd}\qedhere
  \]
\end{proof}

\begin{corollary}
  \label{cor:tf_embedding}
  If~\(M\)
  is bornologically torsion-free, then the canonical map
  \(\comb{M} \to \comb{\dvf\otimes M}\) is a bornological embedding.
\end{corollary}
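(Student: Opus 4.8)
The plan is to recognise the canonical map $\comb{M}\to\comb{\dvf\otimes M}$ as the composite of the bornological embedding $\iota_{\comb{M}}\colon\comb{M}\to\dvf\otimes\comb{M}$ with the isomorphism $\dvf\otimes\comb{M}\cong\comb{\dvf\otimes M}$ from Proposition~\ref{pro:born_tf_linear_completion}. Since a bornological embedding followed by a bornological isomorphism is again a bornological embedding, this identification finishes the proof. First I would invoke Theorem~\ref{the:completion_tf} to see that $\comb{M}$ is bornologically torsion-free; then Proposition~\ref{pro:bornological_torsion_characterisation}, applied to $\comb{M}$ in place of~$M$, shows that $\iota_{\comb{M}}\colon\comb{M}\to\dvf\otimes\comb{M}$ is a bornological embedding.

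It then remains to check that, under the isomorphism of Proposition~\ref{pro:born_tf_linear_completion}, the map $\iota_{\comb{M}}$ corresponds to the canonical map $\comb{M}\to\comb{\dvf\otimes M}$, that is, the map induced by functoriality of the completion from $\iota_M\colon M\to\dvf\otimes M$. Both of these are bounded $\dvr$\nb-linear maps $\comb{M}\to\comb{\dvf\otimes M}$, so by the universal property of the completion map $M\to\comb{M}$ it suffices to check that they agree after precomposition with $M\to\comb{M}$. Precomposing the functorial map with $M\to\comb{M}$ yields, by construction, the canonical map $M\to\comb{\dvf\otimes M}$. Precomposing $\iota_{\comb{M}}$ with $M\to\comb{M}$ yields the canonical map $M\to\dvf\otimes\comb{M}$, and composing this further with the isomorphism of Proposition~\ref{pro:born_tf_linear_completion} gives again the canonical map $M\to\comb{\dvf\otimes M}$, precisely because that isomorphism was built to be compatible with the canonical maps out of~$M$. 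Hence the two maps out of $\comb{M}$ coincide, and the canonical map $\comb{M}\to\comb{\dvf\otimes M}$ is a bornological embedding.

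I do not expect a genuine obstacle here: the argument is a diagram chase through the universal properties established earlier (Lemma~\ref{lem:universal_to_vector_space}, Proposition~\ref{pro:bornological_torsion_characterisation}, and Proposition~\ref{pro:born_tf_linear_completion}), together with Theorem~\ref{the:completion_tf}. The only point that needs care is to be explicit about which of the a priori several maps $\comb{M}\to\comb{\dvf\otimes M}$ is meant---the functorial one, the one obtained from $\iota_{\comb{M}}$ via Proposition~\ref{pro:born_tf_linear_completion}, and the universal arrow from $\comb{M}$ to a complete bornological $\dvf$\nb-vector space---and to invoke uniqueness in the appropriate universal property to conclude that they all agree.
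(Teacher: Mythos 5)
Your proposal is correct and follows essentially the same route as the paper: replace the canonical map $\comb{M}\to\comb{\dvf\otimes M}$ by $\iota_{\comb{M}}\colon\comb{M}\to\dvf\otimes\comb{M}$ via the isomorphism of Proposition~\ref{pro:born_tf_linear_completion}, then conclude by Proposition~\ref{pro:bornological_torsion_characterisation} and Theorem~\ref{the:completion_tf}. The only difference is that you spell out the uniqueness argument identifying the two maps, which the paper leaves implicit.
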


\begin{proof}
  Use the isomorphism
  \(\dvf\otimes \comb{M} \cong \comb{\dvf\otimes M}\)
  to replace the canonical map \(\comb{M} \to \comb{\dvf\otimes M}\)
  by the canonical map \(\comb{M} \to \dvf\otimes \comb{M}\).
  This is a bornological embedding if and only if~\(\comb{M}\)
  is bornologically torsion-free by
  Proposition~\ref{pro:bornological_torsion_characterisation}.  And
  this is true by Theorem~\ref{the:completion_tf}.
\end{proof}

Finally, we show that being bornologically torsion-free is compatible
with linear growth bornologies:

\begin{proposition}
  \label{pro:lgb_inherits_tf}
  If~\(A\)
  is a bornologically torsion-free \(\dvr\)\nb-algebra,
  then so is~\(\ling{A}\).
\end{proposition}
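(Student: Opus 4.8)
The plan is to verify directly the two conditions in the definition of bornological torsion\nb-freeness for $\ling{A}$: that $\dvgen_{\ling{A}}$ is injective, and that every subset of $\ling{A}$ that is bounded in $\ling{A}$ and contained in $\dvgen\cdot A$ equals $\dvgen$ times a subset that is bounded in $\ling{A}$. Injectivity is immediate, since $\ling{A}$ has the same underlying $\dvr$\nb-module as~$A$, and $A$ is algebraically torsion\nb-free because it is bornologically torsion\nb-free.

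For the second condition I would take a subset $T$ that is bounded in $\ling{A}$ with $T\subseteq\dvgen\cdot A$. By Lemma~\ref{equivalent_semidagger}\ref{equivalent_semidagger_2} there is $S\in\bdd_A$ with $T\subseteq\sum_{i=0}^\infty\dvgen^i S^{i+1}$, and after replacing~$S$ by the bounded $\dvr$\nb-submodule it generates (as in Section~\ref{Review_bornologies}) I may assume $S$ is a bounded $\dvr$\nb-submodule. The point is then the decomposition $\sum_{i=0}^\infty\dvgen^i S^{i+1}= S + \dvgen\cdot W$, where $W\defeq\sum_{j=0}^\infty\dvgen^j S^{j+2}$, which holds because the degree\nb-zero term $\dvgen^0 S^1=S$ is already a submodule and because $\sum_{i=1}^\infty\dvgen^i S^{i+1}=\dvgen\cdot W$ after reindexing. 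Crucially, $W$ is bounded in $\ling{A}$ by Lemma~\ref{equivalent_semidagger}\ref{equivalent_semidagger_3} applied with $c=1$, $d=2$.

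Now I would run the division-by-$\dvgen$ argument. Given $x\in T$, write $x=a+\dvgen b$ with $a\in S$ and $b\in W$. Since $x\in\dvgen\cdot A$ as well, $a=x-\dvgen b\in S\cap\dvgen A$; because $A$ is bornologically torsion\nb-free, $S\cap\dvgen A=\dvgen\cdot R$ for some $R\in\bdd_A$, so $a=\dvgen r$ with $r\in R$. Letting $y\in A$ be the unique element with $\dvgen y=x$ (torsion\nb-freeness of~$A$), we get $\dvgen y=\dvgen r+\dvgen b$, hence $y=r+b\in R+W$. Thus the set $T'\defeq\setgiven{y\in A}{\dvgen y\in T}$ is contained in $R+W$, which is bounded in $\ling{A}$ (since $R$ is bounded in~$A$, hence in $\ling{A}$, and $W$ is bounded in $\ling{A}$); and $\dvgen\cdot T'=T$ because $T\subseteq\dvgen\cdot A$. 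This gives exactly what is needed, so $\ling{A}$ is bornologically torsion\nb-free.

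I expect the only real subtlety to be the bookkeeping around the decomposition $x=a+\dvgen b$: one must separate the ``constant term'' $a\in S$ from an element of $\sum_i\dvgen^i S^{i+1}$ and recognise the remainder as $\dvgen$ times an element of a set that the linear growth bornology already declares bounded, which is precisely the $c=1$, $d=2$ case of Lemma~\ref{equivalent_semidagger}. Once that decomposition is available, dividing through by~$\dvgen$ is harmless by algebraic torsion\nb-freeness, and everything else is formal.
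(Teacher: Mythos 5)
Your proof is correct and takes essentially the same route as the paper's: both rest on the decomposition \(\sum_{i=0}^\infty\dvgen^i S^{i+1}=S+\dvgen\cdot\sum_{j=0}^\infty\dvgen^j S^{j+2}\), apply bornological torsion-freeness of~\(A\) to the degree-zero summand, and observe that the remainder is already \(\dvgen\) times a set of linear growth. Your elementwise division-by-\(\dvgen\) bookkeeping just spells out the paper's one-line inclusion \(\dvgen^{-1}T_1\subseteq\dvgen^{-1}T+T_2\).
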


\begin{proof}
  Let \(S \subseteq \dvgen \cdot A\)
  be bounded in~\(\ling{A}\).
  Then there is \(T\in\bdd_A\)
  with \(S \subseteq T_1 \defeq \sum_{i=0}^\infty \dvgen^i T^{i+1}\)
  by Lemma~\ref{equivalent_semidagger}.
  The subset \(T_2 \defeq \sum_{i=0}^\infty \dvgen^i T^{i+2}\)
  also has linear growth.  And
  \[
  T_1 = T + \sum_{i=1}^\infty \dvgen^i T^{i+1}
  = T + \sum_{i=0}^\infty \dvgen^{i+1} T^{i+2}
  = T + \dvgen T_2.
  \]
  Since~\(T\)
  is bounded in~\(A\)
  and~\(A\)
  is bornologically torsion-free,
  \(\dvgen^{-1} \cdot T \defeq \setgiven{x\in A}{\dvgen \cdot x \in
    T}\)
  is also bounded.  We have
  \(\dvgen^{-1}S \subseteq \dvgen^{-1} T_1 \subseteq \dvgen^{-1}\cdot
  T + T_2\).  This is bounded in~\(\ling{A}\).
\end{proof}

The following proposition answers a question by Guillermo
Corti\~nas:

\begin{proposition}
  \label{pro:tensor_tf}
  Let \(M\)
  and~\(N\)
  be bornological \(\dvr\)\nb-modules.
  If \(M\)
  and~\(N\)
  are bornologically torsion-free, then so is \(M\otimes N\)
  with the tensor product bornology.
\end{proposition}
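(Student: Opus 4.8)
The plan is to show directly that multiplication by~$\dvgen$ on $M\otimes N$ is a bornological embedding; by the definition of bornological torsion\nb-freeness this is precisely what must be proved. Since $\dvr$ is a discrete valuation ring, hence a principal ideal domain, the torsion\nb-free modules $M$ and~$N$ are flat, so $M\otimes_\dvr N$ is flat and in particular algebraically torsion\nb-free, which makes $\dvgen_{M\otimes N}$ injective. It then remains to show that $\{z\in M\otimes N : \dvgen z\in B\}$ is bounded for every bounded subset $B\subseteq M\otimes N$. By the definition of the tensor product bornology, $B$ is contained in $\langle S\otimes T\rangle$, the $\dvr$\nb-submodule of $M\otimes_\dvr N$ generated by the elementary tensors $s\otimes t$ with $s\in S$, $t\in T$, for some bounded $\dvr$\nb-submodules $S\subseteq M$ and $T\subseteq N$; so I may assume $B=\langle S\otimes T\rangle$.

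Writing $\dvgen^{-1}S\defeq\{m\in M:\dvgen m\in S\}$ and likewise $\dvgen^{-1}T$, the heart of the argument is the inclusion
\[
\{z\in M\otimes N : \dvgen z\in\langle S\otimes T\rangle\}\ \subseteq\ \bigl\langle(\dvgen^{-1}S)\otimes(\dvgen^{-1}T)\bigr\rangle .
\]
Granting this, the proof is finished: since $M$ and~$N$ are bornologically torsion\nb-free, $\dvgen_M$ and $\dvgen_N$ are bornological embeddings, so $\dvgen^{-1}S=\dvgen_M^{-1}(S)$ and $\dvgen^{-1}T=\dvgen_N^{-1}(T)$ are bounded $\dvr$\nb-submodules of $M$ and~$N$, whence the right-hand side is bounded in $M\otimes N$.

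To prove the inclusion, take $z\in M\otimes N$ with $\dvgen z=\sum_{k=1}^r s_k\otimes t_k$, $s_k\in S$, $t_k\in T$. Reducing modulo~$\dvgen$ and using the natural isomorphism $(M\otimes_\dvr N)/\dvgen(M\otimes_\dvr N)\cong(M/\dvgen M)\otimes_\resf(N/\dvgen N)$, one gets $\sum_k\bar s_k\otimes\bar t_k=0$ there. Choosing a basis $\bar t_1',\dots,\bar t_u'$ of the span of the~$\bar t_k$ (a subspace of the image of~$T$ in $N/\dvgen N$), writing $\bar t_k=\sum_j\bar\mu_{kj}\bar t_j'$, and using exactness of $(M/\dvgen M)\otimes_\resf(-)$, one obtains $\resf$\nb-linear relations $\sum_k\bar\mu_{kj}\bar s_k=0$ in $M/\dvgen M$. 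Lift $\bar\mu_{kj}$ to $\mu_{kj}\in\dvr$ and $\bar t_j'$ to $t_j'\in T$. Then $\sum_k\mu_{kj}s_k\in S\cap\dvgen M=\dvgen\cdot(\dvgen^{-1}S)$, say $\sum_k\mu_{kj}s_k=\dvgen\rho_j$ with $\rho_j\in\dvgen^{-1}S$, and $t_k-\sum_j\mu_{kj}t_j'\in T\cap\dvgen N=\dvgen\cdot(\dvgen^{-1}T)$, say $t_k-\sum_j\mu_{kj}t_j'=\dvgen\sigma_k$ with $\sigma_k\in\dvgen^{-1}T$. Substituting,
\begin{multline*}
\dvgen z=\sum_k s_k\otimes t_k
=\sum_j\Bigl(\sum_k\mu_{kj}s_k\Bigr)\otimes t_j'+\dvgen\sum_k s_k\otimes\sigma_k\\
=\dvgen\Bigl(\sum_j\rho_j\otimes t_j'+\sum_k s_k\otimes\sigma_k\Bigr),
\end{multline*}
so $z=\sum_j\rho_j\otimes t_j'+\sum_k s_k\otimes\sigma_k$ by torsion\nb-freeness of $M\otimes N$. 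Since $\rho_j,s_k\in\dvgen^{-1}S$ and $t_j',\sigma_k\in\dvgen^{-1}T$ (using $T\subseteq\dvgen^{-1}T$), this element lies in $\langle(\dvgen^{-1}S)\otimes(\dvgen^{-1}T)\rangle$, as needed.

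The main obstacle is that the obvious shortcuts fail. One cannot reduce to bornological $\dvf$\nb-vector spaces, where the statement is trivial because multiplication by~$\dvgen$ is a bornological isomorphism: the condition ``$B$ bounded in $\dvf\otimes(M\otimes N)$'' unwinds to ``$\dvgen^k B$ is bounded in $M\otimes N$ for some~$k$'', which is equivalent to the assertion being proved; and $\dvgen$\nb-divisibility of an element of $M\otimes N$ does not pull back through the colimit presentation of the tensor bornology either. The passage to the residue field, where the relation $\sum\bar s_k\otimes\bar t_k=0$ is governed by ordinary linear algebra, is exactly what circumvents this. The only other delicate point is that $M\otimes_\dvr N$ is genuinely torsion\nb-free, which is where flatness over the discrete valuation ring is used.
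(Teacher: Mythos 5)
Your proof is correct. The outer reduction is the same as the paper's: both arguments come down to the inclusion
\[
\setgiven{z\in M\otimes N}{\dvgen z\in S\otimes T}\subseteq (\dvgen^{-1}S)\otimes(\dvgen^{-1}T),
\]
after which boundedness of \(\dvgen^{-1}S\) and \(\dvgen^{-1}T\) (from bornological torsion-freeness of \(M\) and~\(N\)) finishes the argument. But the key inclusion is established by a genuinely different mechanism. The paper restricts to the finitely generated free submodules \(A\subseteq M\) and \(B\subseteq N\) spanned by the tensor factors occurring in \(u\) and \(\dvgen u\), and invokes the elementary divisor theorem over the discrete valuation ring to choose bases of \(A\) and~\(B\) adapted to \(A\cap S\) and \(B\cap T\); divisibility of~\(u\) is then read off coefficientwise in the basis \(a_i\otimes b_j\) of \(A\otimes B\). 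You instead reduce modulo~\(\dvgen\), use the base-change isomorphism \((M\otimes N)/\dvgen(M\otimes N)\cong (M/\dvgen M)\otimes_\resf(N/\dvgen N)\) together with linear algebra over the residue field to rewrite \(\dvgen z=\sum s_k\otimes t_k\) with an explicit factor of~\(\dvgen\), and then cancel that factor using algebraic torsion-freeness of \(M\otimes N\). All the individual steps check out: the span of the \(\bar t_k\) lies in the image of~\(T\), so the basis lifts into~\(T\); exactness of \(\otimes_\resf\) over a field justifies the relations \(\sum_k\bar\mu_{kj}\bar s_k=0\); and \(S\cap\dvgen M=\dvgen\cdot(\dvgen^{-1}S)\), \(T\cap\dvgen N=\dvgen\cdot(\dvgen^{-1}T)\) because \(S\) and~\(T\) are submodules. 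Your route avoids the structure theory of finitely generated modules over a principal ideal domain entirely (no need for \(A\cap S\) to be free, no Smith normal form), so it is somewhat more elementary and would survive in settings where that structure theory is unavailable; the paper's version makes the exact denominators visible, exhibiting \(u\) as a combination of the elements \(\dvgen^{(\alpha_i-1)_+}a_i\otimes\dvgen^{(\beta_j-1)_+}b_j\).
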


\begin{proof}
  Since \(M\)
  and~\(N\)
  are torsion-free, so is \(M\otimes N\),
  that is, multiplication by~\(\dvgen\)
  on \(M\otimes N\)
  is injective.  Let \(U\subseteq M\otimes N\)
  be a subset such that \(\dvgen U\)
  is bounded.  We must show that~\(U\)
  is bounded.  By the definition of the tensor product bornology,
  there are bounded \(\dvr\)\nb-submodules
  \(S\subseteq M\),
  \(T\subseteq N\)
  such that \(\dvgen\cdot U \subseteq S\otimes T\).
  Define
  \[
  \dvgen^{-1} S \defeq \setgiven{x\in M}{\dvgen x\in S},\qquad
  \dvgen^{-1} T \defeq \setgiven{y\in N}{\dvgen y\in T}.
  \]
  These subsets are bounded because
  \(\dvgen\cdot (\dvgen^{-1} S) \subseteq S\)
  and \(\dvgen\cdot (\dvgen^{-1} T) \subseteq T\)
  and \(M\)
  and~\(N\)
  are bornologically torsion-free. 
  We claim that \(U\subseteq \dvgen^{-1} S \otimes \dvgen^{-1} T\).
  This shows that~\(U\)
  is bounded.

  Let \(u\in U\).
  We may write \(u = \sum_{j=1}^N x_j \otimes y_j\)
  with \(x_j\in M\), \(y_j\in N\).
  Since \(\dvgen \cdot u\in S\otimes T\),
  we may write \(\dvgen u = \sum_{k=1}^M z_k\otimes w_k\)
  with \(z_k\in S\),
  \(w_k\in T\).
  Let \(A\subseteq M\)
  and \(B\subseteq N\)
  be the \(\dvr\)\nb-submodules
  generated by the elements \(x_j,z_k\)
  and \(y_j,w_k\),
  respectively.  These submodules are finitely generated and
  torsion-free, hence free.
  And the canonical map
  \(A\otimes B \to M \otimes N\)
  is injective.
  The submodules \(A\cap S\)
  and \(B\cap T\)
  are also free.  Any \(\dvr\)\nb-module homomorphism between
  finitely generated free \(\dvr\)\nb-modules
  may be brought into diagonal form with entries in
  \(\{\dvgen^\N\} \cup\{0\}\)
  along the diagonal by choosing appropriate bases in the
  \(\dvr\)\nb-modules.
  Therefore, there are \(\dvr\)\nb-module
  bases \(a_1,\dotsc,a_n\)
  and \(b_1,\dotsc,b_m\)
  of \(A\)
  and~\(B\),
  respectively,
  and \(1\le n' \le n\),
  \(1\le m' \le m\),
  \(0 \ge \alpha_1 \ge \alpha_2 \ge \dotsb \ge \alpha_{n'}\),
  \(0 \ge \beta_1 \ge \beta_2 \ge \dotsb \ge \beta_{m'}\),
  such that \(\dvgen^{\alpha_i}\cdot a_i\)
  and \(\dvgen^{\beta_j}\cdot b_j\)
  for \(1 \le i \le n'\)
  and \(1 \le j \le m'\)
  are \(\dvr\)\nb-module
  bases of \(A\cap S\)
  and \(B\cap T\), respectively.
  We may write \(u\in A \otimes B\)
  uniquely in this basis as
  \(u = \sum_{i,j} u_{i,j} a_i \otimes b_j\)
  with \(u_{i,j}\in \dvr\).
  By assumption,
  \(\dvgen\cdot u = \sum_{k=1}^M y_k \otimes w_k \in (S\cap A)
  \otimes (T\cap B)\).
  By construction, the elements
  \(\dvgen^{\alpha_i+\beta_j} a_i \otimes b_j\)
  form a \(\dvr\)\nb-module
  basis of \((S\cap A) \otimes (T\cap B)\).
  Since the coefficients of~\(\dvgen u\)
  in the basis \(a_i \otimes b_j\)
  of \(A\otimes B\)
  are unique, it follows that \(u_{i,j}=0\)
  if \(i>n'\) or \(j>m'\),
  and \(\dvgen u_{i,j} \in \dvgen^{\alpha_i+\beta_j} \dvr\)
  for \(1\le i \le n'\)
  and \(1\le j \le m'\).
  Hence~\(u\)
  is a \(\dvr\)\nb-linear
  combination of
  \(\dvgen^{(\alpha_i-1)_+} a_i \otimes \dvgen^{(\beta_j-1)_+} b_j\)
  for \(1 \le i \le n'\), \(1 \le j \le m'\),
  where \(n_+ \defeq \max \{n,0\}\).
  Since \(\dvgen^{(\alpha_i-1)_+} a_i \in \dvgen^{-1} S\),
  \(\dvgen^{(\alpha_j-1)_+} b_j \in \dvgen^{-1} T\),
  this implies \(u \in \dvgen^{-1} S \otimes \dvgen^{-1} T\).
  Since \(u\in U\)
  was arbitrary, we get
  \(U \subseteq \dvgen^{-1} S \otimes \dvgen^{-1} T\).
\end{proof}

Theorem~\ref{the:completion_tf} and Proposition~\ref{pro:tensor_tf}
imply that bornological torsion-freeness for complete bornological
\(\dvr\)\nb-modules
is hereditary for completed tensor products.

\section{Dagger algebras}
\label{sec:dagger}

\begin{definition}
  A \emph{dagger algebra} is a complete, bornologically torsion-free,
  semi-dagger algebra.
\end{definition}

\begin{theorem}
  Let \(A \xrightarrow{i} B \xrightarrow{p} C\)
  be an extension of bornological \(\dvr\)\nb-algebras.
  If~\(A\) and~\(C\) are dagger algebras, so is~\(B\).
\end{theorem}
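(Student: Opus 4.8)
The plan is to check the three defining properties of a dagger algebra for~\(B\)
one at a time, invoking in each case the permanence theorem for extensions
already established above.

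First I would note that~\(B\)
is semi-dagger. Since \(A\)
and~\(C\)
are semi-dagger, this is immediate from Theorem~\ref{the:extension_lgb},
which requires no further hypotheses on the extension.

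Next I would show that~\(B\)
is bornologically torsion-free. Since \(A\)
and~\(C\)
are bornologically torsion-free, Theorem~\ref{the:extension_tf} applies
verbatim and yields that~\(B\) is bornologically torsion-free.

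Finally, completeness. Here I would use the second statement of
Theorem~\ref{the:extension_complete}.\ref{the:extension_complete_3}:
if \(A\)
and~\(C\)
are complete and~\(C\)
is torsion-free, then~\(B\)
is complete. The only small point to verify is that~\(C\),
being bornologically torsion-free, is in particular algebraically
torsion-free: the map~\(\dvgen_C\)
is a bornological embedding, hence injective, and since~\(\dvr\)
is a discrete valuation ring every nonzero element of~\(\dvr\)
is~\(\dvgen^n\)
times a unit, so injectivity of~\(\dvgen_C\)
forces~\(C\)
to have no torsion. With this observation the hypotheses of
Theorem~\ref{the:extension_complete}.\ref{the:extension_complete_3} are met,
so~\(B\)
is complete. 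Combining the three steps, \(B\)
is complete, bornologically torsion-free and semi-dagger, hence a dagger
algebra. I do not expect any genuine obstacle; the only thing to be careful
about is the routine passage from bornological torsion-freeness of~\(C\)
to its ordinary torsion-freeness, which is precisely what allows us to apply
the completeness result.
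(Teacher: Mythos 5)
Your proposal is correct and follows essentially the same route as the paper: invoke Theorems \ref{the:extension_lgb}, \ref{the:extension_tf}, and \ref{the:extension_complete}.\ref{the:extension_complete_3} (the latter using that~\(C\), being bornologically torsion-free, is algebraically torsion-free). Your explicit verification of that last point is exactly the parenthetical remark the paper makes.
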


\begin{proof}
  All three properties defining dagger algebras are hereditary for
  extensions by Theorems \ref{the:extension_complete} (because~\(C\)
  is torsion-free), \ref{the:extension_lgb}
  and~\ref{the:extension_tf}.
\end{proof}

We have already seen that there are universal arrows
\(A\to \torf{A} \subseteq \dvf\otimes A\),
\(A\to\ling{A}\),
\(A\to \comb{A}\)
from a bornological algebra~\(A\)
to a bornologically torsion-free algebra, to a semi-dagger algebra, and to
a complete bornological algebra, respectively.  We now combine them to
a universal arrow to a dagger algebra:

\begin{theorem}
  \label{dagger_completion}
  Let~\(A\)
  be a bornological algebra.  Then the canonical map
  from~\(A\)
  to \(A^\updagger \defeq \comling{(\torf{A})}\)
  is the universal arrow from~\(A\)
  to a dagger algebra.  That is, any bounded algebra homomorphism
  from~\(A\)
  to a dagger algebra factors uniquely through~\(A^\updagger\).
  If~\(A\)
  is already bornologically torsion-free, then
  \(A^\updagger \cong \comling{A}\).
\end{theorem}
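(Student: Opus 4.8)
The plan is to check first that $A^\updagger=\comling{(\torf{A})}$ is a dagger algebra, and then to read off its universal property by composing three universal arrows already established in the paper: the torsion-free quotient $A\to\torf{A}$ of Proposition~\ref{universal_torsionfree}, the linear growth bornology $\torf{A}\to\ling{(\torf{A})}$, and the completion $\ling{(\torf{A})}\to\comling{(\torf{A})}$. For the first point, $A^\updagger$ is complete because it is a completion, and it is semi-dagger by Proposition~\ref{pro:completion_semidagger} applied to the semi-dagger algebra $\ling{(\torf{A})}$. Bornological torsion-freeness propagates along the chain: $\torf{A}\subseteq\dvf\otimes A$ is bornologically torsion-free by Proposition~\ref{universal_torsionfree}, hence $\ling{(\torf{A})}$ is by Proposition~\ref{pro:lgb_inherits_tf}, hence $\comling{(\torf{A})}$ is by Theorem~\ref{the:completion_tf}. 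Here one uses that $\dvf\otimes A$ is a bornological $\dvr$\nb-algebra with $\iota_A$ an algebra homomorphism, so $\torf{A}$ is a bornological subalgebra, and that the linear growth bornology and the completion of a bornological algebra are again bornological algebras.

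For the universal property, let $f\colon A\to B$ be a bounded algebra homomorphism into a dagger algebra~$B$. Since~$B$ is bornologically torsion-free, Proposition~\ref{universal_torsionfree} gives a unique bounded linear factorisation $\torf{A}\to B$, and it is an algebra homomorphism because $A\to\torf{A}$ is a surjective algebra homomorphism and~$f$ is multiplicative. Since~$B$ is semi-dagger, this homomorphism factors uniquely through $\ling{(\torf{A})}$ by the universal property of the linear growth bornology. Since~$B$ is complete, the resulting homomorphism $\ling{(\torf{A})}\to B$ factors uniquely through the completion $A^\updagger$; it remains multiplicative because $\ling{(\torf{A})}$ has dense image in $A^\updagger$ and multiplication in the separated, complete algebra~$B$ is bounded and hence continuous. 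Composing the three resulting natural bijections of hom-sets shows that bounded algebra homomorphisms $A\to B$ correspond bijectively and naturally to bounded algebra homomorphisms $A^\updagger\to B$ for every dagger algebra~$B$; this is the asserted universal property, and in particular the factorisation is unique.

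Finally, if~$A$ is already bornologically torsion-free, then $\iota_A\colon A\to\dvf\otimes A$ is a bornological embedding by Proposition~\ref{pro:bornological_torsion_characterisation}, so $A\to\torf{A}=\iota_A(A)$ is an isomorphism of bornological algebras, and applying the functor $\comling{(-)}$ gives $A^\updagger=\comling{(\torf{A})}\cong\comling{A}$ compatibly with the canonical maps from~$A$. I do not expect any serious obstacle: the substantive content is already in Proposition~\ref{universal_torsionfree}, Proposition~\ref{pro:lgb_inherits_tf} and Theorem~\ref{the:completion_tf}. The only delicate points are the routine upgrades of module-level universal properties to algebra-level ones -- that each intermediate object is a bornological algebra and that each factorising map is multiplicative -- which follow in every case from surjectivity or density of the relevant canonical map together with continuity of multiplication in~$B$.
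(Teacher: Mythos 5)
Your proof is correct and follows essentially the same route as the paper: verify that $A^\updagger$ is a dagger algebra via Propositions \ref{pro:completion_semidagger}, \ref{pro:lgb_inherits_tf} and Theorem \ref{the:completion_tf}, then chain the three universal properties of $\torf{(-)}$, $\ling{(-)}$ and completion. The extra care you take to check multiplicativity of the factorising maps is a reasonable elaboration of what the paper leaves implicit.
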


\begin{proof}
  The bornological algebra~\(A^\updagger\)
  is complete by construction.  It is semi-dagger by
  Proposition~\ref{pro:completion_semidagger}.  And it is
  bornologically torsion-free by Proposition~\ref{pro:lgb_inherits_tf}
  and Theorem~\ref{the:completion_tf}.  So it is a dagger algebra.
  Let~\(B\)
  be a dagger algebra.  A bounded homomorphism \(A\to B\)
  factors uniquely through a bounded homomorphism \(\torf{A}\to B\)
  by Proposition~\ref{universal_torsionfree} because~\(B\)
  is bornologically torsion-free.  This factors uniquely through a
  bounded homomorphism \(\ling{(\torf{A})}\to B\)
  because~\(B\)
  is semi-dagger.  And this factors uniquely through a bounded
  homomorphism \(\comling{(\torf{A})}\to B\)
  because~\(B\)
  is complete.  So~\(A^\updagger\)
  has the asserted universal property.  If~\(A\)
  is bornologically torsion-free, then \(A\cong \torf{A}\)
  and hence \(A^\updagger \cong \comling{A}\).
\end{proof}

\begin{definition}
  We call~\(A^\updagger\)
  the \emph{dagger completion} of the bornological
  \(\dvr\)\nb-algebra~\(A\).
\end{definition}

\section{Dagger completions of monoid algebras}
\label{sec:dagger_monoid}

As a simple illustration, we describe the dagger completions of monoid
algebras.  The monoid algebra of~\(\N^j\)
is the algebra of polynomials in \(j\)~variables,
and its dagger completion is the Monsky--Washnitzer algebra of
overconvergent power series equipped with a canonical bornology
(see~\cite{Cortinas-Cuntz-Meyer-Tamme:Nonarchimedean}).  The case of
general monoids is similar.

The monoid algebra~\(\dvr[S]\)
of~\(S\)
over~\(\dvr\)
is defined by its universal property: if~\(B\)
is a unital \(\dvr\)\nb-algebra,
then there is a natural bijection between algebra homomorphisms
\(\dvr[S] \to B\)
and monoid homomorphisms \(S \to (B,\cdot)\)
into the multiplicative monoid of~\(B\).
More concretely, \(\dvr[S]\)
is the free \(\dvr\)\nb-module
with basis~\(S\)
or, equivalently, the \(\dvr\)\nb-module
of formal linear combinations of the form
\[
\sum_{s\in S} x_s \delta_s,\qquad
x_s \in \dvr,\ s\in S,
\]
with \(x_s = 0\)
for all but finitely many~\(s\),
and equipped with the multiplication
\[
\sum_{s\in S} x_s \delta_s * \sum_{t\in S} y_t \delta_t
= \sum_{s,t\in S} x_s y_t \delta_{s\cdot t}.
\]
We give~\(\dvr[S]\)
the fine bornology.  Then it has an analogous universal property in
the category of bornological \(\dvr\)\nb-algebras.
So the dagger completion \(\dvr[S]^\updagger\)
is a dagger algebra with the property that bounded algebra
homomorphisms \(\dvr[S]^\updagger\to B\)
for a dagger algebra~\(B\)
are in natural bijection with monoid homomorphisms
\(S\to (B,\cdot)\).

Assume first that~\(S\)
has a finite generating set~\(F\).
Let \(F^n\subseteq S\)
be the set of all words \(s_1\dotsm s_k\)
with \(s_1,\dotsc,s_k\in F\)
and \(k\le n\).
This gives an increasing filtration on~\(S\)
with \(F^0 = \{1\}\)
and \(S = \bigcup_{n=0}^\infty F^n\).
For \(s\in S\),
we define \(\ell(s) \in \N\)
as the smallest~\(n\)
with \(s\in F^n\).
This is the \emph{word length} generated by~\(F\).
Let \(\dvr[F^n] \subseteq \dvr[S]\)
be the free \(\dvr\)\nb-submodule
of~\(\dvr[S]\)
spanned by~\(F^n\).
Any finitely generated \(\dvr\)\nb-submodule
of~\(\dvr[S]\)
is contained in~\(\dvr[F^n]\)
for some \(n\in \N\).
By Lemma~\ref{equivalent_semidagger}, a subset of~\(\dvr[S]\)
has linear growth if and only if it is contained in
\(M_n \defeq \sum_{j=0}^\infty \dvgen^j (\dvr[F^n])^{j+1}\)
for some \(n\in \N_{\ge 1}\).  That is,
\[
\ling{M} = \varinjlim M_n.
\]
Recall the valuation \(\nu \colon \dvr \to \N \cup \{\infty\}\)
defined by
\[
\nu(x) \defeq
\sup {}\setgiven{n\in \N}{x \in \dvgen^n \dvr}.
\]
By definition, the submodule~\(M_n\)
consists of all finite sums of terms~\(x_s \delta_s\)
with \(x_s \in \dvgen^j\cdot \dvr\)
and \(\ell(s) \le n(j+1)\)
for some \(j\in\N\)
or, equivalently, \(\ell(s)/n \le j+1 \le \nu(x_s)+1\).
That is, \(M_n\)
contains a finite sum \(\sum_{s\in S} x_s \delta_s\)
with \(x_s\in \dvr\)
and \(x_s=0\)
for all but finitely many \(s\in S\)
if and only if \(\nu(x_s) + 1\ge \ell(s)/n\)
for all \(s\in S\).
The \(\dvgen\)\nb-adic
completion~\(\coma{M_n}\)
of~\(M_n\)
is the set of all formal power series \(\sum_{s\in S} x_s \delta_s\)
such that \(x_s \in \dvr\),
\(\nu(x_s) + 1\ge \ell(s)/n\)
for all \(s\in S\)
and \(\lim_{\ell(s)\to\infty} \nu(x_s) +1 - \ell(s)/n = \infty\).
This implies \(x_s \to 0\)
in the \(\dvgen\)\nb-adic
norm, so that \(\coma{M_n} \subseteq \coma{\dvr[S]}\).
So the extension \(\coma{M_n} \to \coma{M_{n+1}}\)
of the inclusion map \(M_n \to M_{n+1}\)
remains injective.  Therefore, \(\varinjlim \coma{M_n}\)
is separated, and it is contained in~\(\coma{\dvr[S]}\).
Proposition~\ref{completions_exist} implies
\[
\dvr[S]^\updagger = \varinjlim \coma{M_n}.
\]

Elements of~\(\coma{\dvr[S]}\)
are formal series \(\sum_{s\in S}x_s \delta_s\)
with \(x_s\in \dvr\)
for all \(s\in S\)
and \(\lim {} \abs{x_s} = 0\).
We have seen above that such a formal series belongs to~\(\coma{M_n}\)
if and only if \(\nu(x_s) + 1\ge \ell(s)/n\)
for all \(s\in S\)
and \(\lim_{\ell(s)\to\infty} \nu(x_s) +1 - \ell(s)/n = \infty\).
If \(0<1/n<c\),
then \(\nu(x_s) + 1\ge c\ell(s)\)
implies \(\nu(x_s) + 1\ge \ell(s)/n\)
and \(\lim_{\ell(s)\to\infty} \nu(x_s) +1 - \ell(s)/n = \infty\).
Thus all \(\sum_{s\in S}x_s \delta_s \in \coma{\dvr[S]}\)
with \(\nu(x_s) + 1\ge c\ell(s)\)
belong to \(\coma{M_n}\).
Conversely, all elements of \(\coma{M_n}\)
satisfy this for \(c=1/n\).
Letting \(c\)
and~\(n\)
vary, we see that~\(\dvr[S]^\updagger\)
is the set of all \(\sum_{s\in S}x_s \delta_s\)
in~\(\coma{\dvr[S]}\)
for which there is \(c>0\)
with
\begin{equation}
  \label{eq:dagger_monoid_growth}
  \nu(x_s) + 1\ge c\ell(s)
  \qquad\text{for all }s\in S,
\end{equation}
and that a subset of~\(\dvr[S]^\updagger\)
is bounded if and only if all its elements
satisfy~\eqref{eq:dagger_monoid_growth} for the same \(c>0\).
The growth condition~\eqref{eq:dagger_monoid_growth} does not depend
on the word length function~\(\ell\)
because the word length functions for two different generating sets
of~\(S\)
are related by linear inequalities \(\ell \le a \ell'\)
and \(\ell' \le a \ell\) for some \(a>0\).

Now we drop the assumption that~\(S\)
be finitely generated.  Then we may write~\(S\)
as the increasing union of its finitely generated submonoids.  By the
universal property, the monoid algebra of~\(S\)
with the fine bornology is a similar inductive limit in the category
of bornological \(\dvr\)\nb-algebras,
and its dagger algebra is the inductive limit in the category of
dagger algebras.  Since
\(\dvr[S']^\updagger \subseteq \coma{\dvr[S']} \subseteq
\coma{\dvr[S]}\)
for any finitely generated \(S'\subseteq S\),
we may identify this inductive limit with a subalgebra of
\(\coma{\dvr[S]}\)
as well, namely, the union of \(\dvr[S']^\updagger\)
over all finitely generated submonoids \(S'\subseteq S\).
That is, \(\dvr[S]^\updagger\)
is the set of elements of~\(\coma{\dvr[S]}\)
that are supported in some finitely generated submonoid
\(S'\subseteq S\)
and that satisfy~\eqref{eq:dagger_monoid_growth} for some length
function on~\(S'\).

We may also twist the monoid algebra.  Let
\(\dvr^\times = \setgiven{x\in\dvr}{\abs{x}=1}\)
and let \(c\colon S\times S\to \dvr^\times\)
be a normalised \(2\)\nb-cocycle, that is,
\begin{equation}
  \label{eq:twist_cocycle}
  c(r,s\cdot t)\cdot c(s,t) = c(r\cdot s,t) \cdot c(r,s),
  \qquad
  c(s,1) = c(1,s) = 1
\end{equation}
for all \(r,s,t\in S\).
The \emph{\(c\)\nb-twisted
  monoid algebra of~\(S\)},
\(\dvr[S,c]\),
is the \(\dvr\)\nb-module \(\dvr[S]\) with the twisted multiplication
\begin{equation}
  \label{eq:twisted_mult}
  \sum_{s\in S} x_s \delta_s * \sum_{t\in S} y_t \delta_t
  = \sum_{s,t\in S} x_s y_t c(s,t)\cdot \delta_{s\cdot t}.
\end{equation}
The condition~\eqref{eq:twist_cocycle} is exactly what is needed to
make this associative and unital with unit~\(\delta_1\).
Since we assume~\(c\)
to have values in~\(\dvr^\times\),
the twist does not change the linear growth bornology.  Therefore, the
dagger completion~\(\dvr[S,c]^\updagger\)
consists of all infinite sums \(\sum_{s\in S} x_s \delta_s\)
that are supported in a finitely generated submonoid of~\(S\)
and satisfy the growth condition~\eqref{eq:dagger_monoid_growth}, and
a subset is bounded if and only if all its elements satisfy these two
conditions uniformly.  Only the multiplication changes and is now
given by~\eqref{eq:twisted_mult}.

\begin{example}
  Let \(S=(\Z^2,+)\)
  with the unit element~\(0\).
  Define \(c((s_1,s_2),(t_1,t_2)) \defeq \lambda^{s_2\cdot t_1}\)
  for some \(\lambda\in\dvr^\times\).
  This satisfies~\eqref{eq:twist_cocycle}.  The resulting twisted
  convolution algebra is an analogue of a noncommutative torus
  over~\(\dvr\).
  Indeed, let \(U_1 \defeq \delta_{(1,0)}\)
  and \(U_2 \defeq \delta_{(0,1)}\)
  as elements of \(\dvr[\Z^2,c]\).
  Then \(\delta_{(-1,0)} = U_1^{-1}\)
  and \(\delta_{(0,-1)} = U_2^{-1}\)
  are inverse to them, and
  \(\delta_{(s_1,s_2)} = U_1^{s_1}\cdot U_2^{s_2}\).
  So \(U_1,U_2\)
  generate~\(\dvr[\Z^2,c]\)
  as a \(\dvr\)\nb-algebra.
  They satisfy the commutation relation
  \begin{equation}
    \label{eq:nc_torus}
    U_2\cdot U_1 = \lambda \cdot U_1\cdot U_2.
  \end{equation}
  And this already dictates the multiplication table
  in~\(\dvr[\Z^2,c]\).
  The dagger completion \(\dvr[\Z^2,c]^\updagger\)
  is isomorphic as a bornological \(\dvr\)\nb-module
  to the Monsky--Washnitzer completion of the Laurent polynomial
  algebra~\(\dvr[U_1^{\pm1},U_2^{\pm1}]\),
  equipped with a twisted multiplication
  satisfying~\eqref{eq:nc_torus}.
\end{example}

\section{Dagger completions of crossed products}
\label{sec:crossed}

Let~\(A\)
be a unital, bornological \(\dvr\)\nb-algebra,
let~\(S\)
be a finitely generated monoid and let
\(\alpha \colon S \to \Endo(A)\)
be an action of~\(S\)
on~\(A\)
by bounded algebra homomorphisms.  The \emph{crossed product}
\(A\rtimes_\alpha S\)
is defined as follows.  Its underlying bornological \(\dvr\)\nb-module
is \(A \rtimes_\alpha S = \bigoplus_{s\in S} A\)
with the direct sum bornology.  So elements of \(A \rtimes_\alpha S\)
are formal linear combinations \(\sum_{s \in S} a_s \delta_s\)
with \(a_s\in A\)
and \(a_s=0\)
for all but finitely many \(s\in S\).
The multiplication on \(A \rtimes_\alpha S\) is defined by
\[
\Bigl(\sum_{s \in S}a_s \delta_s\Bigr)
\cdot \Bigl(\sum_{t \in S}b_t \delta_t\Bigr)
\defeq \sum_{s,t \in S} a_s \alpha_s(b_t) \delta_{s t}.
\]
This makes \(A\rtimes_\alpha S\)
a bornological \(\dvr\)\nb-algebra.  What is its dagger completion?

It follows easily from the universal property that defines
\(A\subseteq A\rtimes_\alpha S\) that
\[
(A\rtimes_\alpha S)^\updagger \cong (A^\updagger\rtimes_{\alpha^\updagger} S)^\updagger;
\]
here~\(\alpha^\updagger\)
is the canonical extension of~\(\alpha\)
to the dagger completion~\(A^\updagger\),
which exists because the latter is functorial for bounded algebra
homomorphisms.  Therefore, it is no loss of generality to assume
that~\(A\)
is already a dagger algebra.  It is easy to show that
\((A\rtimes S)^\updagger\)
is the inductive limit of the dagger completions
\((A\rtimes S')^\updagger\),
where~\(S'\)
runs through the directed set of finitely generated submonoids
of~\(S\).
Hence we may also assume that~\(S\)
is finitely generated to simplify.  First we consider the following
special case:

\begin{definition}
  \label{def:unif_bounded}
  The action \(\alpha\colon S\to\Endo(A)\)
  is called \emph{uniformly bounded} if any \(U\in\bdd_A\)
  is contained in an \(\alpha\)\nb-invariant
  \(T\in\bdd_A\);
  \(\alpha\)\nb-invariance
  means \(\alpha_s(T) = T\) for all \(s\in S\).
\end{definition}

If~\(T\)
is \(\alpha\)\nb-invariant,
so is the \(\dvr\)\nb-module
generated by~\(T\).
Therefore, \(\alpha\)
is uniformly bounded if and only if any bounded subset of~\(A\)
is contained in a bounded, \(\alpha\)\nb-invariant
\(\dvr\)\nb-submodule.
If~\(A\)
is complete, then the image of~\(\coma{T}\)
in~\(A\)
is also \(\alpha\)\nb-invariant
because the maps~\(\alpha_s\)
are bornological isomorphisms.  Hence we may assume in this case
that~\(T\)
in Definition~\ref{def:unif_bounded} is a bounded,
\(\alpha\)\nb-invariant
\(\dvgen\)\nb-adically complete \(\dvr\)\nb-submodule.

\begin{proposition}
  \label{pro:uniformly_bounded_induced_actions}
  Let~\(A\)
  carry a uniformly bounded action~\(\alpha\)
  of~\(S\).
  Then the induced actions on \(\comb{A}\),
  \(\torf{A}\),
  and~\(\ling{A}\)
  are uniformly bounded as well.  Hence so is the induced action
  on~\(A^\updagger\).
\end{proposition}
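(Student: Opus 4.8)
The plan is to prove uniform boundedness separately for each of the three constructions $M\mapsto\comb M$, $A\mapsto\torf A$, $A\mapsto\ling A$, and then to deduce the statement for $A^\updagger=\comling{(\torf A)}$ by composing them. In each case the induced action is the canonical one, because all three constructions are functorial for bounded algebra homomorphisms, so the only content is uniform boundedness. The common strategy: given $U\in\bdd_{F(A)}$ for the construction $F$ in question, the explicit description of $\bdd_{F(A)}$ produces a bounded \(\dvr\)\nb-submodule $S\subseteq A$ out of which $U$ is built; using that $\alpha$ is uniformly bounded, enlarge $S$ to an \(\alpha\)\nb-invariant bounded \(\dvr\)\nb-submodule $T\supseteq S$ in $A$; then the object built from $T$ inside $F(A)$ is a bounded \(\dvr\)\nb-submodule containing $U$, and it is invariant under the induced action. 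The invariance arguments will repeatedly use that $\alpha_s(T)=T$ means $\alpha_s|_T\colon T\to T$ is surjective, together with the fact that $F$ preserves such surjections.

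For $\ling A$ this is immediate from Lemma~\ref{equivalent_semidagger}: one has $U\subseteq\sum_{i=0}^\infty\dvgen^i S^{i+1}$ for some $S\in\bdd_A$, and for an \(\alpha\)\nb-invariant bounded \(\dvr\)\nb-submodule $T\supseteq S$ the set $V\defeq\sum_{i=0}^\infty\dvgen^i T^{i+1}$ is again bounded in $\ling A$ by the same lemma, is a \(\dvr\)\nb-submodule containing $U$, and is carried onto itself by each induced $\alpha_s$ since $\alpha_s$ is an algebra homomorphism, so $\alpha_s(\dvgen^i T^{i+1})=\dvgen^i\alpha_s(T)^{i+1}=\dvgen^i T^{i+1}$. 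For $\comb A$, Proposition~\ref{completions_exist} shows $U$ lies in the image of the \(\dvgen\)\nb-adic completion $\coma S$ in $\comb A$ for some $S\in\bdd_A$; enlarging $S$ to an \(\alpha\)\nb-invariant bounded \(\dvr\)\nb-submodule $T$, the image of $\coma T$ in $\comb A$ is a bounded \(\dvr\)\nb-submodule containing $U$. Invariance of this image reduces, via functoriality, to showing that $\coma{\alpha_s|_T}\colon\coma T\to\coma T$ is onto, and the key point -- which I would isolate as a lemma -- is that the \(\dvgen\)\nb-adic completion of a surjection of \(\dvr\)\nb-modules is again surjective. This is a Mittag--Leffler argument: for a surjection $\phi\colon T\to T$ the induced maps $\ker(T/\dvgen^{n+1}T\to T/\dvgen^{n+1}T)\to\ker(T/\dvgen^n T\to T/\dvgen^n T)$ are surjective, so the relevant $\varprojlim^1$ vanishes and $\varprojlim$ of the quotient maps stays surjective.

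The delicate case is $\torf A$, because it carries the subspace bornology from $\dvf\otimes A$ rather than the quotient bornology from $A$, so one cannot simply push forward. First I would check that the diagonal action $\mathrm{id}_\dvf\otimes\alpha$ on $\dvf\otimes A$ is uniformly bounded, with invariant bounded \(\dvr\)\nb-submodules of the shape $\dvgen^{-k}\cdot\iota_A(T)$ for $T\subseteq A$ an \(\alpha\)\nb-invariant bounded \(\dvr\)\nb-submodule and $k\in\N$; this is a one-line consequence of $\alpha_s(T)=T$. Given $U\in\bdd_{\torf A}$, it is bounded in $\dvf\otimes A$, so $\dvgen^N U\subseteq\iota_A(S)$ for some $N\in\N$ and $S\in\bdd_A$; enlarging $S$ to an \(\alpha\)\nb-invariant bounded $T$ gives $\dvgen^N U\subseteq\iota_A(T)$. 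Since $\torf A$ is bornologically torsion-free, multiplication by $\dvgen^N$ is a bornological embedding $\torf A\to\torf A$, so $T^*\defeq\setgiven{x\in\torf A}{\dvgen^N x\in\iota_A(T)}$ is a bounded \(\dvr\)\nb-submodule containing $U$. It then remains to verify that $T^*$ is invariant under the induced $\alpha_s$: the inclusion into $T^*$ is routine, and for the reverse one uses that the induced $\alpha_s$ commutes with multiplication by $\dvgen^N$, maps $\iota_A(T)=\iota_A(\alpha_s(T))$ onto itself, and that $\torf A$ is torsion-free, to solve $\alpha_s(x')=x$ with $x'\in T^*$ for each $x\in T^*$.

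With the three cases in hand, the final claim follows by composition: the $\torf A$ case gives a uniformly bounded action on $\torf A$, the $\ling A$ case applied to the algebra $\torf A$ gives a uniformly bounded action on $\ling{(\torf A)}$, and the $\comb M$ case applied to $\ling{(\torf A)}$ gives a uniformly bounded action on $\comling{(\torf A)}=A^\updagger$; functoriality of the three constructions identifies this with the canonical action on $A^\updagger$. The step I expect to need the most care is the invariance of $T^*$ in the $\torf A$ case, namely converting surjectivity of $\alpha_s$ on $T$ into invariance of the rescaled submodule $T^*$ in the presence of the factor $\dvgen^{-N}$; should that prove awkward, a fallback is to handle $\torf A$ by first showing that $\iota_A\colon A\to\torf A$ is a bornological quotient map and then pushing an \(\alpha\)\nb-invariant bounded submodule of $A$ directly forward to $\torf A$.
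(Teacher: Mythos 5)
Your proposal follows the paper's proof essentially step for step: the same three-way split (\(\ling{A}\) via Lemma~\ref{equivalent_semidagger}, \(\comb{A}\) via the images of the \(\dvgen\)\nb-adic completions \(\coma{T}\) of invariant bounded submodules, \(\torf{A}\) by passing through the uniformly bounded action on \(\dvf\otimes A\) and restricting to the invariant subalgebra), followed by composition through Theorem~\ref{dagger_completion}; if anything you are more explicit than the paper about the surjectivity half of the invariance condition \(\alpha_s(T)=T\). One caveat: your fallback for the \(\torf{A}\) case is not available, because \(\iota_A\colon A\to\torf{A}\) is in general not a bornological quotient map (the preimage of a bounded set involves the torsion submodule of \(A\), which need not be bounded), so you must stick with your main argument via \(T^*\), which is exactly what the paper does.
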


\begin{proof}
  If~\(\alpha\)
  is uniformly bounded, then~\(A\)
  is the bornological inductive limit of its \(\alpha\)\nb-invariant
  bounded \(\dvr\)\nb-submodules.
  The action of~\(\alpha\)
  restricts to any such submodule~\(T\)
  and then extends canonically to its \(\dvgen\)\nb-adic
  completion~\(\coma{T}\).
  Then the image of~\(\coma{T}\)
  in~\(\comb{A}\)
  is \(S\)\nb-invariant
  as well.  This gives enough \(S\)\nb-invariant
  bounded \(\dvr\)\nb-submodules
  in~\(\comb{A}\).
  So the induced action on~\(\comb{A}\) is uniformly bounded.

  If the action~\(\alpha\)
  on~\(A\)
  is uniformly bounded, then so is the action
  \(\mathrm{id}_B\otimes\alpha\)
  on \(B\otimes A\)
  for any bornological algebra~\(B\).
  In particular, the induced action on \(\dvf\otimes A\)
  is uniformly bounded.  Since the canonical map
  \(A\to \dvf\otimes A\)
  is \(S\)\nb-equivariant,
  the image~\(\torf{A}\)
  of~\(A\)
  in \(\dvf\otimes A\)
  is \(S\)\nb-invariant.
  The restriction of the uniformly bounded action of~\(S\)
  on \(\dvf\otimes A\)
  to this invariant subalgebra inherits uniform boundedness.  So the
  induced action on~\(\torf{A}\) is uniformly bounded.

  Any subset of linear growth in~\(A\)
  is contained in \(\sum_{j=0}^\infty \dvgen^j T^{j+1}\)
  for a bounded \(\dvr\)\nb-submodule~\(T\).
  Since~\(\alpha\)
  is uniformly bounded, \(T\)
  is contained in an \(\alpha\)\nb-invariant
  bounded \(\dvr\)\nb-submodule~\(U\).
  Then
  \(\sum_{j=0}^\infty \dvgen^j U^{j+1} \supseteq \sum_{j=0}^\infty
  \dvgen^j T^{j+1}\)
  is \(\alpha\)\nb-invariant
  and has linear growth.  So~\(\alpha\)
  remains uniformly bounded for the linear growth bornology.

  The uniform boundedness of the induced action on the dagger
  completion~\(A^\updagger\)
  follows from the inheritance properties above and
  Theorem~\ref{dagger_completion}.
\end{proof}

\begin{example}
  \label{exa:finite_S_uniformly_bounded}
  Let~\(S\)
  be a finite monoid.  Any bounded action of~\(S\)
  by bornological algebra endomorphisms is uniformly bounded because
  we may take \(T = \sum_{s\in S} \alpha_s(U)\)
  in Definition~ \ref{def:unif_bounded}.
\end{example}

\begin{example}
  \label{affine_action}
  We describe a uniformly bounded action of~\(\Z\)
  on the polynomial algebra \(A\defeq \dvr[x_1,\dotsc,x_n]\)
  with the fine bornology.  So a subset of~\(A\)
  is bounded if and only if it is contained in
  \((\dvr + \dvr x_1 + \dotsb + \dvr x_n)^k\)
  for some \(k \in \N_{\ge 1}\).
  Let \(a\in \mathrm{GL}_n(\dvr) \subseteq \Endo(\dvr^n)\)
  and \(b \in \dvr^n\).  Then
  \[
  \alpha_1\colon \dvr[x_1,\dotsc, x_n] \to
  \dvr[x_1,\dotsc,x_n],\qquad
  (\alpha_1 f)(x) \defeq f(a x + b),
  \]
  is an algebra automorphism~\(\alpha_1\)
  of~\(A\)
  with inverse \((\alpha_1^{-1} f)(x) \defeq f(a^{-1}(x - b))\).
  This generates an action of the group~\(\Z\)
  by \(\alpha_n \defeq \alpha_1^n\)
  for \(n\in\Z\).
  If a polynomial~\(f\)
  has degree at most~\(m\),
  then the same is true for \(\alpha_1 f\)
  and \(\alpha_{-1} f\),
  and hence for \(\alpha_n f\)
  for all \(n\in\Z\).
  That is, the \(\dvr\)\nb-submodules
  \((\dvr + \dvr x_1 + \dotsb + \dvr x_n)^k\)
  in~\(A\)
  for \(k\in\N\)
  are \(\alpha\)\nb-invariant.
  So the action~\(\alpha\)
  on~\(A\)
  is uniformly bounded.
  Proposition~\ref{pro:uniformly_bounded_induced_actions} implies that
  the induced action on \(\dvr[x_1,\dotsc,x_n]^\updagger\)
  is uniformly bounded as well.
\end{example}

\begin{proposition}
  \label{pro:dagger_completion_crossed_uniformly_bounded}
  Let~\(S\)
  be a finitely generated monoid with word length function~\(\ell\).
  Let~\(A\)
  be a dagger algebra and let \(\alpha\colon S\to\Endo(A)\)
  be a uniformly bounded action by algebra endomorphisms.  Then
  \((A\rtimes S)^\updagger \subseteq \prod_{s\in S} A\).
  A formal series \(\sum_{s\in S} a_s \delta_s\)
  with \(a_s\in A\)
  for all \(s\in S\)
  belongs to \((A\rtimes S)^\updagger\)
  if and only if there are \(\varepsilon>0\)
  and \(T\in\bdd_A\)
  with \(a_s \in \dvgen^{\floor{\varepsilon \ell(s)}} T\)
  for all \(s\in S\),
  and a set of formal series is bounded in \((A\rtimes S)^\updagger\)
  if and only if \(\varepsilon>0\)
  and \(T\in\bdd_A\) for its elements may be chosen uniformly.
\end{proposition}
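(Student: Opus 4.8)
The plan is to compute $(A\rtimes S)^\updagger$ directly from its
  construction. Since $A\rtimes S=\bigoplus_{s\in S}A$ is a direct sum of
  copies of the bornologically torsion-free module~$A$, it is itself
  bornologically torsion-free, so $(A\rtimes S)^\updagger\cong
  \comling{(A\rtimes S)}$ by Theorem~\ref{dagger_completion}.  By the
  reductions preceding the proposition we may assume that~$S$ is finitely
  generated; fix a finite generating set $F\ni 1$ with associated word
  length~$\ell$ and write $F^k\subseteq S$ for the finite set of words of
  length at most~$k$.  The first real step is to fix a cofinal family of
  \emph{nice} bounded $\dvr$\nb-submodules $U\subseteq A$: those with
  $\alpha_s(U)=U$ for all $s\in S$, with $1\in U$, with
  $\dvgen\cdot U\cdot U\subseteq U$, and with~$U$ being
  $\dvgen$\nb-adically complete.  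Such submodules are cofinal: starting
  from a bounded subset of~$A$, enlarge it to a bounded
  $\alpha$\nb-invariant submodule (uniform boundedness of~$\alpha$),
  complete it $\dvgen$\nb-adically (this preserves $\alpha$\nb-invariance
  because $\dvgen$\nb-adic completion preserves surjectivity of
  each~$\alpha_s$ on the submodule, and~$\alpha$\nb-invariance asks for
  equality), add $\dvr\cdot1$ (harmless, as the~$\alpha_s$ are unital),
  apply $V\mapsto\sum_{i\ge0}\dvgen^i V^{i+1}$ (bounded since~$A$ is
  semi-dagger, still $\alpha$\nb-invariant, and now with
  $\dvgen\,V\,V\subseteq V$ as in Proposition~\ref{pro:semi-dagger_criterion}),
  and complete $\dvgen$\nb-adically once more; the outcome is a nice~$U$
  containing the original set.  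For a nice~$U$ one has the identity
  $\sum_{j\ge m}\dvgen^j U^{j+1}=\dvgen^m U^{m+1}$, immediately from
  $\dvgen\,U\,U\subseteq U$.

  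Next I would identify the linear growth bornology.  By
  Lemma~\ref{equivalent_semidagger} the bounded submodules of
  $\ling{(A\rtimes S)}$ are cofinally the submodules
  \[
  N^{(k)}_U\defeq\sum_{j\ge0}\dvgen^j\Bigl(\bigoplus_{s\in F^k}U\Bigr)^{j+1}
  \]
  for nice~$U$ and $k\ge1$.  Expanding the crossed-product multiplication,
  $(a_1\delta_{r_1})\dotsm(a_{j+1}\delta_{r_{j+1}})$ with $a_i\in U$ and
  $r_i\in F^k$ equals
  $\bigl(a_1\,\alpha_{r_1}(a_2)\dotsm\alpha_{r_1\dotsm r_j}(a_{j+1})\bigr)
  \delta_{r_1\dotsm r_{j+1}}$; $\alpha$\nb-invariance of~$U$ puts the
  coefficient in~$U^{j+1}$, while surjectivity of the maps~$\alpha_r$
  on~$U$ together with $\alpha_1=\mathrm{id}$ and padding words with copies
  of~$1$ shows that every element of $U^{j+1}\delta_t$ with
  $\ell(t)\le k(j+1)$ is attained.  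Combined with the identity above, this
  gives
  \[
  N^{(k)}_U=\bigoplus_{s\in S}\dvgen^{m_k(s)}\,U^{m_k(s)+1}\,\delta_s ,
  \qquad m_k(s)\defeq\bigl(\ceil{\ell(s)/k}-1\bigr)_+ .
  \]
  Passing to $\dvgen$\nb-adic completions, the completion of the
  coefficient module $\dvgen^{m_k(s)}U^{m_k(s)+1}\subseteq U$ is its
  closure in the $\dvgen$\nb-adically complete module~$U\subseteq A$, so
  each $\coma{N^{(k)}_U}$ embeds compatibly into $\prod_{s\in S}A$.  Hence
  by Proposition~\ref{completions_exist} the colimit
  $\varinjlim_{(k,U)}\coma{N^{(k)}_U}$ is already separated and equals
  $(A\rtimes S)^\updagger$, realised as a submodule of $\prod_{s\in S}A$
  consisting of formal series $\sum_s a_s\delta_s$, with the
  inductive-limit bornology.

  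The remaining task is to match this with the growth condition in the
  statement, and this is the step I expect to be the main obstacle:
  because~$U$ is not a subalgebra, the coefficient module
  $\dvgen^{m_k(s)}U^{m_k(s)+1}$ is \emph{not} simply $\dvgen^{m_k(s)}U$,
  and the powers $U^{m+1}$ genuinely grow with~$m$.  The key estimate is
  that, for a nice~$U$ and any $c\in(0,1)$, the set
  $\bigcup_{m\ge0}\dvgen^{\ceil{cm}}U^{m+1}$ is bounded in~$A$; this is a
  consequence of the semi-dagger property of~$A$, since writing $c=p/q$
  one has $\dvgen^{pi}U^{qi+1}=(\dvgen^p U^q)^i\cdot U$ and
  $\sum_i(\dvgen^p U^q)^i$ is bounded by Lemma~\ref{lgb_equivalence}, the
  residues $m\bmod q$ being absorbed into a fixed bounded factor.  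With
  $c=\tfrac12$ this gives
  $\dvgen^{m_k(s)}U^{m_k(s)+1}\subseteq\dvgen^{\floor{m_k(s)/2}}T$ for a
  fixed $T\in\bdd_A$; since $m_k(s)$ grows linearly in~$\ell(s)$, this
  produces $a_s\in\dvgen^{\floor{\varepsilon\ell(s)}}T$ for some
  $\varepsilon>0$ depending on~$k$, uniformly over bounded subsets of
  $\coma{N^{(k)}_U}$.  Conversely, a formal series with
  $a_s\in\dvgen^{\floor{\varepsilon\ell(s)}}T$ lies in $\coma{N^{(k)}_U}$
  once~$U$ is chosen nice with $U\supseteq T$ and $k$ is large enough
  relative to~$\varepsilon$, using $T\subseteq U\subseteq U^{m_k(s)+1}$;
  the finitely many~$s$ of small word length are dealt with by
  enlarging~$T$ and~$k$.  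Carrying out both inclusions uniformly on
  bounded families yields the asserted description of
  $(A\rtimes S)^\updagger$ as a set and as a bornological module.
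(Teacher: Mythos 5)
Your sketch is correct, and it follows the same overall architecture as the paper's proof: note that $A\rtimes S$ inherits bornological torsion-freeness from~$A$, identify the linear growth bornology on $A\rtimes S$ with the bornology of the statement before completing, and then complete. The middle step is organised genuinely differently, however. The paper guesses the answer~$\bdd'$ (series with $a_s\in\dvgen^{\floor{\varepsilon\ell(s)}}T$ for fixed $\varepsilon,T$) and sandwiches it: it checks that $\bdd'$ is an algebra bornology containing $\bdd_{A\rtimes S}$ and is semi-dagger -- for which a single $\alpha$\nb-invariant $T_3$ with $\dvgen T_3^2\subseteq T_3$ suffices -- so $\bdd'$ contains the linear growth bornology, and separately that every set in~$\bdd'$ has linear growth. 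You instead compute the cofinal bounded submodules of $\ling{(A\rtimes S)}$ from below as $N^{(k)}_U=\bigoplus_s \dvgen^{m_k(s)}U^{m_k(s)+1}\delta_s$, which forces you to prove an extra estimate (boundedness of $\bigcup_m \dvgen^{\ceil{cm}}U^{m+1}$ for $c\in(0,1)$, via Lemma~\ref{lgb_equivalence}) in order to flatten the growing powers $U^{m+1}$ into a single bounded~$T$; the paper's route avoids this lemma entirely. In compensation, your treatment of the completion step -- using $1\in U$ to get $\dvgen^N U\cap \dvgen^m U^{m+1}\subseteq \dvgen^{N-m}\cdot\dvgen^m U^{m+1}$, hence injectivity of each $\coma{N^{(k)}_U}\to\prod_{s\in S}A$ and separatedness of the colimit -- makes precise what the paper dismisses as ``routine.'' Both arguments rest on the same two inputs: uniform boundedness to produce $\alpha$\nb-invariant bounded submodules, and the semi-dagger property of~$A$ to control products of coefficients.
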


\begin{proof}
  We first describe the linear growth bornology on~\(A\rtimes S\).
  Let~\(\bdd'\)
  be the set of all subsets \(U\subseteq A\rtimes S\)
  for which there are \(T\in\bdd_A\)
  and \(\varepsilon>0\)
  such that any element of~\(U\)
  is of the form \(\sum_{s\in S} a_s \delta_s\)
  with \(a_s \in \dvgen^{\floor{\varepsilon \ell(s)}} T\)
  for all \(s\in S\).
  We claim that~\(\bdd'\)
  is the linear growth bornology on \(A\rtimes S\).
  The inclusion \(\dvr[S] \subseteq A\rtimes S\)
  induces a bounded algebra homomorphism
  \(\ling{\dvr[S]} \to \ling{(A\rtimes S)}\).
  We have already described the linear growth bornology on~\(\dvr[S]\)
  in Section~\ref{sec:dagger_monoid}.  This implies easily that all
  subsets in~\(\bdd'\)
  have linear growth: write
  \(\dvgen^{\floor{\varepsilon \ell(s)}} a'_s \delta_s = a'_s\cdot
  \dvgen^{\floor{\varepsilon \ell(s)}} \delta_s\).
  We claim, conversely, that any subset of \(A\rtimes S\)
  of linear growth is contained in~\(\bdd'\).
  All bounded subsets of~\(A\rtimes S\)
  are contained in~\(\bdd'\).
  It is routine to show that~\(\bdd'\)
  is a \(\dvr\)\nb-algebra
  bornology.  We only prove that the bornology~\(\bdd'\)
  has linear growth.  Since~\(\alpha\)
  is uniformly bounded, any \(T\in\bdd_A\)
  is contained in a bounded, \(\alpha\)\nb-invariant
  \(\dvr\)\nb-submodule~\(T_2\).
  Then \(T_3 \defeq \sum_{j=0}^\infty \dvgen^j T_2^{j+1}\)
  is a bounded, \(\alpha\)\nb-invariant
  \(\dvr\)\nb-submodule
  with \(\dvgen \cdot T_3^2 \subseteq T_3\)
  and \(T \subseteq T_3\)
  (see
  \cite{Cortinas-Cuntz-Meyer-Tamme:Nonarchimedean}*{Equation~(5)}).  If
  \(a_s \in \dvgen^{\floor{\varepsilon \ell(s)}} T_3\)
  and \(a_t \in \dvgen^{\floor{\varepsilon \ell(t)}} T_3\), then
  \[
  \dvgen^2\cdot a_s\cdot \alpha_t
  \in \dvgen^{2+\floor{\varepsilon \ell(s)} +\floor{\varepsilon \ell(t)}} T_3^2
  \subseteq \dvgen^{\floor{\varepsilon \ell(s\cdot t)}} \dvgen T_3^2
  \subseteq \dvgen^{\floor{\varepsilon \ell(s\cdot t)}} T_3
  \]
  because
  \(1+\floor{\varepsilon \ell(s)} +\floor{\varepsilon \ell(t)} \ge
  \floor{\varepsilon \ell(s\cdot t)}\).  This implies
  \[
  \dvgen^2 \cdot \sum_{s\in S} \dvgen^{\floor{\varepsilon \ell(s)}} T_3 \delta_s *
  \sum_{t\in S} \dvgen^{\floor{\varepsilon \ell(t)}} T_3 \delta_t
  \subseteq
  \sum_{s,t\in S} \dvgen^{\floor{\varepsilon \ell(s\cdot t)}} T_3 \delta_{s t}
  \subseteq
  \sum_{s\in S} \dvgen^{\floor{\varepsilon \ell(s)}} T_3 \delta_s.
  \]
  So any subset in~\(\bdd'\)
  is contained in \(U\in \bdd'\)
  with \(\dvgen^2 \cdot U^2 \subseteq U\).
  By induction, this implies \((\dvgen^2 U)^k \cdot U \subseteq U\)
  for all \(k\in\N\).
  Hence \(\sum_{j=0}^\infty \dvgen^{2 k} U^{k+1}\)
  is in~\(\bdd'\).
  Now Lemma~\ref{equivalent_semidagger} shows that the
  bornology~\(\bdd'\)
  is semi-dagger.  This proves the claim that~\(\bdd'\)
  is the linear growth bornology on~\(A\rtimes S\).

  Since~\(A\)
  as a dagger algebra is bornologically torsion-free, so is
  \(A\rtimes S\).
  So \((A\rtimes S)^\updagger\)
  is the completion of \(\ling{(A\rtimes S)} = (A\rtimes S,\bdd')\).
  It is routine to identify this completion with the bornological
  \(\dvr\)\nb-module
  described in the statement.
\end{proof}

Propositions \ref{pro:uniformly_bounded_induced_actions}
and~\ref{pro:dagger_completion_crossed_uniformly_bounded} describe the
dagger completion of \(A\rtimes S\)
for a uniformly bounded action of~\(S\)
on~\(A\)
even if~\(A\)
is not a dagger algebra.  Namely, the universal properties of the
crossed product and the dagger completion imply
\begin{equation}
  \label{eq:dagger_complete_twice}
  (A\rtimes S)^\updagger \cong (A^\updagger \rtimes S)^\updagger.
\end{equation}

\begin{example}
  \label{exa:dagger_crossed_polynomial}
  Let~\(\alpha\)
  be the uniformly bounded action of~\(\Z\)
  on \(\dvr[x_1,\dotsc,x_k]\)
  from Example~\ref{affine_action}.  The induced
  action~\(\alpha^\updagger\)
  on \(\dvr[x_1,\dotsc,x_k]^\updagger\)
  is also uniformly bounded by
  Proposition~\ref{pro:uniformly_bounded_induced_actions}.
  And~\eqref{eq:dagger_complete_twice} implies
  \[
  (\dvr[x_1,\dotsc,x_k] \rtimes_\alpha \Z)^\updagger
  \cong (\dvr[x_1,\dotsc,x_k]^\updagger \rtimes_{\alpha^\updagger} \Z)^\updagger.
  \]
  The latter is described in
  Proposition~\ref{pro:dagger_completion_crossed_uniformly_bounded}.
  Namely,
  \((\dvr[x_1,\dotsc,x_k]^\updagger \rtimes_{\alpha^\updagger}
  \Z)^\updagger\)
  consists of those formal series \(\sum_{n\in \Z} a_n \delta_n\)
  with \(a_n \in \dvr[x_1,\dotsc,x_k]^\updagger\)
  for which there are \(\varepsilon>0\)
  and a bounded \(\dvr\)\nb-submodule~\(T\)
  in \(\dvr[x_1,\dotsc, x_k]^\updagger\)
  such that \(a_n \in \dvgen^{\floor{\varepsilon \abs{n}}} T\)
  for all \(n\in \Z\);
  notice that~\(\abs{n}\)
  is indeed a length function on~\(\Z\).
  And a subset is bounded if some pair \(\varepsilon,T\)
  works for all its elements.

  We combine this with the description of bounded subsets of
  \(\dvr[x_1,\dotsc, x_k]^\updagger\)
  in Section~\ref{sec:dagger_monoid}: there is some \(\delta>0\)
  so that a formal power series \(\sum_{m\in \N^k} b_m x^m\)
  belongs to~\(T\)
  if and only if \(b_m \in \dvgen^{\floor{\delta \abs{m}}} \dvr\)
  for all \(m\in\N^k\).
  Here we use the length function
  \(\abs{(m_1,\dotsc,m_k)} = \sum_{j=1}^k m_j\).
  We may merge the parameters \(\varepsilon,\delta>0\)
  above, taking their minimum.  So
  \((\dvr[x_1,\dotsc,x_k]\rtimes \Z)^\updagger\)
  consists of the formal series
  \(\sum_{n\in\Z,m\in\N^k} a_{n,m} x^m \delta_n\)
  with
  \(a_{n,m} \in\dvgen^{\floor{\varepsilon (\abs{n} + \abs{m})}} \dvr\)
  or, equivalently,
  \(\nu(a_{n,m}) +1 > \varepsilon (\abs{n} + \abs{m})\)
  for all \(n\in\Z\), \(m\in\N^k\).
\end{example}

If the action of~\(S\)
on~\(A\)
is not uniformly bounded, then the linear growth bornology
on~\(A\rtimes S\)
becomes much more complicated.  It seems unclear whether the
description below helps much in practice.  Let \(F\subseteq S\)
be a finite generating subset containing~\(1\).
Any bounded subset of \(A\rtimes S\)
is contained in \(\bigl(\sum_{s\in F} T\cdot\delta_s\bigr)^N\)
for some \(N\in\N\)
and some \(T\in\bdd_A\)
with \(1\in T\).
Therefore, a subset of~\(A\rtimes S\)
has linear growth if and only if it is contained in the
\(\dvr\)\nb-submodule generated by
\[
\bigcup_{n=1}^\infty \dvgen^{\floor{\varepsilon n}}(T\cdot \setgiven{\delta_s}{s\in F})^n
\]
for some \(\varepsilon>0\),
\(T\in\bdd_A\).
Using the definition of the convolution, we may rewrite the latter set
as
\[
\bigcup_{n=1}^\infty \bigcup_{s_1,\dotsc,s_n\in F} \dvgen^{\floor{\varepsilon n}} \cdot
T\cdot \alpha_{s_1}(T)\cdot \alpha_{s_1 s_2}(T) \dotsm \alpha_{s_1 \dotsm s_{n-1}}(T) \,
\delta_{s_1\dotsm s_n}.
\]
The resulting \(\dvr\)\nb-module
is the sum \(\sum_{s\in S} U_s \delta_s\),
where~\(U_s\)
is the \(\dvr\)\nb-submodule
of~\(A\)
generated by finite products
\[
\setgiven{\dvgen^{\floor{\varepsilon n}}
\cdot T\cdot \alpha_{s_1}(T) \dotsm \alpha_{s_1\dotsm s_{n-1}}(T)}
{n\in\N_{\ge1},\ s_1,\dotsc,s_n\in F,\ s_1\dotsm s_n = s}.
\]
Here taking a factor \(1 \in T\)
is allowed.  Thus we may leave out a factor
\(\alpha_{s_1\dotsm s_i}(T)\).
This has the same effect as increasing~\(n\)
by~\(1\)
and putting \(s_i = s_i^1\cdot s_i^2\)
with \(s_i^1,s_i^2\in F\).
Since~\(F\)
generates~\(S\)
as a monoid, we may allow arbitrary \(s_i\in S\)
when we change the exponent of~\(\dvgen\)
appropriately.  Namely, we must then replace~\(n\)
in the exponent of~\(\dvgen\)
by the number of factors in~\(F\)
that are needed to produce the desired elements~\(s_i\),
which is \(\ell_{\ge1}(s_1) + \dotsb + \ell_{\ge1}(s_n)\),
where \(\ell_{\ge1}(1) = 1\)
and \(\ell_{\ge1}(s) = \ell(s)\)
for \(s\in S\setminus\{1\}\).
As a result, \(U_s\)
is the \(\dvr\)\nb-submodule of~\(A\) generated by
\begin{multline*}
  \dvgen^{\floor{\varepsilon (\ell_{\ge1}(s_1) + \dotsb +
      \ell_{\ge1}(s_n))}} \cdot x_0\cdot \alpha_{s_1}(x_1) \dotsm
  \alpha_{s_1\dotsm s_{n-1}}(x_{n-1}),\\
  n\in\N_{\ge1},\ x_0,\dotsc,x_{n-1}\in T,\ s_1,\dotsc,s_n\in S,\ s_1\dotsm s_n = s.
\end{multline*}

Now assume that~\(S\)
is a group, not just a monoid.  Then any sequence of elements
\(g_1,\dotsc,g_n\in S\)
may be written as \(g_i = s_1\dotsm s_i\)
by putting \(s_i\defeq g_{i-1}^{-1} g_i\)
with \(g_0\defeq 1\).
So~\(U_g\) is the \(\dvr\)\nb-submodule of~\(A\) generated by
\begin{multline*}
  \dvgen^{\floor{\varepsilon (\ell_{\ge1}(g_0^{-1} g_1) + \ell_{\ge1}(g_1^{-1} g_2)
      + \dotsb + \ell_{\ge1}(g_{n-1}^{-1} g_n))}}
  \cdot \alpha_{g_0}(x_0)\cdot \alpha_{g_1}(x_1) \dotsm \alpha_{g_{n-1}}(x_{n-1}),\\
  n\in\N_{\ge1},\ x_0,\dotsc,x_{n-1} \in T,\ g_0,\dotsc,g_n\in S,\ g_0=1,\ g_n=g.
\end{multline*}
These subsets~\(U_g\)
for fixed \(T\)
and~\(\varepsilon\)
depend on~\(g\)
in a complicated way.  The bornology on~\(A\rtimes G\)
generated by these subsets is, however, also generated by the sets of
the form
\(\sum_{g\in G} \dvgen^{\varepsilon \ell(g)}\cdot U\,\delta_g\),
where~\(U\) is the \(\dvr\)\nb-submodule of~\(A\) generated by
\begin{multline*}
  \dvgen^{\floor{\varepsilon (\ell_{\ge1}(g_0^{-1} g_1) + \ell_{\ge1}(g_1^{-1} g_2)
      + \dotsb + \ell_{\ge1}(g_{n-2}^{-1} g_{n-1}))}}
  \cdot \alpha_{g_0}(x_0)\cdot \alpha_{g_1}(x_1) \dotsm \alpha_{g_{n-1}}(x_{n-1}),\\
  n\in\N_{\ge1},\ x_0,\dotsc,x_{n-1} \in T,\ g_0,\dotsc,g_{n-1}\in S,\ g_0=1,
\end{multline*}
for some \(T\in\bdd_A\), \(\varepsilon>0\).
The reason is that
\[
\ell(g_n) - \ell(g_{n-1}) \le \ell(g_{n-1}^{-1} g_n) \le
\ell(g_{n-1}) + \ell(g_n)
\]
and \(\ell(g_{n-1}) \le \sum_{j=1}^{n-1} \ell(g_{j-1}^{-1} g_j)\).
Therefore, replacing the exponents of~\(\dvgen\)
as above does not change the bornology on \(A\rtimes G\)
that is generated by the sets above when \(\varepsilon>0\) varies.

\begin{bibdiv}
  \begin{biblist}
    \bibselect{references}
  \end{biblist}
\end{bibdiv}
\end{document}